\renewcommand{\epsilon}{\varepsilon}
\newcommand{\Z}{\mathbb{Z}}
\newcommand{\R}{\mathbb{R}}
\newcommand{\C}{\mathbb{C}}
\renewcommand{\P}{\mathbb{P}}
\newcounter{mtheorem}
\newtheorem{mtheorem}[mtheorem]{Theorem}
\renewcommand{\P}{\mathbb{P}}
\newcommand{{\vol}}{\rm vol}
\newcommand{\p}{\partial}
\newcommand{\norm}[1]{\Vert #1 \Vert}
\newcommand{\Ric}{\operatorname{Ric}}
\def\RR{\operatorname{R}}
\providecommand{\norm}[1]{\lVert#1\rVert}
\def \Cstarn{(\mathbb{C}^*)^n}
\def \Cstarsqr{(\mathbb{C}^*)^2}
\def \Cstar{\mathbb{C}^*}
\def \t {\mathfrak{t}}
\def \bp {\bar{\partial}}
\def\Ric{\operatorname{Ric}}
\def\RR{\operatorname{R}}
\def\vol{\operatorname{vol}}
\newtheoremstyle{fancy}{}{}{\itshape}{}{\textbf\bgroup}{.\egroup}{ }{}
\newtheoremstyle{fancy2}{}{}{\rm}{}{\textbf\bgroup}{.\egroup}{ }{}
\theoremstyle{fancy}
\newtheorem{theorem}{Theorem}[section]
\newtheorem{lemma}[theorem]{Lemma}
\newtheorem{corollary}[theorem]{Corollary}
\newtheorem{prop}[theorem]{Proposition}
\newtheorem{conj}[theorem]{Conjecture}
\theoremstyle{fancy2}
\newtheorem{definition}[theorem]{Definition}
\newtheorem{example}[theorem]{Example}
\newtheorem{claim}[theorem]{Claim}
\setlist{leftmargin=*}
\numberwithin{equation}{section}
\begin{document}
\title{On finite time Type I singularities of the K\"ahler-Ricci flow on compact K\"ahler surfaces}
\date{\today}

\author{Charles Cifarelli}
\address{Laboratoire de Math\'ematiques Jean Leray (UMR 6629), Universit\'e de Nantes, CNRS,
2, rue de la Houssini\`ere, B.P.~92208, 44322 Nantes Cedex~3, France}
\email{Charles.Cifarelli@univ-nantes.fr}
\author{Ronan J.~Conlon}
\address{Department of Mathematical Sciences, The University of Texas at Dallas, Richardson, TX 75080}
\email{ronan.conlon@utdallas.edu}
\author{Alix Deruelle}
\address{Sorbonne Universit\'e and Universit\'e de Paris, CNRS, IMJ-PRG, F-75005 Paris, France}
\email{alix.deruelle@imj-prg.fr}

\date{\today}

\begin{abstract}
We show that the underlying complex manifold of a complete non-compact two-\linebreak dimensional shrinking gradient K\"ahler-Ricci soliton $(M,\,g,\,X)$ with soliton metric $g$ with
bounded scalar curvature $\RR_{g}$ whose soliton vector field $X$ has an integral curve along which $\RR_{g}\not\to0$
is biholomorphic to either $\mathbb{C}\times\mathbb{P}^{1}$ or to the blowup of this manifold at one point. Assuming the existence of such a soliton on
this latter manifold, we show that it is toric and unique. We also identify the corresponding soliton vector field. Given these possibilities, we then prove a strong form of the Feldman-Ilmanen-Knopf conjecture for finite time Type I singularities of the K\"ahler-Ricci flow on compact K\"ahler surfaces, leading to a classification of the bubbles
of such singularities in this dimension.
\end{abstract}

\maketitle

\markboth{Charles Cifarelli, Ronan J.~Conlon, and Alix Deruelle}{On finite time Type I singularities of the K\"ahler-Ricci flow on compact K\"ahler surfaces}

\section{Introduction}

\subsection{Overview}

 A \emph{Ricci soliton} is a triple $(M,\,g,\,X)$, where $M$ is a Riemannian manifold endowed with a complete Riemannian metric $g$
and a complete vector field $X$, such that
\begin{equation}\label{soliton111}
\Ric(g)+\frac{1}{2}\mathcal{L}_{X}g=\frac{\lambda}{2}g
\end{equation}
for some $\lambda\in\{-1,\,0,\,1\}$. If $X=\nabla^{g} f$ for some smooth real-valued function $f$ on $M$,
then we say that $(M,\,g,\,X)$ is \emph{gradient}. In this case, the soliton equation \eqref{soliton111}
becomes $$\Ric(g)+\frac{\lambda}{2} g=\operatorname{Hess}(f).$$

If $g$ is complete and K\"ahler with K\"ahler form $\omega$, then we say that $(M,\,g,\,X)$ is a \emph{K\"ahler-Ricci soliton} if
the vector field $X$ is complete and real holomorphic and the pair $(g,\,X)$ satisfies the equation
 \begin{equation}\label{soliton13}
\Ric(g)+\frac{1}{2}\mathcal{L}_{X}g=\lambda g
\end{equation}
for $\lambda$ as above. If $g$ is a K\"ahler-Ricci soliton and if $X=\nabla^{g} f$ for some smooth real-valued function $f$ on $M$,
then we say that $(M,\,g,\,X)$ is \emph{gradient}. In this case, the soliton equation \eqref{soliton13} may be rewritten as
\begin{equation*}
\rho_{\omega}+i\partial\bar{\partial}f=\lambda\omega,
\end{equation*}
where $\rho_{\omega}$ is the Ricci form of $\omega$.

For Ricci solitons and K\"ahler-Ricci solitons $(M,\,g,\,X)$, the vector field $X$ is called the
\emph{soliton vector field}. Its completeness is guaranteed by the completeness of $g$
\cite{Zhang-Com-Ricci-Sol}. If the soliton is gradient, then
the smooth real-valued function $f$ satisfying $X=\nabla^g f$ is called the \emph{soliton potential}. It is unique up to addition of a constant.
Finally, a Ricci soliton and a K\"ahler-Ricci soliton are called \emph{steady} if $\lambda=0$, \emph{expanding}
if $\lambda=-1$, and \emph{shrinking} if $\lambda=1$ in \eqref{soliton111} and \eqref{soliton13}, respectively.

The study of Ricci solitons and their classification is important in the context of Riemannian geometry. For example, they provide a
natural generalisation of Einstein manifolds and on certain Fano manifolds, shrinking K\"ahler-Ricci solitons are known to exist where
there are obstructions to the existence of a K\"ahler-Einstein metric \cite{zhuu}. Also, to each soliton, one may associate a self-similar solution of the Ricci flow
\cite[Lemma 2.4]{Chowchow}. These are candidates for singularity models of the flow. The difference in
normalisations between \eqref{soliton111} and \eqref{soliton13} reflects the difference between the constants preceding the Ricci term
in the Ricci flow and in the K\"ahler-Ricci flow respectively when one takes this dynamic point of view.

In this article we are concerned with the classification of complete shrinking gradient K\"ahler-Ricci solitons with
bounded curvature, the motivation being that such a soliton encodes how the K\"ahler-Ricci flow enters a finite time Type I singularity, that is, a singularity
where the curvature of the evolving metric doesn't blow up faster than $O((T-t)^{-1})$ at the finite singular time $T>0$. More precisely, non-flat shrinking gradient K\"ahler-Ricci solitons are known to appear as parabolic rescalings of finite time Type I singularities of the K\"ahler-Ricci flow on compact K\"ahler manifolds \cite{topping, naber}. We focus on the classification in complex dimension $2$, where a bound on the scalar curvature of the soliton suffices to bound the full curvature tensor \cite{wang22}. Assuming therefore bounded scalar curvature,
the soliton is either compact, in which case the underlying manifold is Fano and the resulting soliton is (up to automorphism) K\"ahler-Einstein or the shrinking gradient K\"ahler-Ricci soliton given by \cite{soliton} depending on the Fano manifold in question, or is non-compact. Gradient shrinking K\"ahler-Ricci solitons are connected at infinity \cite{munteanu}
and in this latter case, there is a dichotomy in the sense that the scalar curvature of the soliton either tends to zero along every integral curve of $X$, or
$X$ has an integral curve along which the scalar curvature does not tend to zero.
In the former case, it follows that the scalar curvature tends to zero globally (cf.~Lemma \ref{batman}) and hence
the soliton (up to automorphism) is either that of Feldman-Ilmanen-Knopf \cite{FIK} on the blowup of $\mathbb{C}^{2}$ at one point or the flat Gaussian shrinking soliton on $\mathbb{C}^{2}$ \cite{cds}. Here we use a result of \cite{naber} to prove, in conjunction with \cite{charlie}, that in the latter case the shrinking soliton is isometric to the cylinder $\mathbb{C}\times\mathbb{P}^{1}$ or to a new hypothetical toric shrinking gradient K\"ahler-Ricci soliton on the blowup of this latter manifold at one point. Being the only possibilities, this allows us to prove a strong form of the Feldman-Ilmanen-Knopf conjecture \cite{FIK} for finite time Type I singularities of the K\"ahler-Ricci flow on compact K\"ahler surfaces, and in doing so, identify the possible parabolic rescalings that may appear at such singularities.

\subsection{Main results}

The simplest examples of complete shrinking gradient K\"ahler-Ricci solitons include any K\"ahler-Einstein manifold with soliton vector field
$X=0$ and the flat Gaussian shrinking soliton on $\mathbb{C}$ endowed with soliton vector field $2\cdot\operatorname{Re}(z\partial_{z})$, $z$ here the holomorphic coordinate on $\mathbb{C}$.
Taking Cartesian products also provides examples. With this in mind, our first main result can be stated as follows. The statement should be read in the context of the dichotomy explained above.
\begin{mtheorem}[Holomorphic classification]\label{mainthm1}
Let $(M,\,g,\,X)$ be a two-dimensional complete non-compact shrinking gradient K\"ahler-Ricci soliton with complex structure $J$ and with
bounded scalar curvature $\RR_{g}$ whose soliton vector field $X$ has an integral curve along which $\RR_{g}\not\to0$. Then:
\begin{enumerate}
\item $M$ is biholomorphic to either $\mathbb{C}\times\mathbb{P}^{1}$ or to $\operatorname{Bl}_{p}(\C \times \P^1)$, that is, the blowup of $\mathbb{C}\times\mathbb{P}^{1}$ at a fixed point $p$ of the standard torus action on $\mathbb{C}\times\mathbb{P}^{1}$.
\item There exists a biholomorphism $\gamma:M\to M$ such that $\gamma^{-1}_{*}(JX)$ lies in the Lie algebra of the
real torus $\mathbb{T}$ acting on these spaces in the standard way and $\gamma^{*}g$ is $\mathbb{T}$-invariant.
\item $\gamma^{-1}_{*}(JX)$ is determined and its flow generates a holomorphic isometric $S^{1}$-action of $(M,\,J,\,\gamma^{*}g)$.
\item Assuming existence, $\gamma^{*}g$ is the unique $\mathbb{T}$-invariant complete shrinking gradient K\"ahler-Ricci soliton on $M$.
\end{enumerate}
\end{mtheorem}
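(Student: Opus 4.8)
The plan is to exploit the asymptotic structure forced by the non-decay of $\RR_{g}$ together with the holomorphicity of $JX$. Since $(M,\,g,\,X)$ is a complete non-compact shrinking gradient K\"ahler-Ricci soliton, the potential $f$ with $\nabla^{g}f=X$ is proper, the integral curves of $X$ exhaust $M$ towards infinity, and bounded $\RR_{g}$ upgrades to bounded full curvature by \cite{wang22}, so that smooth pointed limits at infinity exist. First I would analyse the end: along the integral curve where $\RR_{g}\not\to0$ the geometry cannot open up conically, so applying \cite{naber} to the pointed limit along points escaping to infinity produces a tangent soliton that splits off a Euclidean factor. In real dimension four this forces the asymptotic model to be a finite quotient of the round cylinder $\C\times\P^{1}$ (that is, $\R^{2}\times\mathbb{S}^{2}$ with its product shrinking structure), so $M$ carries a cylindrical end asymptotic to $(\C\times\P^{1})/\Gamma$, with the integral curves of $X$ running out this end and $JX$ asymptotic to rotation in the $\C$-direction.

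With the end identified, I would turn to the holomorphic classification (i). Because $JX=J\nabla^{g}f$ is a real holomorphic Killing field, its flow is a one-parameter group of biholomorphic isometries; combining this with the cylindrical structure, and invoking \cite{charlie}, I would pin down the complex geometry near infinity, showing $M$ is biholomorphic outside a compact set to the complement of a compact set in $\C\times\P^{1}$ and in particular that $\Gamma$ is trivial. Viewing $M$ as a K\"ahler surface with a $\P^{1}$-fibration at infinity, I would then run the classification of such ruled surfaces via minimal model theory: the only fillings compatible with the holomorphic torus action and the soliton structure are $\C\times\P^{1}$ itself and its blowup at a single fixed point of the torus action, $\operatorname{Bl}_{p}(\C\times\P^{1})$. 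Excluding other blowups and higher modifications uses that $JX$ must extend holomorphically with fixed-point set constrained by the asymptotic weights.

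For (ii) and (iii) I would use the symmetry enhancement coming from the round $\P^{1}$-factor at infinity: the identity component of the isometry group of $(M,\,g)$ contains a maximal torus of rank two, so after conjugating by a biholomorphism $\gamma$ one arranges $\gamma^{-1}_{*}(JX)$ to lie in the Lie algebra $\mathfrak{t}$ of the standard torus $\mathbb{T}$ with $\gamma^{*}g$ being $\mathbb{T}$-invariant, establishing toricity. The element $\gamma^{-1}_{*}(JX)\in\mathfrak{t}$ is then \emph{determined}: writing it in the basis given by rotation in the $\C$-factor and rotation in the $\P^{1}$-factor, its components are fixed by matching the shrinking rate $\lambda=1$ against the asymptotic cylinder together with the Hamiltonian/fixed-point data of the $\mathbb{T}$-action (equivalently a Futaki-type computation). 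That the resulting generator has closed orbits, giving a holomorphic isometric $S^{1}$-action, I would deduce from the rationality of the rotation numbers at the fixed points on the $\P^{1}$ at infinity.

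Finally, for the uniqueness statement (iv), I would reduce the $\mathbb{T}$-invariant soliton equation to a scalar problem on the moment-map image using the Guillemin--Abreu symplectic-potential formalism. On $\C\times\P^{1}$ the product soliton is explicit, while on $\operatorname{Bl}_{p}(\C\times\P^{1})$ the soliton equation becomes a convex problem for the symplectic potential with boundary behaviour prescribed by the fixed asymptotic cylinder and the determined soliton vector field; uniqueness then follows from strict convexity of the associated functional, as in the Wang--Zhu argument adapted to the non-compact toric setting. I expect the main obstacle to lie not in (iv), which is a convexity argument once the data are fixed, but in the passage from the Riemannian non-decay of $\RR_{g}$ to the exact cylindrical model: eliminating the quotient $\Gamma$ and converting the asymptotic data into a precise biholomorphism type while ruling out spurious modifications of $\C\times\P^{1}$ is where the real work will be, and where the input of \cite{naber} and \cite{charlie} must be combined most carefully.
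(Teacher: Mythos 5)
Your proposal founders at its first step: Naber's result, applied along the sequence $x_i$ on the distinguished integral curve, yields only \emph{subsequential pointed Cheeger--Gromov limits} on large balls centred at the $x_i$; it does not give $M$ a globally cylindrical end, and in fact that stronger claim is \emph{false} in one of the two target cases --- the conjectural soliton on $\operatorname{Bl}_{p}(\C\times\P^1)$ is expected to be asymptotic to $\frac{i}{2}\partial\bar{\partial}|z|^{1/\lambda}+2\omega_{\P^{1}}$, a two-dimensional cone of angle $\neq 2\pi$ times $\P^1$, not to the round cylinder, even though its pointed limits along the integral curve are cylindrical. The paper uses the local closeness only to transplant a single embedded $J$-holomorphic sphere of zero self-intersection far out in $M$ (the implicit function theorem for $J$-holomorphic curves, Corollary \ref{curve}), and then must work to globalise: it sweeps that sphere by the $\C^{*}$-action generated by a suitable element $Y\in\Lambda_{\omega}$ of the torus closure of the $JX$-flow (Claims \ref{bollox}--\ref{free} and the map \eqref{mapp}) to trivialise the end as $\C^{*}\times\P^{1}$, blows down the finitely many $(-1)$-curves (finite topological type, \cite{fang}), extends the $\P^1$-fibration into the interior of $M_{\min}$ via Gromov compactness together with positivity of $-K_{M}$ on curves and adjunction (Claim \ref{continuity}), and only then compactifies and classifies. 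Your appeal to ``\cite{charlie} plus minimal model theory'' supplies no mechanism for producing the fibration in the interior, and your exclusion of extra blowups by ``asymptotic weights'' is not the operative constraint: the paper limits $M$ to a single blow-up of $M_{\min}$ at one point of $L_{0}$ because the shrinker condition forces $-K_{M}\cdot C>0$ for every curve $C$, so by adjunction no $(-k)$-curve with $k\geq 2$ can exist, combined with the two-fixed-point theorem for isometric circle actions on $S^{2}$ \cite{koby}. Likewise, \cite{charlie} does not kill the quotient $\Gamma$: the paper discards the $\Z_{2}$-cylinder limit because it would contain $\mathbb{R}P^{2}$ as a complex submanifold, and handles $\pi_{1}(M)$ (finite by \cite{wyliee}) by showing every finite group of biholomorphisms of $\widehat{M}$ or $\widetilde{M}$ has a fixed point, so a free quotient is impossible.

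Parts (ii)--(iii) have parallel problems. The rank-two torus of isometries is not a ``symmetry enhancement from the round $\P^{1}$ at infinity'': it comes from the soliton version of Matsushima's theorem (\cite[Corollary 5.13]{cds}, where bounded scalar curvature is crucial --- $G_{0}^{X}$ is a \emph{maximal compact} subgroup of $\operatorname{Aut}_{0}^{X}(M)$) together with Iwasawa conjugacy of maximal tori; and before any conjugation one needs the $T$-equivariant biholomorphism of Proposition \ref{holoo}, whose proof (Morse--Bott analysis of $f$ \`a la Frankel, the structure of $M_{0}(X)$ in Claims \ref{spring}--\ref{summer}, and the construction of a commuting auxiliary $\C^{*}$-action when $\dim_{\R}T=1$) your outline does not replace. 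More seriously, determining $\gamma^{-1}_{*}(JX)$ by ``matching the shrinking rate against the asymptotic cylinder'' cannot succeed: on the blowup the vector field \eqref{vfield} is $\lambda(2r_{1}\partial_{r_{1}}+r_{2}\partial_{r_{2}})$ with $\lambda\approx 0.64$ \emph{transcendental} (the root of $2e^{t}(1-t)=2-t$), so no asymptotic or rationality matching pins it down; it is identified as the unique critical point of the weighted volume functional on $P_{-K_{M}}$ (Theorem \ref{thmB13}), with the polytope symmetry ($F_{2}\equiv 0$) confining the critical point to the ray $\R_{>0}(2,1)$, and the $S^{1}$-action in (iii) follows from the rational $2{:}1$ ratio of the components, not from rotation numbers at fixed points at infinity. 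Your part (iv) --- Guillemin--Abreu reduction and convexity in the non-compact toric setting --- is essentially what \cite[Theorem A]{charlie} provides and is the one step of your plan that matches the paper once toricity is in hand.
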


Conclusions (ii)--(iv) for $M=\mathbb{C}\times\mathbb{P}^{1}$ have already been established in \cite{charlie} where it is shown that
any complete shrinking gradient K\"ahler-Ricci soliton with bounded scalar curvature on this manifold is isometric to
the Cartesian product of the flat Gaussian soliton $\omega_{\mathbb{C}}$ on $\mathbb{C}$ and twice the Fubini-Study metric $\omega_{\mathbb{P}^{1}}$ on $\mathbb{P}^{1}$. The new possibility arising is when $M$ is the blowup of $\mathbb{C}\times\mathbb{P}^{1}$ at one point, in which case $\gamma^{-1}_{*}(JX)$ is given by \eqref{vfield}. In light of this, we make the following conjecture.

\begin{conj}\label{conjecture}
There exists a complete shrinking gradient K\"ahler-Ricci soliton $\omega$ on $\operatorname{Bl}_{p}(\C \times \P^1)$, that is, the blowup of $\mathbb{C}\times\mathbb{P}^{1}$ at a fixed point $p$ of the standard torus action on $\mathbb{C}\times\mathbb{P}^{1}$, invariant under the real torus action induced by the standard real torus action on $\mathbb{C}\times\mathbb{P}^{1}$,
with bounded scalar curvature and with soliton vector field given by \eqref{vfield}. Moreover, there exists a biholomorphism $\Phi$ of $\operatorname{Bl}_{p}(\C \times \P^1)$ such that $\Phi^*\omega$ converges to $\frac{i}{2}\partial\bar{\partial}|z|^{\frac{1}{\lambda}}+2\omega_{\mathbb{P}^{1}}$ at a polynomial rate with respect to the radial coordinate $z$ on the $\mathbb{C}$ factor. Here, $0<\lambda<1$ is as in \eqref{vfield}.
\end{conj}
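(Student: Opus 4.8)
The statement is an existence-and-asymptotics result, so the plan is to construct the soliton directly as a solution of a scalar equation on the toric surface $M := \operatorname{Bl}_{p}(\C \times \P^{1})$, exploiting the symmetry furnished by Theorem \ref{mainthm1}. First I would record the toric structure: $\C \times \P^{1}$ is a toric surface for the real torus $\mathbb{T}$, and blowing up a $\mathbb{T}$-fixed point keeps it toric, so $M$ carries an effective $\mathbb{T}$-action whose moment polytope $\mathcal{P} \subset \R^{2}$ is an unbounded strip $[0,\infty) \times [0,a]$ with the corner over the blown-up fixed point chamfered by a short edge (the moment image of the exceptional divisor). By part (iii) of Theorem \ref{mainthm1} the field $JX$ lies in the Lie algebra of $\mathbb{T}$ and is already determined, so the soliton equation \eqref{soliton13} for a $\mathbb{T}$-invariant K\"ahler metric reduces, via the Guillemin--Abreu correspondence, to a real Monge--Amp\`ere equation of Wang--Zhu type for a symplectic potential $u$ on $\mathcal{P}$, equivalently to a complex Monge--Amp\`ere equation for a $\mathbb{T}$-invariant K\"ahler potential. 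The target metric $\frac{i}{2}\partial\bar{\partial}|z|^{1/\lambda} + 2\omega_{\P^{1}}$ corresponds to a prescribed linear-plus-conical behaviour of $u$ along the unbounded direction of $\mathcal{P}$, so the problem becomes: solve this Monge--Amp\`ere equation with that prescribed behaviour at the non-compact end.

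Next I would set up the analytic framework adapted to the shrinker. Writing $\omega = \omega_{0} + i\partial\bar{\partial}\varphi$ for a fixed smooth $\mathbb{T}$-invariant background $\omega_{0}$ that equals $\Phi^{*}\big(\frac{i}{2}\partial\bar{\partial}|z|^{1/\lambda} + 2\omega_{\P^{1}}\big)$ outside a compact set (and resolves both the cone tip and the blow-up in the interior), the soliton equation \eqref{soliton13} becomes a Monge--Amp\`ere equation whose linearisation is the drift Laplacian $\Delta_{f}$ associated to the soliton potential $f$. Since $f \sim \tfrac14 r^{2}$ along the conical end, the natural function spaces are weighted H\"older spaces with weight $e^{-f}$ and polynomial decay matching the conjectured rate; on such spaces $\Delta_{f}$ is Fredholm and, crucially, has no kernel in the relevant decay class by the weighted maximum principle. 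I would then run a continuity method, deforming from a solvable endpoint --- for instance interpolating the chamfer size and the cone parameter $\lambda$ so that the degenerate endpoint is the product cylinder $\C \times \P^{1}$, for which the soliton is known by \cite{charlie}. Openness follows from invertibility of $\Delta_{f}$ above; closedness requires uniform a priori estimates: a $C^{0}$ bound from the maximum principle weighted by $e^{-f}$, a $C^{2}$ bound from a Chern--Lu/Aubin--Yau inequality adapted to the non-compact weighted setting, and higher regularity by Schauder, all uniform up to the conical end.

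Having produced a complete $\mathbb{T}$-invariant solution, I would verify the remaining claims: bounded scalar curvature follows because the model end has $\RR_{g} \to 0$ along the $\C$-direction while the curvature stays bounded on the compact core, and in complex dimension two a bound on $\RR_{g}$ bounds the full curvature tensor; uniqueness among $\mathbb{T}$-invariant complete shrinking solitons on $M$ is then exactly part (iv) of Theorem \ref{mainthm1}; and the polynomial convergence rate of $\Phi^{*}\omega$ to the model is read off from the weighted decay estimates. As an alternative to the continuity method, one could instead minimise a weighted Ding--Mabuchi-type functional over potentials with the prescribed asymptotics, in which case the determined vector field $JX$ should satisfy the balancing (barycentre) condition on the weighted polytope $\mathcal{P}$ that makes the associated modified Futaki invariant vanish, and existence would follow from properness of that functional modulo the fixed asymptotic behaviour.

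The main obstacle I anticipate is the a priori estimate --- equivalently the properness --- in this non-compact weighted geometry. For shrinkers the Gaussian weight $e^{-f}$ decays at infinity, so the underlying functional is only coercive modulo the prescribed asymptotics and the equation behaves like a saddle-point rather than an honest minimisation; the weighted Laplacian is not uniformly elliptic up to infinity. The decisive difficulty is to obtain the uniform $C^{0}$ bound and to control the solution in the transition region between the compact core (which carries the exceptional curve and the $\P^{1}$ factor) and the conical end --- precisely the region where $\RR_{g}$ fails to decay --- which demands sharp barrier constructions pinning down both completeness and the exact polynomial (rather than merely sub-exponential) rate. Verifying that the already-determined vector field $JX$ is consistent with the balancing condition on $\mathcal{P}$, so that the construction closes up on the correct manifold with the correct $X$, is the second place where the argument is most likely to be delicate.
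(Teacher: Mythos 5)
The statement you are addressing is not a theorem of the paper at all: it is Conjecture \ref{conjecture}, which the authors explicitly leave open and defer to the forthcoming work \cite{ccd2}. The paper proves only the surrounding scaffolding --- that \emph{if} such a soliton exists it is toric and unique (Theorem \ref{mainthm1}(iv), via \cite[Theorem A]{charlie}), and that its soliton vector field is forced to be \eqref{vfield}, which is computed in Example \ref{vf-id} as the unique critical point of the weighted volume functional on the polytope $P_{-K_M}$. So there is no proof to compare yours against, and your proposal must be judged on its own terms. On those terms it is a sensible research outline but not a proof: every genuinely hard step is named rather than carried out. The reduction to a Wang--Zhu-type real Monge--Amp\`ere equation on the half-strip with chamfered corner reproduces what the paper already establishes (equation \eqref{realMA} and Lemma \ref{note}), and your final paragraph concedes that the uniform $C^{0}$ estimate, the coercivity modulo asymptotics, the barriers pinning down completeness and the exact polynomial rate, and the control of the transition region are all unresolved. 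These are precisely the content of the conjecture; a continuity method whose openness and closedness are asserted but whose a priori estimates are ``anticipated obstacles'' establishes nothing. Moreover your proposed path --- interpolating the chamfer size so the degenerate endpoint is the cylinder $\C\times\P^{1}$ --- is more problematic than you acknowledge: shrinking the chamfer changes the polytope, hence changes the critical point $\lambda$ of the weighted volume functional and therefore the vector field and the cone exponent $1/\lambda$ simultaneously, and the endpoint is a singular degeneration (blowing down the exceptional curve, with the cone exponent tending to that of the Gaussian factor), where openness is not a standard Fredholm statement in your fixed weighted spaces.

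Two concrete errors are worth flagging beyond the missing analysis. First, you assert that on the model end $\RR_{g}\to 0$ along the $\C$-direction; this is false and inverts the whole point of the paper's dichotomy. The model $\frac{i}{2}\partial\bar{\partial}|z|^{1/\lambda}+2\omega_{\P^{1}}$ has a flat conical factor but a fixed-size $\P^{1}$ fibre, so $\RR_{g}$ tends to the positive constant scalar curvature of $2\omega_{\P^{1}}$ at infinity --- indeed, if $\RR_{g}\to 0$ held, then \cite[Theorem E(3)]{cds} (quoted in Section \ref{intro2}) would force the soliton to be the Feldman--Ilmanen--Knopf example or the Gaussian, which do not live on $\operatorname{Bl}_{p}(\C\times\P^{1})$; the conjecture sits precisely in the $\RR_{g}\not\to 0$ branch of Theorem \ref{mainthm1}. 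Second, your closing worry that one must still verify the ``balancing condition'' making $JX$ consistent with the polytope is already settled by the paper: Example \ref{vf-id} shows $F_{2}(t)\equiv 0$ and $F_{1}(\lambda)=0$ by direct computation, and Theorem \ref{thmB13} together with Proposition \ref{properr} guarantees this critical point is the unique admissible soliton vector field, so the vanishing of the weighted barycentre (equivalently the modified Futaki invariant) is an input you may take for granted, not a delicate point of the construction.
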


This conjecture will be explored in the forthcoming \cite{ccd2}. Its resolution, combined with Theorem \ref{mainthm1}, \cite[Corollary C]{charlie}, and \cite[Theorem E(3)]{cds}, would complete the classification of complete shrinking gradient K\"ahler-Ricci solitons with bounded scalar curvature in complex dimension $2$. The scaling factor in the $\mathbb{C}$-direction of the model at infinity is a result of the model metric having to be compatible with the pre-determined soliton vector field. We expect this soliton to model a finite time Type I collapsing singularity of the K\"ahler-Ricci flow in the vicinity of a $(-1)$-curve on a Fano surface with diameter bounded uniformly from below along the flow.

The proof of Theorem \ref{mainthm1} is specifically catered to complex dimension $2$, making heavy use of the theory of $J$-holomorphic curves in this dimension.
The outline is as follows. We assume that the shrinking soliton $(M,\,g,\,X)$ is simply connected as this turns out to suffice. The bounded scalar curvature assumption implies
that the curvature is bounded \cite{wang22} and so by results in \cite{cds}, the flow of $JX$ will generate the holomorphic isometric action of a real torus on the soliton.
Next, we are able to deduce from a result of Naber \cite{naber} that on large balls sufficiently far away from the zero set of the soliton vector field $X$
centred along the integral curve of $X$ along which $\RR_{g}\not\to0$, $(M,\,g)$ is $C^{\infty}$-close to the model cylinder $\mathbb{C}\times\mathbb{P}^{1}$. As the complex structures will consequently also be close, we use the perturbation theory of $J$-holomorphic curves to perturb a holomorphic $\mathbb{P}^{1}$ in the cylinder to a holomorphic $\mathbb{P}^{1}$ with zero self-intersection in $M$ itself. Taking an $S^{1}$ inside the aforementioned real torus generated by $JX$, we can then move this $\mathbb{P}^{1}$ around and identify $M$ with $\mathbb{C}^{*}\times\mathbb{P}^{1}$ at infinity. Complete shrinking solitons with bounded scalar curvature have finite topological type \cite{fang}, therefore we may blow down all of the $(-1)$-curves in $M$ and obtain its minimal model $M_{\min}$. A continuity argument using Gromov's compactness theorem for $J$-holomorphic curves then allows us to extend the $\mathbb{P}^{1}$-foliation of $M$ at infinity into the interior of $M_{\min}$ and in doing so, identify $M_{\min}$ as a $\mathbb{P}^{1}$-bundle over a non-compact Riemann surface $S$. Compactifying this picture, the assumption of simple connectedness allows us to ascertain that $S$ compactifies to an $S^{2}$, leaving us with the diffeomorphism type of $M_{\min}$ as $\mathbb{R}^{2}\times S^{2}$. After analysing the structure of the zero set of $X$, we may then use the flow of the vector fields $X$ and $JX$
to construct a complex torus equivariant biholomorphism between $M_{\min}$ and $\mathbb{C}\times\mathbb{P}^{1}$. $M$ is therefore biholomorphic to either $M_{\min}$ or to the blowup of $M_{\min}$ at finitely many points. The blow-up points of $M_{\min}$ must be contained in the zero set of the vector field that $X$ induces on $M_{\min}$, which itself is contained in a $\mathbb{P}^{1}$. Furthermore, the sign of $-K_{M}$ dictated by the shrinking soliton equation allows $M$ to contain only $(-1)$-curves, ruling out iterative blowups of $M_{\min}$ at a point.
These two properties limit the number of blowup points to one, leading to the statement of Theorem \ref{mainthm1}(i). The biholomorphism constructed between $M$ and
the manifolds of part (i) is torus-equivariant and uses the flow of $X$ and $JX$, hence naturally has the property regarding the vector field
stated in (ii). The toricity of the soliton metric follows from an application of the version of Matsushima's theorem for shrinking gradient K\"ahler-Ricci solitons proved in \cite{cds}. For this step, the assumption of bounded scalar curvature is crucial. Now knowing that $JX$
lies in the Lie algebra of the ambient torus means that it can be identified as it has the property that it minimises a certain functional, known as the
weighted volume functional \cite{cds, Tian-Zhu-II}. In fact, knowing the two possibilities for $M$ allows us to compute this vector field explicitly in each case. This yields (iii). Finally, knowing that the soliton is toric, the uniqueness statement of (iv) is immediate from \cite{charlie}.

\subsubsection{Application to the K\"ahler-Ricci flow}

For a complete shrinking gradient K\"ahler-Ricci soliton $(M,\,g,\,X)$ with $X=\nabla^{g}f$ for $f:M\rightarrow\mathbb{R}$ smooth, one can define an ancient solution $g(t),\,t<0,$ of the K\"ahler-Ricci flow
\begin{equation*}
\frac{\partial g(t)}{\partial t}=-\operatorname{Ric}(g(t))
\end{equation*}
 with $g(-1)=g$ by defining $g(t):=-t\varphi_{t}^{*}g, t<0,$  where $\varphi_{t}$ is a family of diffeomorphisms generated by the gradient vector field $-\frac{1}{t}X$ with $\varphi_{-1}=\operatorname{id}$, i.e.,
\begin{equation*}
\frac{\partial\varphi_{t}}{\partial t}(x)=-\frac{\nabla^g f(\varphi_{t}(x))}{2t},\qquad\varphi_{-1}=\operatorname{id.}
\end{equation*}
These K\"ahler-Ricci flows model the formation of finite time Type I singularities of the flow \cite{naber} which we now define.
We recall the following from \cite{topping} in the context of the K\"ahler-Ricci flow.

A family $(M,\,g(t))$ of smooth complete K\"ahler manifolds satisfying the K\"ahler-Ricci flow
\begin{equation*}
\frac{\partial g(t)}{\partial t}=-\operatorname{Ric}(g(t))
\end{equation*}
on a finite time interval $[0,\,T),\,T<+\infty$, is called a \emph{Type I K\"ahler-Ricci flow} if there exists a
constant $C > 0$ such that for all $t\in[0,\,T)$,
\begin{equation*}
\sup_{M}|\operatorname{Rm}_{g(t)}|_{g(t)}\leq\frac{C}{T-t}.
\end{equation*}
Such a solution is said to develop a \emph{Type I singularity} at time $T$
(and $T$ is called a \emph{Type I singular time}) if it cannot be smoothly extended past time
$T$. It is well known that this is the case if and only if
\begin{equation}\label{(1.3)}
\limsup_{t\to T^{-}}\sup_{M}|\operatorname{Rm}_{g(t)}|_{g(t)}=+\infty;
\end{equation}
see \cite{hamilton} for compact and \cite{shii} for complete flows. Here $\operatorname{Rm}_{g(t)}$ denotes the Riemannian curvature tensor of the metric $g(t)$.

Since Type I K\"ahler-Ricci flows $(M,\,g(t))$ have bounded curvature for each $t\in[0,T)$, the parabolic maximum principle applied to the evolution equation satisfied by $|\operatorname{Rm}|_{g(t)}^{2}$ shows that \eqref{(1.3)} is equivalent to
$$\sup_{M}|\operatorname{Rm}_{g(t)}|_{g(t)}\geq\frac{1}{8(T-t)}\qquad\textrm{for all $t\in[0,\,T)$}.$$
This motivates the following definition.

\begin{definition}[{\cite[Definition 1.2]{topping}}]\label{typeI}
Let $(M,\,g(t))$, $t\in [0,\,T)$, $T<+\infty,$ be a K\"ahler-Ricci flow.
A space-time sequence $(p_{i},\,t_{i})$ with $p_{i}\in M$ and $t_{i}\to T^{-}$
is called an \emph{essential blow-up sequence} if there exists a constant $c>0$ such that
$$|\operatorname{Rm}_{g(t_{i})}|_{g(t_{i})}\geq\frac{c}{T-t_{i}}.$$
A point $p\in M$ in a Type I K\"ahler-Ricci flow is called a \emph{Type I singular point} if there
exists an essential blow-up sequence with $p_{i}\to p$ on $M$. We denote the set of all Type I
singular points by $\Sigma_{I}$.
\end{definition}

The set $\Sigma_{I}$ has been characterised in \cite[Theorem 1.2]{topping}.
As already noted, in general it is known that a suitable blowup limit of a complete K\"ahler-Ricci flow at a point of $\Sigma_{I}$
is a non-flat shrinking gradient K\"ahler-Ricci soliton with bounded curvature \cite{topping, naber}.
Therefore, assuming the development of a finite time Type I singularity,
thanks to the classification given by Theorem \ref{mainthm1}, we are able to obtain as a corollary the following strong form of
the Feldman-Ilmanen-Knopf conjecture for such singularities on compact K\"ahler surfaces \cite[Example 2.2(3)]{FIK}.

\begin{mtheorem}[Non-collapsing]\label{thmc}
Let $(M,\,g(t))$ be a Type I K\"ahler-Ricci flow on $[0,\,T),\,T<+\infty,$ on a compact K\"ahler surface $M$ and
suppose that $x\in\Sigma_{I}$ is a Type I singular point as defined in Definition \ref{typeI}.
Then for every sequence $\lambda_{j}\to+\infty$, the rescaled K\"ahler-Ricci flows $(M,\,g_{j}(t),\,x)$ defined on $[-\lambda_{j}T,\,0)$ by $g_{j}(t):=\lambda_{j}g(T+\frac{t}{\lambda_{j}})$ subconverge in the smooth pointed Cheeger-Gromov topology to the unique shrinking gradient $U(2)$-invariant K\"ahler-Ricci soliton of Feldman-Ilmanen-Knopf \cite{FIK}
on the blowup of $\mathbb{C}^{2}$ at one point if and only if $\lim_{t\to T^{-}}\operatorname{vol}_{g(t)}(M)>0$.
\end{mtheorem}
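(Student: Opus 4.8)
The plan is to identify the subsequential limits of the rescaled flows with the solitons classified in Theorem~\ref{mainthm1} and its companions, to compute the limiting volume cohomologically, and to match the two via the asymptotic volume ratio of the tangent flow.

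First I would extract a tangent flow. Since $(M,g(t))$ is Type~I, one has $|\operatorname{Rm}_{g(t)}|_{g(t)}\le C/(T-t)$, so for any $\lambda_{j}\to+\infty$ the pointed rescalings $(M,g_{j}(t),x)$ enjoy uniform curvature bounds on compact time intervals; combined with Perelman's $\kappa$-non-collapsing and Hamilton's compactness theorem, they subconverge in the smooth pointed Cheeger-Gromov topology to a complete ancient solution. By \cite{naber} (cf.~\cite{topping}) this limit is self-similar, induced by a shrinking gradient K\"ahler-Ricci soliton $(M_{\infty},g_{\infty},X_{\infty})$ of bounded curvature, and it is non-flat because $x\in\Sigma_{I}$ carries an essential blow-up sequence. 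In complex dimension two the bounded curvature permits the classification: by the dichotomy of the introduction together with \cite{cds} and Theorem~\ref{mainthm1}, $(M_{\infty},g_{\infty},X_{\infty})$ is the Feldman-Ilmanen-Knopf soliton on $\operatorname{Bl}_{0}\mathbb{C}^{2}$ when $\RR_{g_{\infty}}\to 0$ along every integral curve (the flat Gaussian being ruled out by non-flatness), one of $\mathbb{C}\times\mathbb{P}^{1}$ and $\operatorname{Bl}_{p}(\mathbb{C}\times\mathbb{P}^{1})$ when $\RR_{g_{\infty}}\not\to 0$ along some integral curve, or else a compact soliton.

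Next I would isolate the deciding invariant. The Feldman-Ilmanen-Knopf soliton is asymptotic to $\mathbb{C}^{2}$ and thus has positive asymptotic volume ratio, whereas $\mathbb{C}\times\mathbb{P}^{1}$ and its blow-up grow only like $r^{2}$ and so have vanishing asymptotic volume ratio, and the compact model degenerates to a point; hence among all possible limits the Feldman-Ilmanen-Knopf soliton is exactly the non-collapsed one. On the base manifold, $[\rho_{\omega(t)}]=2\pi c_{1}(M)$ is fixed, so $[\omega(t)]=[\omega_{0}]-2\pi t\,c_{1}(M)$ and
\begin{equation*}
\lim_{t\to T^{-}}\operatorname{vol}_{g(t)}(M)=\tfrac{1}{2}\lim_{t\to T^{-}}[\omega(t)]^{2}=\tfrac{1}{2}[\omega(T)]^{2},
\end{equation*}
whence $\lim_{t\to T^{-}}\operatorname{vol}_{g(t)}(M)>0$ if and only if the nef limiting class $[\omega(T)]$ is big.

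The crux, and what I expect to be the main obstacle, is to match these two dichotomies: the tangent flow at $x$ is non-collapsed precisely when $[\omega(T)]$ is big. If the limit is $\mathbb{C}\times\mathbb{P}^{1}$ or its blow-up, then a one-parameter family of rational curves through $x$ is collapsing, so $\Sigma_{I}$ contains a curve, the contraction attached to $[\omega(T)]$ is not birational, and $[\omega(T)]^{2}=0$; the compact case forces $[\omega(T)]=0$ outright. Conversely, if $[\omega(T)]$ is big, then by the structure theory of finite-time singularities of the K\"ahler-Ricci flow on surfaces $\Sigma_{I}$ is finite and each tangent flow is non-collapsed, hence equals the Feldman-Ilmanen-Knopf soliton by the classification above. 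The difficulty here is genuinely global: a direct attempt to promote the positive asymptotic volume ratio of the tangent flow to a macroscopic lower volume bound fails, since Bishop-Gromov monotonicity propagates small-scale volume bounds in the wrong direction, so one must appeal to the birational geometry of $[\omega(T)]$ rather than argue purely metrically. Granting this, the equivalence follows: when $\lim_{t\to T^{-}}\operatorname{vol}_{g(t)}(M)>0$ every subsequential limit is non-collapsed and is therefore the Feldman-Ilmanen-Knopf soliton, so $(M,g_{j}(t),x)$ subconverges to it for every $\lambda_{j}\to+\infty$; when the limit volume vanishes every tangent flow is collapsed and never Feldman-Ilmanen-Knopf, so the convergence statement fails. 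Uniqueness of the non-collapsed model also yields independence of the limit from the rescaling sequence.
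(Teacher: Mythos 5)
Your setup coincides with the paper's: extract the tangent flow via \cite{topping, naber}, upgrade it to a K\"ahler limit, and classify the candidates (FIK, the cylinder, its blowup, or a compact soliton) via Lemma \ref{batman}, \cite[Theorem E(3)]{cds}, and Theorem \ref{mainthm1}; the cohomological identity $\lim_{t\to T^{-}}\operatorname{vol}_{g(t)}(M)=\tfrac12[\omega(T)]^{2}$ is also correct. But the crux of the theorem --- matching ``the tangent flow at $x$ is FIK'' with ``$[\omega(T)]$ is big'' --- is asserted rather than proved, and you concede as much (``Granting this, the equivalence follows''). Concretely, for the direction $\lim\operatorname{vol}>0\Rightarrow\mathrm{FIK}$, your claim that a cylinder-type limit yields ``a one-parameter family of rational curves through $x$ collapsing, so \dots the contraction attached to $[\omega(T)]$ is not birational'' skips the essential step of transplanting the limit's spheres back into $M$ in a fixed homology class. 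The paper does this by perturbing a zero self-intersection $\widetilde{J}$-holomorphic sphere, chosen in an annulus of the limit away from any $(-1)$-curves, to $J$-holomorphic spheres $C_{k}\subset M$ via Corollary \ref{curve}, and then computing the volume two ways: in the limit $\operatorname{vol}_{h(t)}(C)=-4\pi t$ by adjunction and self-similarity, while in $M$ the computation \eqref{contradiction} gives $\operatorname{vol}_{\phi_{k}^{*}g_{k}(t)}(C_{k})=\lambda_{k}\lim_{s\to T^{-}}\operatorname{vol}_{\omega(s)}(\phi_{k}^{-1}(C_{k}))-4\pi t$. The contradiction then requires \cite[Theorem 3.8.3]{Bou-Eys-Gue}: under non-collapsing the flow contracts only $(-1)$-curves, and since $C_{k}.C_{k}=0$ the limit volume of the class is a positive constant independent of $k$, so the $\lambda_{k}$-term diverges against \eqref{limitt}. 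Without some version of this mechanism your inference ``$[\omega(T)]^{2}=0$'' has no support.

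The converse direction is where your proposal is weakest, and it is not a matter of routine birational geometry. You need: FIK appears $\Rightarrow$ no volume collapse, i.e., you must rule out FIK arising as a tangent flow of a \emph{collapsed} singularity, and as you yourself observe, the positive asymptotic volume ratio of FIK cannot be promoted to a macroscopic volume bound (Bishop--Gromov runs the wrong way). The paper's argument here is a genuinely quantitative holomorphic-curve argument with no counterpart in your sketch: assuming FIK and $\operatorname{vol}_{g(t)}(M)\to0$, the structure theory of \cite{tosatti10} gives a Fano fibration $\pi:M\to B$ with one-dimensional base (the point case being excluded by Perelman's diameter bound forcing a compact limit), generic fibre $\mathbb{P}^{1}$, and singular fibres that are bubble trees of two $(-1)$-curves; the FIK exceptional curve $\widetilde{E}$ is perturbed via Corollary \ref{curve2} to $E_{k}\subset M$, identified as one component $E_{(1)}$ of such a bubble tree; the other component $E_{(2)}$ must cross a sphere $\{|z|=\hat{R}\}$ of controlled geometry (using the $U(2)$-invariance and conical asymptotics of FIK), whence the monotonicity estimate of \cite{sikorav} forces $\operatorname{vol}_{g_{k}(-1)}(E_{(2)})\geq 4\pi$, while the cohomological computation with $(E_{(2)})^{2}=-1$ gives exactly $2\pi$. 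This $4\pi$-versus-$2\pi$ contradiction is the content of the ``only if'' direction; your assertion that ``when the limit volume vanishes every tangent flow is collapsed and never Feldman--Ilmanen--Knopf'' is precisely the statement to be proved, so as it stands the proposal has a genuine gap at the heart of both implications.
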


This characterises the Feldman-Ilmanen-Knopf shrinking soliton as the unique shrinking soliton modelling finite time Type I non-collapsed singularities of the K\"ahler-Ricci flow on
compact K\"ahler surfaces. The ``if'' direction of Theorem \ref{thmc} is known to hold true for $U(n)$-invariant K\"ahler-Ricci flows on the blowup of $\mathbb{P}^{n}$ at one point \cite{guo}.
Moreover, on this manifold, it is known that any $U(n)$-invariant solution of the K\"ahler-Ricci flow developing a finite time singularity
is a singularity of Type I \cite{sing}. Similar results were obtained by M\'aximo \cite{maximo} for $n=2$.
However, contrary to a folklore conjecture, not every finite time singularity of the K\"ahler-Ricci flow is of Type I \cite{li1}, although this is expected to be the case for K\"ahler-Ricci flows on compact K\"ahler surfaces.

The proof of Theorem \ref{thmc} is by contradiction. Assuming volume non-collapsing, we consider the volume evolution of the unique $(-1)$-curve in the Feldman-Ilmanen-Knopf shrinking soliton under the K\"ahler-Ricci flow to rule out other possible shrinking solitons appearing as the rescaled limit. For the other direction, we assume volume collapsing and the
appearance of the Feldman-Ilmanen-Knopf shrinking soliton to derive a nonsensical lower bound on the volume of a $(-1)$-curve in the original manifold.
This direction crucially relies on the structure of collapsing singularities of the K\"ahler-Ricci flow in complex dimension $2$ given by \cite{tosatti10}
and the asymptotics and symmetry of the aforementioned soliton.

Given Theorem \ref{thmc}, we can now classify the finite time Type I rescaled limits of the K\"ahler-Ricci flow on a compact K\"ahler surface $M$. To this end, let
$(M,\,g(t))_{t\in[0,\,T)}$ be a K\"ahler-Ricci flow developing a finite Type I singularity when $t=T>0$. Take the blowup limit as is done in Theorem \ref{thmc}.
If $\lim_{t\to T^{-}}\operatorname{vol}_{g(t)}(M)>0$, then Theorem \ref{thmc} asserts that the blowup limit is the Feldman-Ilmanen-Knopf shrinking soliton on
the blowup of $\mathbb{C}^{2}$ at one point. This picture is consistent with finite time singularities of the K\"ahler-Ricci flow on
compact K\"ahler surfaces being of Type I. Indeed, under the assumption of non-collapsing, it is known that the flow contracts finitely many disjoint $(-1)$-curves on $M$
\cite[Theorem 3.8.3]{Bou-Eys-Gue}. On the other hand, if there is finite time collapsing at $t=T>0$, i.e., if
$\lim_{t\to T^{-}}\operatorname{vol}_{g(t)}(M)=0$, then either $\lim_{t\to T^{-}}\operatorname{diam}(M,\,g(t))=0$, which is a ``finite time extinction'',
or $\lim_{t\to T^{-}}\operatorname{diam}(M,\,g(t))>0$. In the former case, \cite{tosatti10} asserts that $M$ is Fano and
the K\"ahler class of $g(0)$ lies in $c_{1}(M)$. The work of
Perelman (see \cite{sesum1}) gives us the upper bound $\operatorname{diam}(M,\,g(t))\leq C(T-t)^{\frac{1}{2}}$, which,
for the re-scaled limit $g_{j}(t)$, $t<0$, translates to $\operatorname{diam}(M,\,g_{j}(t))\leq Ct$. This latter bound implies that
the rescaled limit is compact, hence being a shrinking soliton, is Fano with its (up to automorphism) unique shrinking soliton structure. In the latter case,
the blowup limit cannot be Fano as the compactness of such a manifold would imply that
$\lim_{t\to T^{-}}\operatorname{diam}(M,\,g(t))=0$, a contradiction. By Theorem \ref{thmc}, the blowup limit cannot be
the shrinking soliton of Feldman-Ilmanen-Knopf. Hence the only possibility is that the blowup limit is the cylinder
$\mathbb{C}\times\mathbb{P}^{1}$ or the hypothetical soliton of Conjecture \ref{conjecture}.
The precise soliton that appears would depend upon the proximity of the blowup point to a $(-1)$-curve.
This collapsing picture is also consistent with finite time singularities of the K\"ahler-Ricci flow on
compact K\"ahler surfaces being of Type I as under the assumption of finite time collapsing, it is known that the underlying complex manifold
is birational to a ruled surface \cite[Proposition 3.8.4]{Bou-Eys-Gue}.

\subsection{Outline of paper}

We begin in Section \ref{Jholo} by presenting the background material on $J$-holomorphic curves that we need to prove Theorem \ref{mainthm1}.
We then recall in Section \ref{intro2} the basics of shrinking Ricci and K\"ahler-Ricci solitons.
In Section \ref{pooly}, we digress and mention some basics
on polyhedrons and polyhedral cones that we need before moving on to some relevant information concerning Hamiltonian actions in Section \ref{hamilton}.
Section \ref{toric-geom} then comprises the background material on toric geometry that we need. In particular, we recall the definition of the
weighted volume functional and discuss its properties in Section \ref{weighted}. Moreover, in this section, we
determine  explicitly the unique holomorphic vector field on the manifolds of Theorem \ref{mainthm1}(i)
that could be the soliton vector field of a shrinking gradient K\"ahler-Ricci soliton with bounded scalar curvature.

In Section \ref{proof1}, we prove Theorem \ref{mainthm1}. We first prove in Proposition \ref{smoothclass} a smooth classification of the underlying manifold, a precursor
to the holomorphic classification given by Proposition \ref{holoo}. This section concludes by completing the proof of Theorem \ref{mainthm1}.

In the final section, namely Section \ref{proof2}, we prove Theorem \ref{thmc}.

\subsection{Acknowledgements}
The authors wish to thank Song Sun and Jeff Viaclovsky for useful discussions, Ovidiu Munteanu for providing the proof of Lemma \ref{batman},
and Max Hallgren for pointing out an oversight in the original proof of Theorem \ref{thmc}. The first author is supported by the grant Connect Talent ``COCOSYM’’ of the r\'{e}gion des Pays de la Loire, the second author is supported by NSF grant DMS-1906466, and the third author is supported by grants ANR-17-CE40-0034 of the French National Research Agency ANR (Project CCEM) and ANR-AAPG2020 (Project PARAPLUI).

\section{Preliminaries}\label{prelim}

\subsection{$J$-holomorphic curves}\label{Jholo}

In this section, we summarise the tools from the theory of $J$-holomorphic curves that we need in the context of K\"ahler manifolds. The source for this material is \cite{dusa2, dusa1}.

Let $(M,\,\,J)$ be an $n$-dimensional complex manifold and let $(\Sigma,\,j)$ be a compact Riemann surface with complex structures $J$ and $j$, respectively.
We say that a smooth map $u:\Sigma\to M$ is a \emph{$J$-holomorpic curve} if the differential $du$ is a complex linear map with respect to $j$ and $J$, i.e.,
$$J\circ du=du\circ j.$$ A smooth map $u:(\Sigma,\,j)\to(M,\,J)$ is $J$-holomorphic if and only if
\begin{equation}\label{holomorphic}
\bar{\partial}_{J}u=0,
\end{equation}
where $$\bar{\partial}_{J}u:=\frac{1}{2}(du+J\circ du\circ j).$$
By definition, a $J$-holomorphic curve is always parametrised. A $J$-holomorphic curve
$u:(\Sigma\,\,j)\to M$ is said to be \emph{multiply covered} if there exists a $J$-holomorphic
curve $u':(\Sigma',\,j')\to M$ and a branched covering $\phi:\Sigma\to\Sigma'$ of degree strictly greater than $1$
such that $u$ factors as $u=u'\circ\phi$. The curve $u$ is called \emph{simple} if it is not multiply covered. If $u$ is
a multiply covered $J$-holomorphic curve from $\mathbb{P}^{1}$, then by the Riemann-Hurwitz formula, $\Sigma'=\mathbb{P}^{1}$
also.

We henceforth restrict ourselves to $J$-holomorphic spheres, that is, when $\Sigma=\mathbb{P}^{1}$.
For a given homology class $A\in H_{2}(M,\,\mathbb{Z})$, we denote for such curves the moduli space of solutions to
\eqref{holomorphic} by
$$\mathcal{M}(A;\,J):=\{u\in C^{\infty}(\mathbb{P}^{1},\,M)\:|\:J\circ du=du\circ j,\,[u(\mathbb{P}^{1})]=A\}$$
and the subspace of simple solutions by
$$\mathcal{M}^{*}(A;\,J):=\{u\in\mathcal{M}(A;\,J)\:|\:\textrm{$u$ is simple}\}.$$

For a compact Riemannian manifold $N$, let $\Omega^{0}(N,\,E)$ denote the space of smooth sections of the bundle $E\to N$. Moreover, let $\Lambda^{0,\,1}:=\Lambda^{0,\,1}T^{*}\mathbb{P}^{1}$
denote the bundle of $1$-forms on $\mathbb{P}^{1}$ of type $(0,\,1)$. Assume now that $(M,\,J)$ is K\"ahler with a given K\"ahler form $\omega$
and for a given smooth (not necessarily $J$-holomorphic) curve $u:\mathbb{P}^{1}\to M$,
we define a map $$\mathcal{F}_{u}:\Omega^{0}(\mathbb{P}^{1},\,u^{*}TM)\to\Omega^{0}(\mathbb{P}^{1},\,\Lambda^{0,\,1}\otimes_{J}u^{*}TM)$$ as follows. Given $\xi\in\Omega^{0}(\mathbb{P}^{1},\,u^{*}TM)$,
let $$\Phi_{u}(\xi):u^{*}TM\to\exp_{u}(\xi)^{*}TM$$
denote the complex bundle isomorphism given by parallel transport along the geodesics $s\mapsto$\linebreak $\exp_{u(z)}(s\xi(z))$ with respect to the Levi-Civita connection
$\nabla$ induced by $\omega$. Then define
\begin{equation}\label{capitalf}
\mathcal{F}_{u}(\xi):=\Phi_{u}(\xi)^{-1}\bar{\partial}_{J}(\exp_{u}(\xi)).
\end{equation}
Write $\Omega_{J}^{0,\,1}(\mathbb{P}^{1},\,u^{*}TM):=\Omega^{0}(\mathbb{P}^{1},\,\Lambda^{0,\,1}\otimes_{J}u^{*}TM)$, where we drop the subscript $J$ when there is no ambiguity, and let $D_{u}$
denote the linearisation $d\mathcal{F}_{u}(0)$ of $\mathcal{F}_{u}$ at $0$. Then $D_{u}$ defines an operator
$$D_{u}:\Omega^{0}(\mathbb{P}^{1},\,u^{*}TM)\to\Omega_{J}^{0,\,1}(\mathbb{P}^{1},\,u^{*}TM),$$
which in our situation with $J$ complex is given by
\begin{equation}\label{derivative}
D_{u}\xi:=\frac{1}{2}\left(\nabla\xi+J(u)\nabla\xi\circ j\right)
\end{equation}
for every $\xi\in\Omega^{0}(\mathbb{P}^{1},\,u^{*}TM)$ \cite[Proposition 3.1.1]{dusa1}, i.e., $D_{u}\xi$ is the projection of $\nabla\xi$ onto $\Omega^{0,\,1}(\mathbb{P}^{1},\,u^{*}TM)$.
This is a real linear ``Cauchy-Riemann'' operator (cf.~\cite[Appendix C]{dusa1}), hence is Fredholm \cite[Theorem C.1.10]{dusa1}, meaning that it has closed range and finite dimensional kernel and
cokernel. The Riemann-Roch theorem asserts that its Fredholm index is
\begin{equation*}
\operatorname{index}\,D_{u}=2n+2c_{1}(u^{*}TM),
\end{equation*}
where $n=\dim_{\mathbb{R}}M$. In the case that $u:\mathbb{P}^{1}\to M$ is actually a $J$-holomorphic curve, $D_{u}$ is precisely the Dolbeault
$\bar{\partial}$-operator
$$\bar{\partial}:\Omega^{0}(u^{*}TM)\to\Omega^{0,\,1}(u^{*}TM).$$
If in addition $D_{u}$ is surjective, then $\mathcal{M}^{*}([u(\mathbb{P}^{1})];\,J)$ is a smooth oriented manifold near $u$
of real dimension $2n+2c_{1}(u^{*}TM)$ \cite[Theorem 3.1.5]{dusa1}.

The following is well-known.

\begin{prop}[Local deformations]\label{local1}
Let $M$ be a two-dimensional complex manifold with complex structure $J$, let $C$ be a simple embedded $J$-holomorphic sphere with
$C.C=0$, and let $D$ denote the open ball of radius $1$ in $\mathbb{C}$. Then there exists an open neighbourhood $U$ of $C$ that is diffeomorphic to $D\times\mathbb{P}^{1}$ with $\{t\}\times\mathbb{P}^{1}$ a $J$-holomorphic sphere in $M$ for each $t\in D$ and $\{0\}\times\mathbb{P}^{1}=C$.
\end{prop}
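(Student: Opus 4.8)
The plan is to run the standard deformation theory of $J$-holomorphic spheres set up above, exploiting the special features of an embedded sphere of vanishing self-intersection in a complex surface. Write $u:\mathbb{P}^{1}\to M$ for a parametrisation of $C$, so that $u^{*}TM$ is a holomorphic rank-$2$ bundle over $\mathbb{P}^{1}$ and $D_{u}=\bar{\partial}$. First I would record the two cohomological inputs. Since $C$ is embedded there is a short exact sequence of holomorphic bundles
\begin{equation*}
0\longrightarrow T\mathbb{P}^{1}\longrightarrow u^{*}TM\longrightarrow N_{C}\longrightarrow 0,
\end{equation*}
where the normal bundle $N_{C}$ is a line bundle of degree $C.C=0$, hence $N_{C}\cong\mathcal{O}_{\mathbb{P}^{1}}$, while $T\mathbb{P}^{1}\cong\mathcal{O}_{\mathbb{P}^{1}}(2)$. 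As $H^{1}(\mathbb{P}^{1},\mathcal{O}(2))=H^{1}(\mathbb{P}^{1},\mathcal{O})=0$, the long exact cohomology sequence gives $H^{1}(\mathbb{P}^{1},u^{*}TM)=0$, i.e.\ $\operatorname{coker}D_{u}=0$ and $D_{u}$ is surjective. (This is automatic transversality in real dimension four, valid precisely because $C.C\geq-1$.)

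Given surjectivity, \cite[Theorem 3.1.5]{dusa1} shows that $\mathcal{M}^{*}([C];\,J)$ is a smooth manifold near $u$, of real dimension $2n+2c_{1}(u^{*}TM)=8$, since $n=\dim_{\mathbb{C}}M=2$ and $c_{1}(u^{*}TM)=\deg T\mathbb{P}^{1}+\deg N_{C}=2$. Quotienting by the free reparametrisation action of $\operatorname{Aut}(\mathbb{P}^{1})=PSL(2,\mathbb{C})$ — free because $C$ is simple — produces a smooth space $\mathcal{W}$ of unparametrised nearby curves of real dimension $8-6=2$, whose tangent space at $[C]$ is $H^{0}(\mathbb{P}^{1},N_{C})\cong\mathbb{C}$, read off from $0\to H^{0}(T\mathbb{P}^{1})\to H^{0}(u^{*}TM)\to H^{0}(N_{C})\to 0$. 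In particular $\mathcal{W}$ is, near $[C]$, a disc which after rescaling I identify with $D$; write $C_{t}$, $t\in D$, for the corresponding family of $J$-holomorphic spheres with $C_{0}=C$, each embedded since embeddedness is an open condition among $C^{1}$-close maps.

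The geometric heart is to assemble the $C_{t}$ into a product neighbourhood. The key point is that every nonzero element of $H^{0}(N_{C})=H^{0}(\mathcal{O}_{\mathbb{P}^{1}})$ is nowhere-vanishing, so the infinitesimal displacement of $C$ inside the family is everywhere transverse to $TC$. Concretely, choosing the diffeomorphisms $C_{t}\cong\mathbb{P}^{1}$ to depend smoothly on $t$, I define $F:D\times\mathbb{P}^{1}\to M$ carrying $\{t\}\times\mathbb{P}^{1}$ onto $C_{t}$; along $\{0\}\times\mathbb{P}^{1}$ its differential is an isomorphism (the $\mathbb{P}^{1}$-derivative spans $TC$, the $D$-derivative is the nowhere-zero normal section), so $F$ is a local diffeomorphism near the compact set $\{0\}\times\mathbb{P}^{1}$. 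Since $F$ restricts there to the embedding $u$, the standard fact that a local diffeomorphism injective on a compact set is injective on a neighbourhood lets me shrink $D$ so that $F$ becomes a diffeomorphism onto an open neighbourhood $U$ of $C$; by construction each $\{t\}\times\mathbb{P}^{1}$ maps to the $J$-holomorphic sphere $C_{t}$ and $\{0\}\times\mathbb{P}^{1}=C$.

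The main obstacle I anticipate is not the transversality, which is automatic here, but the careful passage from the abstract moduli space to the explicit product structure: trivialising the family as $D\times\mathbb{P}^{1}$ and upgrading the local diffeomorphism to global injectivity on a full product neighbourhood. The cleanest tool for the latter, special to real dimension four, is positivity of intersections: distinct nearby simple curves satisfy $C_{t}.C_{t'}=C.C=0$, which forces them to be disjoint, so the $C_{t}$ genuinely foliate $U$ and $F$ is injective.
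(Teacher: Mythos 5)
Your proposal is correct and follows essentially the same route as the paper: automatic transversality of $D_{u}=\bar{\partial}$ from the vanishing of $H^{1}$ (the paper uses the direct sum $u^{*}TM=\mathcal{O}\oplus\mathcal{O}(2)$ where you use the normal bundle sequence, which is equivalent on $\mathbb{P}^{1}$), the smooth $8$-dimensional moduli space, the free $PSL(2,\mathbb{C})$-quotient of dimension $2$, assembly of the family into a map $D\times\mathbb{P}^{1}\to M$ that is a local diffeomorphism along the central fibre, and disjointness of distinct nearby curves via $C.C=0$ and positivity of intersections. The only cosmetic difference is that you get embeddedness of nearby curves from openness of embeddings in the $C^{1}$-topology, whereas the paper invokes McDuff's adjunction formula; both suffice here.
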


\begin{proof}
Fix a parametrisation $u:\mathbb{P}^{1}\to C\subset M$. As $u$ is $J$-holomorphic, we know that
$\bar{\partial}_{J}u=0$. The linearisation $D_{u}$ of $\mathcal{F}_{u}$ at $0$ is then Fredholm and is precisely the Dolbeault $\bar{\partial}$-operator with respect to $J$, namely
$$D_{u}=\bar{\partial}:\Omega^{0}(u^{*}TM)\to\Omega^{0,\,1}(u^{*}TM).$$ Moreover, as $C$ has trivial holomorphic normal bundle, we have the direct sum decomposition
$u^{*}TM=\mathcal{O}\oplus\mathcal{O}(2)$, a splitting that is respected by $\bar{\partial}$. Therefore, recalling the proof of \cite[Lemma 3.3.1]{dusa1}, we can consider the action of $\bar{\partial}$ on each factor separately. For any holomorphic line bundle $L\to\mathbb{P}^{1}$, the cokernel of
$\bar{\partial}:\Omega^{0}(\mathbb{P}^{1},\,L)\to\Omega^{0,\,1}(\mathbb{P}^{1},\,L)$ is precisely the Dolbeault cohomology group
$H^{0,\,1}_{\bar{\partial}}(\mathbb{P}^{1},\,L)$. Now, we have an isomorphism $$H^{0,\,1}_{\bar{\partial}}(\mathbb{P}^{1},\,L)\cong(H^{1,\,0}_{\bar{\partial}}(\mathbb{P}^{1},\,L^{*}))^{*},$$
where $H^{1,\,0}_{\bar{\partial}}(\mathbb{P}^{1},\,L^{*})$ is the space of holomorphic one-forms with values in the dual bundle $L^{*}$ and which itself is isomorphic to  $H^{0}(\mathbb{P}^{1},\,L^{*}\otimes K_{\mathbb{P}^{1}})$, the space of holomorphic sections of the bundle
$L^{*}\otimes K_{\mathbb{P}^{1}}$ by Kodaira-Serre duality. Hence
$H^{0,\,1}_{\bar{\partial}}(\mathbb{P}^{1},\,\mathcal{O})=H^{0,\,1}_{\bar{\partial}}(\mathbb{P}^{1},\,\mathcal{O}(2))=0$. In particular,
$D_{u}$ is surjective of Fredholm index $8$ so that $\mathcal{M}^{*}([C];\,J)$ is a smooth oriented manifold of real dimension $8$ near $u$. Indeed,
it follows from \cite[Corollary 3.3.4]{dusa1} that $D_{v}$ is surjective for every $v\in\mathcal{M}^{*}([C];\,J)$, hence
$\mathcal{M}^{*}([C];\,J)$ itself is a smooth oriented manifold of real dimension $8$.

Recall that $\mathcal{M}^{*}([C];\,J)$ comprises parametrised $J$-holomorphic curves.
The six-dimensional real Lie group $PSL(2,\,\mathbb{C})$, which we henceforth denote by $G$, acts freely on $\mathcal{M}^{*}([C];\,J)$ via reparametrisation:
$$g\cdot v=v\circ g^{-1}\quad\textrm{for all $g\in G$ and $v\in\mathcal{M}^{*}([C];\,J)$}.$$
We consider the quotient space
$$\widetilde{\mathcal{M}}^{*}([C];\,J):=\mathcal{M}^{*}([C];\,J)/G.$$
This is precisely the space of $J$-holomorphic spheres in $M$ in the same homology class as $C$
and is a smooth oriented manifold of real dimension $8-6=2$. As $C$ is simple and embedded, McDuff's adjunction formula \cite[Corollary E.1.7]{dusa1} implies that
every sphere in $\widetilde{\mathcal{M}}^{*}([C];\,J)$ is embedded in $M$. In addition, the fact that $C.C=0$ implies that any two distinct $\mathbb{P}^{1}$'s in $\widetilde{\mathcal{M}}^{*}([C];\,J)$ are disjoint in $M$.

Set $\mathcal{M}^{*}([C];\,J)\times_{G}\mathbb{P}^{1}\equiv(\mathcal{M}^{*}([C];\,J)\times\mathbb{P}^{1})/G,$
where $G$ acts on $\mathcal{M}^{*}([C];\,J)\times\mathbb{P}^{1}$ by $g\cdot(v,\,z)\mapsto(v\circ g^{-1},\,g\cdot z)$.
Then $\mathcal{M}^{*}([C];\,J)\times_{G}\mathbb{P}^{1}$ is a smooth manifold of real dimension $4$ which is a $\mathbb{P}^{1}$-bundle
over $\widetilde{\mathcal{M}}^{*}([C];\,J)$. We define an evaluation map $\operatorname{ev}$ by
$$\operatorname{ev}:\mathcal{M}^{*}([C];\,J)\times_{G}\mathbb{P}^{1}\mapsto M,\qquad[(v,\,z)]\mapsto v(z).$$
This is a smooth map between two oriented smooth manifolds of the same dimension that maps every fibre
$\{[(v,\,z)]\:|\:z\in\mathbb{P}^{1}\}$ biholomorphically onto an embedded $J$-holomorphic sphere in $M$, with distinct fibres
being mapped to distinct $J$-holomorphic spheres in $M$ with $\{[(u,\,z)]\:|\:z\in\mathbb{P}^{1}\}$ being mapped to $C$.
In particular, $\operatorname{ev}$ is an immersion between two manifolds of the same dimension, hence is a local diffeomorphism.
Choosing a trivialisation of the $\mathbb{P}^{1}$-bundle in a neighbourhood of
the fibre $\{[(u,\,z)]\:|\:z\in\mathbb{P}^{1}\}$ now yields the result.
\end{proof}

Next, for a compact Riemannian manifold $N$ and for $k\geq1$ an integer and $p>2$ a real number, let $W^{k,\,p}(N,\,E)$ denote the completion of the space $\Omega^{0}(N,\,E)$
of smooth sections of the bundle $E\to N$ with respect to the Sobolev $W^{k,\,p}$-norm. Again, assume that $(M,\,J)$ is K\"ahler with K\"ahler form $\omega$ and endow $(\mathbb{P}^{1},\,j)$ with the Fubini-Study form $\omega_{\mathbb{P}^{1}}$ compatible with $j$. For a given smooth curve $u:\mathbb{P}^{1}\to M$ and real number $p>2$,
let
\begin{equation}\label{spaces}
\mathcal{X}_{u}^{p}:=W^{1,\,p}(\mathbb{P}^{1},\,u^{*}TM),\qquad\mathcal{Y}_{u}^{p}:=L^{p}(\mathbb{P}^{1},\,\Lambda^{0,\,1}\otimes_{J}u^{*}TM),
\end{equation}
where all relevant norms are understood to be with respect to $\omega$ and $\omega_{\mathbb{P}^{1}}$
and the Levi-Civita connection $\nabla$ determined by $\omega$. Then the maps $\mathcal{F}_{u}$ and $D_{u}$
defined above for smooth sections extend in a natural way to maps
$\mathcal{F}_{u}:\mathcal{X}_{u}^{p}\to\mathcal{Y}_{u}^{p}$.

One can prove that if $u$ is an \emph{approximate} $J$-holomorphic curve with \emph{sufficiently surjective} operator $D_{u}$, then there are $J$-holomorphic curves
near $u$ and the moduli space can be modelled on a neighbourhood of zero in the kernel of $D_{u}$. More precisely, we have the following theorem.

\begin{theorem}[{\cite[Theorem 3.3.4]{dusa2} with $\Sigma=\mathbb{P}^{1}$} and $u:\Sigma\to M$ smooth]\label{beauty}
Let $p>2$ and let $\norm{\cdot}$ denote the operator norm. Then for every constant $c_{0}>0$, there exist constants $\delta>0$ and $c>0$ such that the following holds.
Let $u:\mathbb{P}^{1}\to M$ be a smooth map and $Q_{u}:\mathcal{Y}_{u}^{p}\to\mathcal{X}_{u}^{p}$ be a right inverse of $D_{u}$ such that
$$\norm{Q_{u}}\leq c_{0},\qquad\norm{du}_{L^{p}}\leq c_{0},\qquad\norm{\bar{\partial}_{J}u}_{L^{p}}\leq\delta,$$
with respect to a metric on $\mathbb{P}^{1}$ such that $\operatorname{vol}(\mathbb{P}^{1})\leq c_{0}$. Then for every $\xi\in\ker(D_{u})$
with $\norm{\xi}_{L^{p}}\leq\delta$, there exists a section $\tilde{\xi}=Q_{u}\eta\in\mathcal{X}_{u}^{p}$ such that
$$\bar{\partial}_{J}(\exp_{u}(\xi+Q_{u}\eta))=0,\qquad\norm{Q_{u}\eta}_{W^{1,\,p}}\leq c\norm{\bar{\partial}_{J}(\exp_{u}(\xi))}_{L^{p}}.$$
\end{theorem}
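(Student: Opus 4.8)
The plan is to solve $\mathcal{F}_u(\xi + Q_u\eta)=0$ for $\eta\in\mathcal{Y}_u^p$ by a quantitative Newton--Picard iteration, recasting it as a fixed-point problem to which the Banach fixed-point theorem applies. Since $\bar{\partial}_J(\exp_u(\zeta))=\Phi_u(\zeta)\mathcal{F}_u(\zeta)$ and $\Phi_u(\zeta)$ is a parallel-transport isomorphism, hence a fibrewise complex-linear isometry, solving $\mathcal{F}_u(\xi+Q_u\eta)=0$ is equivalent to the desired vanishing; moreover $\|\mathcal{F}_u(\xi)\|_{L^p}=\|\bar{\partial}_J(\exp_u(\xi))\|_{L^p}=:\epsilon$, which is exactly the quantity controlling the right-hand side of the claimed estimate. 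First I would record the algebraic reduction: writing $R(\eta):=\mathcal{F}_u(\xi+Q_u\eta)-\mathcal{F}_u(\xi)-D_u(Q_u\eta)$ for the nonlinear remainder and using $D_uQ_u=\operatorname{Id}$ together with $D_u\xi=0$ (as $\xi\in\ker D_u$), the equation $\mathcal{F}_u(\xi+Q_u\eta)=0$ becomes $\eta=-\mathcal{F}_u(\xi)-R(\eta)$. Thus a fixed point of $T(\eta):=-\mathcal{F}_u(\xi)-R(\eta)$ produces the solution, and since $R(0)=0$ we have $\|T(0)\|_{L^p}=\epsilon$.

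The crux of the argument is a uniform quadratic estimate for $\mathcal{F}_u$: there is a constant $\kappa=\kappa(c_0,p)$ with
\[
\|\mathcal{F}_u(\zeta_1)-\mathcal{F}_u(\zeta_2)-D_u(\zeta_1-\zeta_2)\|_{L^p}\leq \kappa\bigl(\|\zeta_1\|_{W^{1,p}}+\|\zeta_2\|_{W^{1,p}}\bigr)\|\zeta_1-\zeta_2\|_{W^{1,p}}
\]
for $\zeta_1,\zeta_2$ of small $W^{1,p}$-norm. This is where the hypotheses $\|du\|_{L^p}\leq c_0$ and $\operatorname{vol}(\mathbb{P}^1)\leq c_0$, together with bounded geometry of $M$ along the image of $u$, enter: expanding $\mathcal{F}_u$ by Taylor's theorem produces terms built from the exponential map, parallel transport, and the curvature of $\omega$, and the Sobolev embedding $W^{1,p}(\mathbb{P}^1)\hookrightarrow C^0(\mathbb{P}^1)$ — valid precisely because $p>2$ while $\dim_{\mathbb{R}}\mathbb{P}^1=2$ — converts the resulting pointwise products into the stated $L^p$ bound with a constant depending only on $c_0$ and $p$. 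Combined with $\|Q_u\|\leq c_0$ and the elliptic estimate $\|\xi\|_{W^{1,p}}\leq C\|\xi\|_{L^p}$ on $\ker D_u$ (with $C$ absorbed into the geometric constants), this yields
\[
\|R(\eta_1)-R(\eta_2)\|_{L^p}\leq \kappa c_0\bigl(\|\xi\|_{W^{1,p}}+c_0\|\eta_1\|_{L^p}+c_0\|\eta_2\|_{L^p}\bigr)\|\eta_1-\eta_2\|_{L^p},
\]
and, applied with $\zeta_1=\xi,\ \zeta_2=0$, shows $\epsilon\leq\delta+\kappa(C\delta)^2$ is small once $\delta$ is small.

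Finally I would run the contraction on the ball $B=\{\eta\in\mathcal{Y}_u^p:\|\eta\|_{L^p}\leq 2\epsilon\}$. Choosing $\delta$ small enough that $\kappa c_0(C\delta+4c_0\epsilon)\leq\tfrac12$, the Lipschitz bound above makes $T$ a $\tfrac12$-contraction on $B$; and since $\|T(\eta)\|_{L^p}\leq\epsilon+\|R(\eta)\|_{L^p}\leq\epsilon+\tfrac12\|\eta\|_{L^p}\leq 2\epsilon$, the map $T$ sends $B$ into itself. The Banach fixed-point theorem then provides $\eta\in B$ with $\mathcal{F}_u(\xi+Q_u\eta)=0$, hence $\bar{\partial}_J(\exp_u(\xi+Q_u\eta))=0$, and $\|Q_u\eta\|_{W^{1,p}}\leq c_0\|\eta\|_{L^p}\leq 2c_0\epsilon=c\,\|\bar{\partial}_J(\exp_u(\xi))\|_{L^p}$ with $c:=2c_0$, as required. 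The main obstacle is the uniform quadratic estimate: one must check that the Taylor remainder of $\mathcal{F}_u$ is controlled by a constant independent of $u$ except through the listed quantities, which is exactly the step where the $p>2$ Sobolev embedding and the normalisations $\|du\|_{L^p},\operatorname{vol}(\mathbb{P}^1)\leq c_0$ are indispensable.
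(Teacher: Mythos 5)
Your proposal is correct and takes essentially the same route as the paper: the paper does not prove Theorem \ref{beauty} itself but imports it from \cite[Theorem 3.3.4]{dusa2}, remarking only that it ``is proved using the implicit function theorem'', and your Newton--Picard contraction---reducing $\mathcal{F}_{u}(\xi+Q_{u}\eta)=0$ via $D_{u}Q_{u}=\operatorname{Id}$ to a fixed-point problem governed by the uniform quadratic estimate for $\mathcal{F}_{u}$, with $p>2$ supplying the Sobolev embedding $W^{1,\,p}\hookrightarrow C^{0}$---is precisely the argument given there. The one point you flag but do not fully verify, uniformity (in terms of $c_{0}$ alone) of the quadratic estimate and of the elliptic bound $\norm{\xi}_{W^{1,\,p}}\leq C\norm{\xi}_{L^{p}}$ on $\ker D_{u}$, is handled identically in the cited source, so nothing essential is missing.
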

This theorem is proved using the implicit function theorem. Given a surjective operator $D_{u}$, one technique for constructing a right inverse $Q_{u}$ is
to reduce the domain of $D_{u}$ by imposing pointwise conditions on $\xi$ so that the resulting operator is bijective, and then taking $Q_{u}$ to be the inverse of this
restricted operator. We will use this to prove the following two corollaries of this theorem.

\begin{corollary}[Deformation of trivially-embedded curves]\label{curve}
Let $M$ be a manifold of real dimension $4$, let $(g,\,J)$ and $(\tilde{g},\,\widetilde{J})$ be two K\"ahler structures on $M$, let $p>2$, and
let $u:(\mathbb{P}^{1},\,j)\to(M,\,\widetilde{J})$ be a smooth $\widetilde{J}$-holomorphic curve with trivial self-intersection.
Denote the Levi-Civita connection of $\tilde{g}$ by $\widetilde{\nabla}$. Then
for all $x\in u(\mathbb{P}^{1})$, there exists $\varepsilon>0$ such that if
\begin{equation}\label{difference1}
|g-\tilde{g}|_{\tilde{g}}+|\widetilde{\nabla}(g-\tilde{g})|_{\tilde{g}}+|J-\widetilde{J}|_{\tilde{g}}<\varepsilon
\end{equation}
on some sufficiently large compact subset $K\subset M$ containing $u(\mathbb{P}^{1})$, then there exists a unique
smooth section $\tilde{\xi}\in\Gamma(u^{*}TM)$ with $\tilde{\xi}(x)=0$ and
$\norm{\tilde{\xi}}_{C^{0}}\leq C\norm{J-\widetilde{J}}_{C^{0}(\mathbb{P}^{1},\,\tilde{g})}$ such that $v:=\exp_{u}(\tilde{\xi}):(\mathbb{P}^{1},\,j)\to(M,\,J)$
is a smooth $J$-holomorphic curve (in the same homology class as $u(\mathbb{P}^{1})$ with $x\in v(\mathbb{P}^{1})$).
\end{corollary}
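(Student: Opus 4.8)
The plan is to apply Theorem \ref{beauty} with the target K\"ahler structure $(g,\,J,\,\omega)$, treating $u$ as an \emph{approximate} $J$-holomorphic curve whose defect from being genuinely $J$-holomorphic is controlled by $\norm{J-\widetilde{J}}_{C^{0}}$. First I would estimate this defect. Since $u$ is $\widetilde{J}$-holomorphic, $\bar{\partial}_{\widetilde{J}}u=\frac{1}{2}(du+\widetilde{J}\circ du\circ j)=0$, and subtracting this from the definition of $\bar{\partial}_{J}u$ in \eqref{holomorphic} gives the pointwise identity $\bar{\partial}_{J}u=\frac{1}{2}(J-\widetilde{J})\circ du\circ j$. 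As $u$ is a fixed smooth map on the compact domain $\mathbb{P}^{1}$ and the metrics $g,\tilde{g}$ are uniformly comparable on $K$ by \eqref{difference1}, this yields $\norm{\bar{\partial}_{J}u}_{L^{p}}\le C\norm{J-\widetilde{J}}_{C^{0}}\norm{du}_{L^{p}}$, together with a bound $\norm{du}_{L^{p}}\le c_{0}$ independent of the perturbation.

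Next, and this is the crux, I would produce a right inverse $Q_{u}$ of the $J$-linearisation $D_{u}$ of \eqref{derivative} with norm bounded \emph{uniformly} in $\varepsilon$. The idea is to start from the $\widetilde{J}$-operator, for which $u$ is honestly holomorphic. As in the proof of Proposition \ref{local1}, triviality of the self-intersection forces the $\widetilde{J}$-holomorphic splitting $u^{*}TM\cong\mathcal{O}\oplus\mathcal{O}(2)$, whence $H^{0,\,1}_{\bar{\partial}}(\mathbb{P}^{1},u^{*}TM)=0$ and $D_{u}^{\widetilde{J}}=\bar{\partial}_{\widetilde{J}}$ is surjective. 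To obtain bijectivity --- and with it uniqueness and the normalisation $\tilde{\xi}(x)=0$ --- I would reduce the domain as indicated after Theorem \ref{beauty}: imposing the pointwise condition $\xi(z_{0})=0$, where $x=u(z_{0})$, which kills the normal constant deformations $H^{0}(\mathcal{O})$ and forces every solution to pass through $x$, together with further pointwise conditions pinning down the six-dimensional reparametrisation freedom coming from $H^{0}(\mathcal{O}(2))\cong\mathfrak{sl}(2,\mathbb{C})$, makes $D_{u}^{\widetilde{J}}$ an isomorphism onto $\mathcal{Y}_{u}^{p}$ with inverse $Q^{\widetilde{J}}$ of some fixed norm $c_{1}$. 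I would then transfer this to $D_{u}$: writing $D_{u}=D_{u}^{\widetilde{J}}+P$, the difference $P$ is a first-order operator whose coefficients are controlled by $|J-\widetilde{J}|_{\tilde{g}}$ and $|\widetilde{\nabla}(g-\tilde{g})|_{\tilde{g}}$ --- after identifying the two $(0,\,1)$-target bundles by the isomorphism close to the identity induced by $J\approx\widetilde{J}$ --- so that $\norm{P}_{\mathcal{X}_{u}^{p}\to\mathcal{Y}_{u}^{p}}=O(\varepsilon)$ by \eqref{difference1}. A Neumann series then gives the right inverse $Q_{u}=Q^{\widetilde{J}}(I+PQ^{\widetilde{J}})^{-1}$ of $D_{u}$, of norm at most $2c_{1}$, with $\im Q_{u}\subseteq\im Q^{\widetilde{J}}$ contained in the reduced domain.

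With $c_{0}:=\max\{2c_{1},\,\norm{du}_{L^{p}},\,\operatorname{vol}(\mathbb{P}^{1})\}$ fixed, Theorem \ref{beauty} supplies constants $\delta,c$; shrinking $\varepsilon$ so that $\norm{\bar{\partial}_{J}u}_{L^{p}}\le\delta$ and applying the theorem with $\xi=0\in\ker D_{u}$ produces $\tilde{\xi}=Q_{u}\eta$ in the reduced domain with $\bar{\partial}_{J}(\exp_{u}(\tilde{\xi}))=0$ and $\norm{\tilde{\xi}}_{W^{1,\,p}}\le c\norm{\bar{\partial}_{J}u}_{L^{p}}\le C'\norm{J-\widetilde{J}}_{C^{0}}$. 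Since $p>2$, the Sobolev embedding $W^{1,\,p}(\mathbb{P}^{1})\hookrightarrow C^{0}(\mathbb{P}^{1})$ upgrades this to the stated $C^{0}$-estimate; the constraint $\tilde{\xi}(z_{0})=0$ built into $\im Q_{u}$ gives $v(z_{0})=x$; and $v:=\exp_{u}(\tilde{\xi})$, being a $W^{1,\,p}$ solution of $\bar{\partial}_{J}v=0$ for the smooth structure $J$, is smooth by elliptic bootstrapping. Being $C^{0}$-close to $u$, the curve $v$ is homotopic to it and so lies in the same homology class, while uniqueness of $\tilde{\xi}$ within the reduced domain is precisely the uniqueness of the fixed point furnished by the contraction-mapping argument underlying Theorem \ref{beauty}.

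I expect the main obstacle to be the middle step: arranging a right inverse that is simultaneously bijective after incorporating the point constraint $\tilde{\xi}(x)=0$ and the reparametrisation-fixing conditions, and uniformly bounded as $(g,\,J)$ varies within the $\varepsilon$-neighbourhood of $(\tilde{g},\,\widetilde{J})$. The comparison of the two connections and the two $(0,\,1)$-projections needed to estimate $\norm{P}$, while routine, is exactly where \eqref{difference1} --- in particular the first-derivative term $|\widetilde{\nabla}(g-\tilde{g})|_{\tilde{g}}$ --- is genuinely used.
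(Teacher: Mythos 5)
Your proposal follows essentially the same route as the paper's proof: the defect identity $\bar{\partial}_{J}u=\frac{1}{2}(J-\widetilde{J})\circ du\circ j$ giving $\norm{\bar{\partial}_{J}u}_{L^{p}}\leq C\norm{J-\widetilde{J}}_{C^{0}(\mathbb{P}^{1},\,\tilde{g})}$; the reduction of the domain by pointwise vanishing conditions killing the kernel $H^{0}(\mathcal{O})\oplus H^{0}(\mathcal{O}(2))$ so that $\bar{\partial}_{\widetilde{J}}$ becomes an isomorphism (the paper imposes tangential vanishing at $x$, $z_{1}=0$, $z_{2}=0$ and normal vanishing at $x$, which is your point constraint plus reparametrisation-fixing, counted precisely); the transfer of invertibility from $\widetilde{D}_{u}$ to $D_{u}$ via \eqref{difference1}, where your Neumann series $Q_{u}=Q^{\widetilde{J}}(I+PQ^{\widetilde{J}})^{-1}$ is the quantitative form of the paper's appeal to openness of invertibility of bounded operators, your ``identification of the two $(0,\,1)$-target bundles close to the identity'' is exactly the paper's projection $\operatorname{pr}$, and the paper's estimate $\norm{\widetilde{D}_{u}-\operatorname{pr}\circ D_{u}}\leq C\norm{\nabla-\widetilde{\nabla}}_{C^{0}(\mathbb{P}^{1},\,\tilde{g})}$ is precisely where the first-derivative term of \eqref{difference1} enters, as you predicted; then Theorem \ref{beauty} with $\xi=0$, Sobolev embedding for the $C^{0}$ bound, and elliptic regularity for smoothness.

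The one place where your argument falls short of the stated corollary is the uniqueness clause. The fixed-point uniqueness underlying Theorem \ref{beauty} only gives uniqueness of $\tilde{\xi}$ within your reduced slice, namely among sections of the form $Q_{u}\eta$ satisfying, in addition to $\tilde{\xi}(x)=0$, the auxiliary tangential vanishing conditions at the extra marked points. A competing small section $\tilde{\xi}'$ with $\tilde{\xi}'(x)=0$ and $\exp_{u}(\tilde{\xi}')$ $J$-holomorphic need not satisfy those auxiliary constraints, so your argument does not rule it out. The paper closes this with a geometric step: uniqueness of $v$ follows from the triviality of the normal bundle of $v(\mathbb{P}^{1})$ together with positivity of intersections of complex subvarieties in a complex surface (\cite[Theorem 2.6.3]{dusa1}) --- two distinct irreducible curves in the class $[u(\mathbb{P}^{1})]$, which has self-intersection zero, passing through the common point $x$ would intersect positively, a contradiction. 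You would need to append this (or an equivalent argument) to upgrade slice-uniqueness to the uniqueness asserted in the statement; otherwise the proof is the paper's.
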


\begin{proof}
Let $\widetilde{F}_{u}$ denote the map \eqref{capitalf} corresponding to the data $(u,\,\tilde{g},\,\widetilde{J})$
and recall from the proof of Proposition \ref{local1} that the
linearisation $\widetilde{D}_{u}$ of $\widetilde{F}_{u}$ at $0$ with respect to $\widetilde{J}$ is
Fredholm of index $8$ and is precisely the Dolbeault $\bar{\partial}$-operator with respect to $\widetilde{J}$, namely
$$\widetilde{D}_{u}=\bar{\partial}:\Omega^{0}(u^{*}TM)\to\Omega_{\widetilde{J}}^{0,\,1}(u^{*}TM).$$
Via the direct sum decomposition $u^{*}TM=\mathcal{O}\oplus\mathcal{O}(2)$,
the kernel of $\widetilde{D}_{u}$ is spanned by $\{1,\,z_{1}^{2},\,z_{1}z_{2},\,z_{2}^{2}\}$
with $[z_{1}:z_{2}]$ homogeneous coordinates on $\mathbb{P}^{1}$.
Identifying $x$ with its pre-image under $u$, restrict $\widetilde{D}_{u}$ to the subspace $\Omega^{0}(u^{*}TM)_{(0)}$ of $\Omega^{0}(u^{*}TM)$ of smooth sections that vanish in the tangential directions at the points $x$, $z_{1}=0$, and $z_{2}=0$ on $\mathbb{P}^{1}$, and vanish in the normal direction at $x$. (If $z_{i}(x)=0$ for some $i=1,\,2$, then just choose an arbitrary point on $\mathbb{P}^{1}$ distinct from $z_{1}=0$ and $z_{2}=0$ for the sections to vanish.) Then the restriction $$\widetilde{D}^{(0)}_{u}:\Omega^{0}(u^{*}TM)_{(0)}\to\Omega_{\widetilde{J}}^{0,\,1}(u^{*}TM)$$ is an isomorphism. Fix $p>2$ and let $(\widetilde{\mathcal{X}}_{u}^{p})_{(0)}$ and $\widetilde{\mathcal{Y}}_{u}^{p}$ denote
the Sobolev completion of $\Omega^{0}(u^{*}TM)_{(0)}$ with respect to the $W^{1,\,p}$-norm
and the completion of $\Omega^{0}(\mathbb{P}^{1},\,\Lambda^{0,\,1}\otimes_{\tilde{J}}u^{*}TM)$
with respect to the $L^{p}$-norm induced by $\tilde{g}$ and the choice of K\"ahler metric on $\mathbb{P}^{1}$, respectively.
Then $\widetilde{D}^{(0)}_{u}$ defines an isomorphism $\widetilde{D}^{(0)}_{u}:(\widetilde{\mathcal{X}}_{u}^{p})_{(0)}\to\widetilde{\mathcal{Y}}_{u}^{p}$.

Next consider $\bar{\partial}_{J}u$.
Let $\mathcal{X}_{u}^{p}$ and $\mathcal{Y}_{u}^{p}$ be as in \eqref{spaces} defined with respect to $\omega$ and the choice of K\"ahler metric on $\mathbb{P}^{1}$
and let $(\mathcal{X}_{u}^{p})_{(0)}$ denote the Sobolev completion of $\Omega^{0}(u^{*}TM)_{(0)}$ with respect to the $W^{1,\,p}$-norm
induced by the aforementioned metrics. Then the linearisation $D_{u}$ defines a map
$$D_{u}:\mathcal{X}_{u}^{p}\to\mathcal{Y}_{u}^{p}$$
which we can restrict to $(\mathcal{X}_{u}^{p})_{(0)}$ and compose with the projection $\operatorname{pr}:\mathcal{Y}_{u}^{p}\to\widetilde{\mathcal{Y}}_{u}^{p}$
to obtain a map
$$(\operatorname{pr}\circ D_{u})^{(0)}:(\mathcal{X}_{u}^{p})_{(0)}\to\widetilde{\mathcal{Y}}_{u}^{p}.$$
Explicitly, the composition $\operatorname{pr}\circ D_{u}$ is given by
\begin{equation}\label{finished}
(\operatorname{pr}\circ D_{u})(\xi)=\frac{1}{2}\left(\nabla\xi+\widetilde{J}\nabla\xi\circ j\right).
\end{equation}
As clearly $(\mathcal{X}_{u}^{p})_{(0)}=(\widetilde{\mathcal{X}}_{u}^{p})_{(0)}$, we also have an isomorphism
$\widetilde{D}^{(0)}_{u}:(\mathcal{X}_{u}^{p})_{(0)}\to\widetilde{\mathcal{Y}}_{u}^{p}$. Thus,
from the openness of the invertibility of bounded linear operators, we know that there exists $\delta>0$ such that
$\norm{\widetilde{D}^{(0)}_{u}-(\operatorname{pr}\circ D_{u})^{(0)}}<\delta$
implies the invertibility of $(\operatorname{pr}\circ D_{u})^{(0)}$. In light of \eqref{derivative} and \eqref{finished},
we estimate that $$\norm{\widetilde{D}_{u}-\operatorname{pr}\circ D_{u}}\leq C\norm{\nabla-\widetilde{\nabla}}_{C^{0}(\mathbb{P}^{1},\,\tilde{g})},$$
and so $(\operatorname{pr}\circ D_{u})^{(0)}$ is invertible if \eqref{difference1}
holds true for $\varepsilon>0$ sufficiently small.
Moreover, if \linebreak$\norm{J-\widetilde{J}}_{C^{0}(\mathbb{P}^{1},\,\tilde{g})}$ is sufficiently small, then
$\operatorname{pr}$ is an isomorphism. Hence, by shrinking $\varepsilon>0$ further if necessary, we can assert that
the restricted map
$$D_{u}^{(0)}:(\mathcal{X}_{u}^{p})_{(0)}\to\mathcal{Y}_{u}^{p}$$
is itself an isomorphism. As
$$\norm{\bar{\partial}_{J}u}_{L^{p}}\leq C\biggl(\norm{\bar{\partial}_{J}u-\bar{\partial}_{\widetilde{J}}u}_{L^{p}}+\underbrace{\norm{\bar{\partial}_{\widetilde{J}}u}_{L^{p}}}_{=\,0}\biggr)\leq C\norm{J-\widetilde{J}}_{C^{0}(\mathbb{P}^{1},\,\tilde{g})},$$
control on $\|J-\widetilde{J}\|_{C^{0}(\mathbb{P}^{1},\,\tilde{g})}$ allows us to
assume that $\norm{\bar{\partial}_{J}u}_{L^{p}}$ is as small as we please. Therefore, applying
Theorem \ref{beauty} with $\xi=0$, we deduce that for all $\varepsilon>0$ sufficiently small, there exists a unique section
$\tilde{\xi}\in(\mathcal{X}_{u}^{p})_{(0)}$ such that the map
$v:=\exp_{u}(\tilde{\xi})$ is $J$-holomorphic and
$\norm{\tilde{\xi}}_{W^{1,\,p}}\leq C\norm{\bar{\partial}_{J}u}_{L^{p}}$. Thus,
\begin{equation*}
\norm{\tilde{\xi}}_{W^{1,\,p}}\leq C\norm{\bar{\partial}_{J}u}_{L^{p}}\leq C\norm{J-\widetilde{J}}_{C^{0}(\mathbb{P}^{1},\,\tilde{g})}.
\end{equation*}
The desired estimate on $\tilde{\xi}$ now follows from Sobolev embedding.
The fact that $v$ is smooth follows from elliptic regularity and the smoothness of $J$
\cite[Proposition 3.1.9]{dusa1}. By construction, $\tilde{\xi}(x)=0$ so that $x\in v(\mathbb{P}^{1})$,
and $v(\mathbb{P}^{1})$ lies in the same homology class as $u(\mathbb{P}^{1})$, hence $v$ has the required properties.
Finally, uniqueness of $v$ is a consequence of the triviality of the normal bundle of $v(\mathbb{P}^{1})$ and the positivity of intersections of complex subvarieties
in a complex surface \cite[Theorem 2.6.3]{dusa1}.
\end{proof}

The next corollary is reminiscent of \cite[Theorem 5]{kodaira1}.

\begin{corollary}[Deformation of $(-1)$-curves]\label{curve2}
Let $M$ be a manifold of real dimension $4$, let $(g,\,J)$ and $(\tilde{g},\,\widetilde{J})$ be two K\"ahler structures on $M$, let $p>2$, and
let $u:(\mathbb{P}^{1},\,j)\to(M,\,\widetilde{J})$ be a smooth $\widetilde{J}$-holomorphic $(-1)$-curve.
Denote the Levi-Civita connection of $\tilde{g}$ by $\widetilde{\nabla}$. Then there exists $\varepsilon>0$ such that if
\begin{equation*}
|g-\tilde{g}|_{\tilde{g}}+|\widetilde{\nabla}(g-\tilde{g})|_{\tilde{g}}+|J-\widetilde{J}|_{\tilde{g}}<\varepsilon
\end{equation*}
on some sufficiently large compact subset $K\subset M$ containing $u(\mathbb{P}^{1})$, then there exists a unique
smooth section $\tilde{\xi}\in\Gamma(u^{*}TM)$ with $\norm{\tilde{\xi}}_{C^{0}}\leq C\norm{J-\widetilde{J}}_{C^{0}(\mathbb{P}^{1},\,\tilde{g})}$ such that $v:=\exp_{u}(\tilde{\xi}):(\mathbb{P}^{1},\,j)\to(M,\,J)$ is a smooth $J$-holomorphic $(-1)$-curve (in the same homology class as $u(\mathbb{P}^{1})$).
\end{corollary}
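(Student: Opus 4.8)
The plan is to mimic the proof of Corollary \ref{curve} almost line for line; the only structural change is the holomorphic type of $u^{*}TM$ and, consequently, the gauge conditions one imposes to turn the linearisation into an isomorphism. Since $u$ is an embedded $\widetilde{J}$-holomorphic sphere of self-intersection $-1$, its normal bundle has degree $-1$, so $u^{*}TM\cong T\P^{1}\oplus N=\mathcal{O}(2)\oplus\mathcal{O}(-1)$; this splitting is holomorphic because $\operatorname{Ext}^{1}(\mathcal{O}(-1),\mathcal{O}(2))=H^{1}(\P^{1},\mathcal{O}(3))=0$. Exactly as in Proposition \ref{local1}, the linearisation $\widetilde{D}_{u}$ of $\widetilde{F}_{u}$ at $0$ is the Dolbeault operator $\bar{\partial}$ with respect to $\widetilde{J}$, and it respects this splitting, so I can analyse each summand separately.

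First I would record the cohomology. By Kodaira--Serre duality, $H^{0,1}_{\bar{\partial}}(\P^{1},\mathcal{O}(2))\cong H^{0}(\P^{1},\mathcal{O}(-4))^{*}=0$ and $H^{0,1}_{\bar{\partial}}(\P^{1},\mathcal{O}(-1))\cong H^{0}(\P^{1},\mathcal{O}(-1))^{*}=0$, so $\widetilde{D}_{u}=\bar{\partial}$ is \emph{surjective}; this is the point that allows Theorem \ref{beauty} to be invoked even though the normal bundle is negative. On the other hand $\ker\widetilde{D}_{u}=H^{0}(\P^{1},\mathcal{O}(2))\oplus H^{0}(\P^{1},\mathcal{O}(-1))=H^{0}(\P^{1},\mathcal{O}(2))$, which is three complex dimensions, \emph{all} of them tangential, coming from $H^{0}(\P^{1},T\P^{1})$, i.e.\ the reparametrisation directions. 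In contrast to the self-intersection-zero case of Corollary \ref{curve}, there are no normal deformations: the $(-1)$-curve is infinitesimally rigid, in the spirit of \cite[Theorem 5]{kodaira1}.

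The gauge-fixing is therefore simpler. Choose three distinct points $p_{1},p_{2},p_{3}\in\P^{1}$ and let $\Omega^{0}(u^{*}TM)_{(0)}$ be the subspace of sections vanishing in the tangential direction at $p_{1},p_{2},p_{3}$; since a section of $\mathcal{O}(2)$ with three zeros vanishes, $\widetilde{D}^{(0)}_{u}$ is injective, and as its Fredholm index is $6-6=0$ it is an isomorphism. Crucially, no normal vanishing condition is imposed, which is why—unlike Corollary \ref{curve}—the conclusion carries no constraint $\tilde{\xi}(x)=0$. From here the argument of Corollary \ref{curve} transfers with only cosmetic changes: openness of invertibility upgrades the isomorphism $\widetilde{D}^{(0)}_{u}$ to invertibility of $(\operatorname{pr}\circ D_{u})^{(0)}$ once \eqref{difference1} holds with $\varepsilon$ small (using \eqref{derivative} and \eqref{finished} to bound $\norm{\widetilde{D}_{u}-\operatorname{pr}\circ D_{u}}$ by $\norm{\nabla-\widetilde{\nabla}}_{C^{0}}$), smallness of $\norm{J-\widetilde{J}}_{C^{0}}$ makes $\operatorname{pr}$ an isomorphism so that $D^{(0)}_{u}\colon(\mathcal{X}^{p}_{u})_{(0)}\to\mathcal{Y}^{p}_{u}$ is an isomorphism with a right inverse of controlled norm, and the bound $\norm{\bar{\partial}_{J}u}_{L^{p}}\leq C\norm{J-\widetilde{J}}_{C^{0}(\P^{1},\tilde{g})}$ lets me apply Theorem \ref{beauty} with $\xi=0$. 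This yields a unique small section $\tilde{\xi}\in(\mathcal{X}^{p}_{u})_{(0)}$ with $\exp_{u}(\tilde{\xi})$ $J$-holomorphic and $\norm{\tilde{\xi}}_{W^{1,p}}\leq C\norm{J-\widetilde{J}}_{C^{0}(\P^{1},\tilde{g})}$; Sobolev embedding gives the claimed $C^{0}$ estimate, and elliptic regularity together with smoothness of $J$ (\cite[Proposition 3.1.9]{dusa1}) gives smoothness of $\tilde{\xi}$ and hence of $v:=\exp_{u}(\tilde{\xi})$.

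It remains to verify that $v$ is a $(-1)$-curve and that it is unique, and this is where the negative self-intersection is exploited rather than avoided. Since $[v(\P^{1})]=[u(\P^{1})]$, we have $v\cdot v=u\cdot u=-1$; moreover $v$ cannot be multiply covered (a $d$-fold cover would give $v\cdot v=d^{2}(v'\cdot v')$, incompatible with $v\cdot v=-1$ for $d>1$), so $v$ is simple, and it is embedded by McDuff's adjunction formula \cite[Corollary E.1.7]{dusa1} (equivalently, as a $C^{0}$-small perturbation of the embedding $u$). Thus $v$ is an embedded $J$-holomorphic $(-1)$-curve in the class $[u]$. For uniqueness I expect the decisive input to be positivity of intersections \cite[Theorem 2.6.3]{dusa1}: any two distinct simple $J$-holomorphic spheres meet with non-negative intersection number, whereas two distinct curves in the class $[u]$ would intersect in $u\cdot u=-1<0$. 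Hence the image $v(\P^{1})$ is the \emph{unique} $J$-holomorphic curve in its class, and the gauge conditions defining $(\mathcal{X}^{p}_{u})_{(0)}$ then pin down the section $\tilde{\xi}$ itself. The main obstacle is thus conceptual rather than computational: one must recognise that the rigidity of a $(-1)$-curve simultaneously supplies surjectivity of $\bar{\partial}$ (so a deformation exists), vanishing of $H^{0}$ of the normal bundle (so no normal gauge condition is needed), and—through positivity of intersections applied to a \emph{negative} self-intersection—uniqueness of the resulting curve.
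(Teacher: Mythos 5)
Your proposal is correct and follows essentially the same route as the paper's own proof: the holomorphic splitting $u^{*}TM=\mathcal{O}(2)\oplus\mathcal{O}(-1)$, the identification of the kernel of $\widetilde{D}_{u}=\bar{\partial}$ as the purely tangential three-dimensional space $H^{0}(\mathbb{P}^{1},\mathcal{O}(2))$ (spanned by $z_{1}^{2},\,z_{1}z_{2},\,z_{2}^{2}$), gauge-fixing by tangential vanishing at three points to make the restricted operator an isomorphism, and then running the argument of Corollary \ref{curve} verbatim through Theorem \ref{beauty}. The only (harmless) divergence is at the end: the paper explicitly drops the final positivity-of-intersections sentence of Corollary \ref{curve} and takes uniqueness of $\tilde{\xi}$ from the implicit-function-theorem step in the gauge-fixed space, whereas you additionally establish curve-level uniqueness from $[u]\cdot[u]=-1<0$ together with positivity of intersections --- a correct strengthening rather than a gap.
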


\begin{proof}
Let $\widetilde{F}_{u}$ denote the map \eqref{capitalf} corresponding to the data $(u,\,\tilde{g},\,\widetilde{J})$
and recall from the proof of Proposition \ref{local1} that the
linearisation $\widetilde{D}_{u}$ of $\widetilde{F}_{u}$ at $0$ with respect to $\widetilde{J}$ is
precisely the Dolbeault $\bar{\partial}$-operator with respect to $\widetilde{J}$, namely
$$\widetilde{D}_{u}=\bar{\partial}:\Omega^{0}(u^{*}TM)\to\Omega_{\widetilde{J}}^{0,\,1}(u^{*}TM).$$
This is Fredholm of index $6$ (cf.~the proof of Proposition \ref{local1}) and
via the direct sum decomposition $u^{*}TM=\mathcal{O}(-1)\oplus\mathcal{O}(2)$,
the kernel of $\widetilde{D}_{u}$ is spanned by $\{z_{1}^{2},\,z_{1}z_{2},\,z_{2}^{2}\}$
with $[z_{1}:z_{2}]$ homogeneous coordinates on $\mathbb{P}^{1}$.
Restrict $\widetilde{D}_{u}$ to the subspace $\Omega^{0}(u^{*}TM)_{(1)}$ of $\Omega^{0}(u^{*}TM)$ of smooth sections that vanish in the
tangential directions at the points $z_{1}=0$, $z_{2}=0$, and at an arbitrary point of $\mathbb{P}^{1}$
distinct from $z_{1}=0$ and $z_{2}=0$. Then the restriction $$\widetilde{D}^{(1)}_{u}:\Omega^{0}(u^{*}TM)_{(1)}\to\Omega_{\widetilde{J}}^{0,\,1}(u^{*}TM)$$ defines an isomorphism.
The proof now proceeds verbatim as that of Corollary \ref{curve} without the last sentence.
\end{proof}

\subsection{Shrinking Ricci solitons}\label{intro2}

The metrics we are interested in are the following.
\begin{definition}
A \emph{shrinking Ricci soliton} is a triple $(M,\,g,\,X)$, where $M$ is a Riemannian manifold endowed with a complete Riemannian metric $g$
and a vector field $X$ satisfying the equation
\begin{equation}\label{soliton1}
\Ric(g)+\frac{1}{2}\mathcal{L}_{X}g=\frac{1}{2}g.
\end{equation}
We call $X$ the \emph{soliton vector field} and say that $(M,\,g,\,X)$
is a \emph{gradient} Ricci soliton if $X=\nabla^{g} f$ for some real-valued smooth function $f$ on $M$.
In this latter case, equation \eqref{soliton1} reduces to
\begin{equation*}
\Ric(g)+\operatorname{Hess}_{g}(f)=\frac{1}{2}g,
\end{equation*}
where $\operatorname{Hess}_{g}$ denotes the Hessian with respect to $g$.

If $g$ is complete and K\"ahler with K\"ahler form $\omega$, then we say that $(M,\,g,\,X)$ is a \emph{shrinking gradient K\"ahler-Ricci soliton} if
$X=\nabla^{g} f$ for some real-valued smooth function $f$ on $M$, $X$ is complete and real holomorphic, and
\begin{equation}\label{krseqn}
\rho_{\omega}+i\partial\bar{\partial}f=\omega,
\end{equation}
where $\rho_{\omega}$ is the Ricci form of $\omega$. For gradient Ricci solitons and
gradient K\"ahler-Ricci solitons, the function $f$ satisfying $X=\nabla^g f$ is called the \emph{soliton potential}.
\end{definition}

As the next result shows, the soliton potential of a complete non-compact shrinking gradient Ricci soliton grows quadratically with respect to the distance.

\begin{theorem}[{\cite[Theorem 1.1]{caoo}}]\label{theo-basic-prop-shrink}
Let $(M,\,g,\,X)$ be a complete non-compact shrinking gradient Ricci soliton with soliton vector field $X=\nabla^{g}f$ for a smooth real-valued function $f:M\to\mathbb{R}$.
Then for $x\in M$, $f$ satisfies the estimates
$$\frac{1}{4}(d_{g}(p,\,x)-c_{1})^{2}-C\leq f(x)\leq\frac{1}{4}(d_{g}(p,\,x)+c_{2})^{2}$$
for some $C>0$, where $d_{g}(p,\,\cdot)$ denotes the distance to a fixed point $p\in M$ with respect to $g$.
Here, $c_{1}$ and $c_{2}$ are positive constants depending only on the real dimension of $M$ and
the geometry of $g$ on the unit ball $B_{p}(1)$ based at $p$.
\end{theorem}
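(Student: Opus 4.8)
The plan is to exploit the two standard structural identities of a shrinking gradient Ricci soliton and thereby reduce the whole statement to a single differential inequality for $f$ along minimizing geodesics. First I would record the elementary consequences of the soliton equation $\Ric(g)+\operatorname{Hess}(f)=\frac{1}{2}g$ with $n=\dim_{\mathbb{R}}M$. Tracing gives $\RR_g+\Delta f=\frac{n}{2}$. Combining the contracted second Bianchi identity with the soliton equation yields $\nabla\RR_g=2\Ric(\nabla f)$, and from this one deduces that $\RR_g+|\nabla f|^2-f$ is constant; after absorbing the constant into $f$ (permissible, as $f$ is defined only up to an additive constant) one arrives at the key identity
$$\RR_g+|\nabla f|^2=f.$$
I would also invoke the fact that the scalar curvature of a complete shrinking gradient Ricci soliton is nonnegative, so that $|\nabla f|^2\le f$ and in particular $f\ge0$ everywhere; moreover $f$ attains a minimum at some point $p$, which I take as the basepoint.

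For the upper bound, the inequality $|\nabla f|^2\le f$ says precisely that $|\nabla\sqrt{f}|\le\frac{1}{2}$ wherever $f>0$. Integrating this Lipschitz bound along a minimizing geodesic from $p$ to $x$ gives $\sqrt{f(x)}\le\sqrt{f(p)}+\frac{1}{2}d_g(p,x)$, which squares to the stated upper bound $f(x)\le\frac{1}{4}(d_g(p,x)+c_2)^2$ with $c_2$ depending only on $f(p)$, hence on the geometry near $p$.

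The lower bound is the substantive step. Along a unit-speed minimizing geodesic $\gamma:[0,r]\to M$ from $p$, the soliton equation gives $\frac{d^2}{ds^2}(f\circ\gamma)=\operatorname{Hess}(f)(\dot\gamma,\dot\gamma)=\frac{1}{2}-\Ric(\dot\gamma,\dot\gamma)$, so integrating once yields
$$\langle\nabla f,\dot\gamma\rangle(r)=\langle\nabla f,\dot\gamma\rangle(0)+\frac{r}{2}-\int_0^r\Ric(\dot\gamma,\dot\gamma)\,ds.$$
The crux is to bound $\int_0^r\Ric(\dot\gamma,\dot\gamma)\,ds$ from above by a constant independent of $r$. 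This is exactly where minimality of $\gamma$ enters: choosing a parallel orthonormal frame $\{e_i\}$ along $\gamma$ with $e_n=\dot\gamma$ and feeding the fields $V_i=\phi\,e_i$ (for a cutoff $\phi$ vanishing at the endpoints) into the second variation inequality, then summing over $i=1,\dots,n-1$, yields
$$\int_0^r\phi^2\,\Ric(\dot\gamma,\dot\gamma)\,ds\le(n-1)\int_0^r(\phi')^2\,ds.$$
A judicious choice of $\phi$, equal to $1$ away from the two unit-length end segments, then bounds $\int_0^r\Ric(\dot\gamma,\dot\gamma)\,ds$ by a constant of the form $(n-1)+C\max_{B_p(1)}|\Ric|$, i.e. a constant determined only by $n$ and the geometry of $g$ on $B_p(1)$. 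Plugging this back in gives $\langle\nabla f,\dot\gamma\rangle(s)\ge\frac{s}{2}-c_1$ for $s$ large, and integrating once more produces the quadratic lower bound $f(x)\ge\frac{1}{4}(d_g(p,x)-c_1)^2-C$.

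I expect the index-form estimate controlling $\int_0^r\Ric(\dot\gamma,\dot\gamma)\,ds$ to be the main obstacle, since it is the only place where one must go beyond pointwise manipulation of the soliton identities and genuinely use completeness together with the minimizing property of the geodesic; the choice of cutoff vector fields in the index form is precisely what produces the dimensional constant and the dependence on the unit ball $B_p(1)$ asserted in the statement.
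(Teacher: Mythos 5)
Your outline is the proof of Cao--Zhou \cite{caoo} that the paper cites for this statement (the paper itself gives no proof), and the first half is fine: the normalised identity $\RR_{g}+|\nabla f|^{2}=f$, Chen's $\RR_{g}\geq0$, and the resulting bound $|\nabla\sqrt{f}|\leq\frac{1}{2}$ give the upper estimate exactly as you say. One cosmetic remark: you should not take $p$ to be a minimum point of $f$, since the existence of a minimum is a consequence of the properness you are in the middle of proving; fortunately nothing in your argument needs it, as any fixed basepoint works with $f(p)$ and $|\nabla f|(p)$ absorbed into the constants.

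The genuine gap is in the lower bound, at exactly the step you flagged. The second variation inequality with a cutoff $\phi$ vanishing at both endpoints controls only $\int_{0}^{r}\phi^{2}\Ric(\dot\gamma,\dot\gamma)\,ds\leq(n-1)\int_{0}^{r}(\phi')^{2}ds$, and undoing the cutoff costs you $\int(1-\phi^{2})\Ric(\dot\gamma,\dot\gamma)\,ds$ over the two unit end segments. The segment near $p$ is indeed bounded by $\max_{B_{p}(1)}|\Ric|$, but the segment near $x$ is not: there is no a priori curvature control there, so your claimed bound $\int_{0}^{r}\Ric(\dot\gamma,\dot\gamma)\,ds\leq(n-1)+C\max_{B_{p}(1)}|\Ric|$ simply does not follow from the index form, and the argument as written collapses at this point. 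The repair (and what Cao--Zhou actually do) is to substitute the soliton equation $\Ric(\dot\gamma,\dot\gamma)=\frac{1}{2}-h''$ on the far segment, where $h:=f\circ\gamma$, and integrate by parts: with $\phi(s)=r-s$ on $[r-1,\,r]$ one gets
\begin{equation*}
\int_{r-1}^{r}(1-\phi^{2})\Ric(\dot\gamma,\dot\gamma)\,ds\;\leq\;\frac{1}{2}-h'(r)+2\int_{r-1}^{r}\phi\,h'\,ds.
\end{equation*}
Feeding this into $h'(r)=h'(0)+\frac{r}{2}-\int_{0}^{r}\Ric(\dot\gamma,\dot\gamma)\,ds$, the $h'(r)$ terms cancel and one obtains $2\int_{r-1}^{r}\phi\,h'\,ds\geq\frac{r}{2}-C_{0}$ with $C_{0}$ depending only on $n$, $\max_{B_{p}(1)}|\Ric|$, and $|\nabla f|(p)$. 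Since $2\int_{r-1}^{r}\phi\,ds=1$, there is $s^{*}\in[r-1,\,r]$ with $h'(s^{*})\geq\frac{r}{2}-C_{0}$, hence $\sqrt{f(\gamma(s^{*}))}\geq|\nabla f|(\gamma(s^{*}))\geq\frac{r}{2}-C_{0}$, and the Lipschitz bound $|\nabla\sqrt{f}|\leq\frac{1}{2}$ from your first half transfers this to $x$, giving $\sqrt{f(x)}\geq\frac{r}{2}-C_{0}-\frac{1}{2}$ and the stated quadratic lower bound. Note that the correct proof never establishes a constant bound on $\int_{0}^{r}\Ric(\dot\gamma,\dot\gamma)\,ds$ at all: the cancellation of $h'(r)$ and the use of the already-proven gradient bound are the additional ideas your sketch is missing.
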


\noindent In particular, $f$ is proper.

We also know the following regarding the asymptotics of four-dimensional shrinking gradient Ricci solitons.

\begin{lemma}\label{batman}
Let $(M,\,g,\,X)$ be a complete non-compact shrinking gradient Ricci soliton of real dimension $4$ with soliton vector field $X=\nabla^{g}f$ for a smooth real-valued function $f:M\to\mathbb{R}$
and with bounded scalar curvature $\RR_{g}$ such that $\RR_{g}\to0$ along every integral curve of $X$. Then
$\RR_{g}\to0$. Moreover, there exists a constant $C>0$ such that $0\leq\RR_{g}\leq Cf^{-1}$ outside a sufficiently large compact subset of $M$.
\end{lemma}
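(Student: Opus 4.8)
The plan is to derive everything from the standard shrinker identities together with the fact that, in this four–dimensional bounded–scalar–curvature setting, the full curvature is controlled. First I would normalise the potential so that $\RR_g+|\nabla^g f|^2=f$, and record the companion identities $\Delta f=\tfrac{n}{2}-\RR_g=2-\RR_g$ and $\nabla\RR_g=2\Ric(\nabla^g f)$, together with the weighted versions $\Delta_{f}f=2-f$ and $\Delta_{f}\RR_g=\RR_g-2|\Ric|^2$, where $\Delta_{f}:=\Delta-\nabla^g f\cdot\nabla^g$. Since $M$ is four-dimensional with bounded $\RR_g$, by \cite{wang22} the full curvature tensor is bounded, and Shi-type estimates for the associated self-similar flow then bound all covariant derivatives of curvature; in particular $|\Ric|$, $|\nabla^g\RR_g|$ and $|\nabla^{g,2}\RR_g|$ are bounded. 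I would also use that $\RR_g\ge 0$ on any complete shrinking soliton, and that (by Theorem \ref{theo-basic-prop-shrink}) $f$ is proper with $|\nabla^g f|^2=f-\RR_g\to\infty$, so that outside a compact set $f$ has no critical points and $\{f\ge t\}$ is smoothly foliated by the integral curves of $X=\nabla^g f$, each escaping to spatial infinity.

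The key reduction is that the quantitative bound implies the qualitative one: once $0\le\RR_g\le Cf^{-1}$ holds outside a compact set, properness of $f$ gives $\RR_g\to 0$ immediately, so it suffices to prove the decay estimate (the lower bound being nonnegativity of $\RR_g$). The natural barrier is $f^{-1}$: a direct computation gives $\Delta_{f}(f^{-1})=f^{-1}-2\RR_g f^{-3}$, so $Cf^{-1}$ is, up to a favourable lower-order term, a solution of the very equation $\Delta_{f}u=u$ that governs the linear part of $\Delta_{f}\RR_g=\RR_g-2|\Ric|^2$. This singles out $f^{-1}\sim d_g(p,\cdot)^{-2}$ as the expected rate and is consistent with the Feldman--Ilmanen--Knopf model, the only non-flat soliton admitted in this case.

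The heart of the argument is to run this comparison, and here the flow-line hypothesis enters decisively: it is exactly the input forcing $\RR_g\to 0$ at infinity, thereby excluding the cylinder $\mathbb{C}\times\mathbb{P}^{1}$, on which $\RR_g$ is a positive constant and stays bounded below along the integral curves of $X$. I would exploit the favourable nonlinear term, using $|\Ric|^2\ge\tfrac14\RR_g^2$ in real dimension four so that $\Delta_{f}\RR_g\le\RR_g-\tfrac12\RR_g^2$, and combine it with the vanishing of $\RR_g$ at infinity supplied by the hypothesis along the foliation of $\{f\ge t\}$. Concretely, I would set up a weighted maximum principle / Moser iteration on the dyadic shells $\{2^{k}\le f\le 2^{k+1}\}$: the coercive term $-\tfrac12\RR_g^2$ controls the nonlinearity, the inner boundary data on a fixed level set $\{f=t_0\}$ are bounded because $\RR_g$ is bounded, and the outer behaviour is pinned down by the hypothesis; the Cao--Zhou weighted-volume comparison for the measure $e^{-f}\,dV_g$ then converts the per-curve decay into the uniform bound $\sup_{\{f\sim t\}}\RR_g\le C/t$, that is $\RR_g\le Cf^{-1}$.

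I expect the main obstacle to be precisely the sign structure of $\Delta_{f}\RR_g=\RR_g-2|\Ric|^2$: the linear part carries the resonant coefficient that defeats a naive maximum principle, since both $\RR_g$ and $Cf^{-1}$ are subsolutions of $\Delta_{f}u=u$, the difference satisfying $\Delta_{f}(\RR_g-Cf^{-1})-(\RR_g-Cf^{-1})=-2|\Ric|^2+2C\RR_g f^{-3}$, which is not sign-definite a priori. The decay therefore cannot come from the linear comparison alone; it must be extracted from the curvature term $-2|\Ric|^2$ together with the flow-line hypothesis, whose role is to rule out persistence of $\RR_g$ at infinity. Establishing the requisite interior gradient and Hessian control of $\RR_g$ uniformly across the shells—so that the iteration closes—is the delicate point, and is exactly where the boundedness of the full curvature and its derivatives coming from \cite{wang22} is indispensable.
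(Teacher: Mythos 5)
Your algebra is sound (the identities $\Delta_{f}\RR_{g}=\RR_{g}-2|\Ric|^{2}$, $\Delta_{f}(f^{-1})=f^{-1}-2\RR_{g}f^{-3}$, $|\Ric|^{2}\geq\tfrac14\RR_{g}^{2}$ are all correct), and you correctly diagnose the obstruction: the resonant linear coefficient makes both $\RR_{g}$ and $f^{-1}$ behave like solutions of $\Delta_{f}u=u$, so no naive comparison works. But the proposal stops exactly where the proof has to start: you never exhibit a mechanism converting the hypothesis --- pointwise decay of $\RR_{g}$ along each integral curve, with \emph{no rate and no uniformity across curves} --- into a uniform input for the shell iteration. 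This is not a technical loose end but the whole content of the lemma: the cylinder $\mathbb{C}\times\mathbb{P}^{1}$ satisfies every ingredient you propose to iterate with ($\RR_{g}\geq0$ constant, bounded curvature, $\Delta_{f}\RR_{g}=\RR_{g}-2|\Ric|^{2}$ holding with $|\Ric|^{2}=\tfrac12\RR_{g}^{2}$, weighted volume comparison), so no barrier or Moser argument built from these alone can produce decay. Your only invocation of the hypothesis, ``the outer behaviour is pinned down by the hypothesis,'' is not an argument: on a dyadic shell $\{2^{k}\leq f\leq 2^{k+1}\}$ the hypothesis yields no uniform bound at all, since each integral curve becomes small at its own, a priori unbounded, height. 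You flag this yourself as ``the delicate point,'' but flagging it is not resolving it, and as written the iteration does not close.

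For comparison, the paper's proof is much softer and imports the hard quantitative step rather than reproving it: by \cite[Theorem 1.3]{wang22} the full curvature is bounded so that \cite[Theorem 3.1]{wang22} applies, and the Harnack estimate \cite[equation (3.73)]{wang22} states that once $\RR_{g}$ falls below a fixed threshold at a point of a level set $\{f=t_{1}\}$ with $\{X=0\}\subset f^{-1}((-\infty,\,t_{1}])$, then $\RR_{g}\leq Cf^{-1}$ along the entire integral curve through that point, with $C$ independent of the point. The actual new content of the lemma is then a compactness/covering argument that your proposal lacks entirely: for each $x_{0}$ in a fixed compact level set $\{f=t_{0}\}$ beyond the (compact) zero set of $X$, the hypothesis gives a height $t_{0}'$ at which $\RR_{g}<\varepsilon$ at a point of the forward orbit, hence $\RR_{g}<2\varepsilon$ on an open neighbourhood in $\{f=t_{0}'\}$, which is flowed back to a neighbourhood of $x_{0}$ in $\{f=t_{0}\}$; finitely many such neighbourhoods cover $\{f=t_{0}\}$, and taking $t_{1}$ the maximum of the finitely many heights yields $\RR_{g}<2\varepsilon$ on the single level set $\{f=t_{1}\}$, whereupon the Harnack estimate propagates the $Cf^{-1}$ decay globally (and $\RR_{g}\to0$ follows, reversing your reduction, which is fine). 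To salvage your PDE route you would in effect have to reprove \cite[equation (3.73)]{wang22} --- the threshold-triggered decay along flow lines --- and you would still need the finite-cover step to obtain a level set on which the threshold is met uniformly.
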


\begin{proof}
On a shrinking gradient Ricci soliton of real dimension $4$ with bounded scalar curvature $\RR_g$, we see from \cite[Theorem 1.3]{wang22} that the bounds \cite[equation (3.4)]{wang22} hold true so that \cite[Theorem 3.1]{wang22} applies. The Harnack estimate from \cite[equation (3.73)]{wang22} then implies that if $\RR_g$ is strictly
smaller than the constant in this Harnack estimate at some point $x$ in the level set $\{f=t_1\}$ for $t_1\in\mathbb{R}$ with $\{X=0\}\subset f^{-1}((-\infty,\,t_{1}])$, then $\RR_g$ decays like $Cf^{-1}$ along the integral curve passing through $x$ for some constant $C>0$ independent of $x$. Thus for the first assertion, it suffices to show that there exists $t_1\in\mathbb{R}$ with $\{X=0\}\subset f^{-1}((-\infty,\,t_{1}])$ so that $\RR_g$ is as small as we please on $\{f=t_1\}$.

To this end, note that since $\RR_g$ is bounded, the zero set of $X$ is compact (cf.~\cite[Proof of Lemma 2.26]{cds}), hence by properness of $f$ (cf.~Theorem \ref{theo-basic-prop-shrink}),
there exists $t_0>0$ so that $\{X=0\}\subset f^{-1}((-\infty,\,\frac{t_{0}}{2}])$. Through the gradient flow of $f$, the level sets $\{f=t\}$ are therefore diffeomorphic to $\{f=t_0\}$ for all $t>t_0$. In particular, all integral curves of $X$ may be parametrised by $\{f=t_0\}$. Let $x_{0}\in\{f=t_0\}$ and choose $\varepsilon>0$. Then since $\RR_{g}\to0$ along each integral curve of $X$ by assumption and $\RR_{g}\geq0$ \cite{Zhang-Com-Ricci-Sol}, there exists $x'_{0}$ lying along the integral curve of $X$ passing through $x_{0}$ with $f(x'_{0}):=t'_{0}>t_{0}$ such that $0\leq\RR_g(x'_{0}) <\varepsilon$. We can then find an open neighbourhood of $x'_{0}$ in $\{f=t'_{0}\}$ such that $0\leq\RR_g<2\varepsilon$. Flowing this neighbourhood back to $x_0$ along $-X$, we obtain an open neighbourhood $U_{0}$ of $x_0$ in $\{f=t_0\}$. By properness of $f$, the level set $\{f=t_0\}$ is compact and so can be covered by finitely many such neighbourhoods $U_{i},i=0,\ldots,N$. Letting $t_{1}$ denote the maximum of the corresponding $t'_{i},i=0,\ldots,N$, we find that $0\leq\RR_g<2\varepsilon$ on $\{f=t_1\}$ and $\{X=0\}\subset f^{-1}((-\infty,\,t_{1}])$, as required. By \cite[equation (3.73)]{wang22}, it now follows that $\RR_g$ decays globally like $Cf^{-1}$.
\end{proof}

Complex two-dimensional complete non-compact shrinking gradient K\"ahler-Ricci solitons with scalar curvature tending to zero at infinity were classified in
\cite[Theorem E(3)]{cds}. They comprise the flat Gaussian soliton on $\mathbb{C}^{2}$ and the example of Feldman-Ilmanen-Knopf \cite{FIK} on the blowup of $\mathbb{C}^{2}$ at one point, up to the action of $GL(2,\,\mathbb{C})$.

\subsection{Polyhedrons and polyhedral cones}\label{pooly}

We take the following from \cite{cox} and \cite[Appendix A]{wu}.

Let $E$ be a real vector space of dimension $n$ and let $E^{*}$ denote the dual. Write $\langle\cdot\,,\,\cdot\rangle$ for the evaluation $E^{*}\times E\to\mathbb{R}$. Furthermore, assume that we are given a \emph{lattice} $\Gamma \subset E$, that is, an additive subgroup $\Gamma \simeq \Z^n$. This gives rise to a dual lattice $\Gamma^* \subset E^*$. For any $\nu\in E$, $c\in\mathbb{R}$, let
$K(\nu,\,c)$ be the (closed) half space $\{x\in E\:|\:\langle\nu,\,x\rangle\geq c\}$ in $E$. Then we have:

\begin{definition}
A \emph{polyhedron} $P$ in $E$ is a finite intersection of half spaces,
i.e., $$P=\bigcap_{i=1}^{r}K(\nu_{i},\,c_{i})\qquad\textrm{for $\nu_{i}\in E^{*},\,c_{i}\in\mathbb{R}$}.$$
It is called a \emph{polyhedral cone} if all $c_{i}=0$, and moreover a \emph{rational polyhedral cone} if all $\nu_i \in \Gamma^*$ and $c_i = 0$. In addition, a polyhedron is called \emph{strongly convex} if it does not contain any affine subspace of $E$.
\end{definition}

The following definition will be useful.

\begin{definition}
A polyhedron $P \subset E^{*}$ is called \emph{Delzant} if its set of vertices is non-empty and each vertex $v \in P$ has the property that there are precisely $n$ edges $\{e_1, \dots e_n\}$ (one-dimensional faces) emanating from $v$ and there exists a basis $\{\varepsilon_1, \dots, \varepsilon_n\}$ of $\Gamma^*$ such that $\varepsilon_i$ lies along the ray $\R (e_i - v) $.
\end{definition}

\noindent Note that any such $P$ is necessarily strongly convex.

The asymptotic cone of a polyhedron contains all the directions going off to infinity in the polyhedron.

\begin{definition}\label{recession}
Let $P$ be a polyhedron in $E$. Its \emph{asymptotic cone}, denoted
by $\mathcal{C}(P)$, is the set of vectors $\alpha\in E$ with the property that there exists $\alpha^{0}\in E$
such that $\alpha^{0}+t\alpha\in P$ for sufficiently large $t>0$.
\end{definition}

The asymptotic cone may be identified as follows.

\begin{lemma}[{\cite[Lemma A.3]{wu}}]\label{alpha}
If $P=\bigcap_{i=1}^{r}K(\nu_{i},\,c_{i})$, then $\mathcal{C}(P)=\bigcap_{i=1}^{r}K(\nu_{i},\,0)$.
\end{lemma}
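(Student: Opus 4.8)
The plan is to establish the two inclusions $\mathcal{C}(P)\subseteq\bigcap_{i=1}^{r}K(\nu_{i},\,0)$ and $\bigcap_{i=1}^{r}K(\nu_{i},\,0)\subseteq\mathcal{C}(P)$ separately, each by a direct computation with the defining linear inequalities. Throughout I would write $C:=\bigcap_{i=1}^{r}K(\nu_{i},\,0)=\{\alpha\in E\mid\langle\nu_{i},\,\alpha\rangle\geq0\ \text{for all }i\}$, and I would assume, as is implicit in the statement, that $P$ is non-empty. This hypothesis is genuinely needed, since $0\in C$ always, whereas $\mathcal{C}(P)=\varnothing$ whenever $P=\varnothing$.

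For the inclusion $\mathcal{C}(P)\subseteq C$, I would take $\alpha\in\mathcal{C}(P)$ and let $\alpha^{0}\in E$ be a vector with $\alpha^{0}+t\alpha\in P$ for all $t$ larger than some $t_{0}>0$. Expanding the defining condition of $P$ yields, for each $i$ and each $t\geq t_{0}$, the inequality $\langle\nu_{i},\,\alpha^{0}\rangle+t\langle\nu_{i},\,\alpha\rangle\geq c_{i}$. If $\langle\nu_{i},\,\alpha\rangle$ were negative for some $i$, then the left-hand side would tend to $-\infty$ as $t\to+\infty$ and the inequality would eventually fail; hence $\langle\nu_{i},\,\alpha\rangle\geq0$ for every $i$, i.e.\ $\alpha\in C$.

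For the reverse inclusion $C\subseteq\mathcal{C}(P)$, I would fix any basepoint $\alpha^{0}\in P$ (possible precisely because $P\neq\varnothing$) and show that the whole ray $\{\alpha^{0}+t\alpha\mid t>0\}$ lies in $P$ whenever $\alpha\in C$. Indeed, for each $i$ and each $t>0$ we have $\langle\nu_{i},\,\alpha^{0}+t\alpha\rangle=\langle\nu_{i},\,\alpha^{0}\rangle+t\langle\nu_{i},\,\alpha\rangle\geq c_{i}+0=c_{i}$, using $\langle\nu_{i},\,\alpha^{0}\rangle\geq c_{i}$ (since $\alpha^{0}\in P$) together with $t>0$ and $\langle\nu_{i},\,\alpha\rangle\geq0$ (since $\alpha\in C$). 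Thus $\alpha^{0}+t\alpha\in P$ for all $t>0$, so the defining property of $\mathcal{C}(P)$ holds with this choice of $\alpha^{0}$, giving $\alpha\in\mathcal{C}(P)$.

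The argument is entirely elementary, so I do not expect any genuine analytic obstacle; the only point requiring care is the non-emptiness of $P$, which is what supplies the basepoint $\alpha^{0}$ in the second inclusion and is the sole place where the hypothesis that $P$ is an actual (non-empty) polyhedron, rather than a merely formal intersection of half spaces, enters the proof.
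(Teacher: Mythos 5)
Your proof is correct, and there is essentially nothing in the paper to compare it against: the paper states this lemma with a citation to \cite[Lemma A.3]{wu} and gives no proof of its own. Your two-inclusion argument is the standard one --- ruling out $\langle\nu_{i},\,\alpha\rangle<0$ by letting $t\to+\infty$ in the inequality $\langle\nu_{i},\,\alpha^{0}\rangle+t\langle\nu_{i},\,\alpha\rangle\geq c_{i}$ for one direction, and pushing the whole ray from a basepoint $\alpha^{0}\in P$ through the defining inequalities for the other --- and both steps are carried out correctly. Your caveat about non-emptiness is also accurate and worth recording: under the paper's definition a polyhedron is an arbitrary finite intersection of half spaces, which may be empty, and then $\mathcal{C}(P)=\varnothing$ while $0\in\bigcap_{i=1}^{r}K(\nu_{i},\,0)$ always, so the hypothesis $P\neq\varnothing$ is genuinely needed, and it enters only in supplying the basepoint $\alpha^{0}$, exactly as you say. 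This causes no downstream problems in the paper, since the polyhedra to which Lemma \ref{alpha} is applied are non-empty: $P_{-K_{M}}$ contains the origin in its interior by Lemma \ref{note}, and the polyhedron $Q$ in the proof of Lemma \ref{two} is non-empty by construction.
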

In particular, the asymptotic cone of a polyhedron is a polyhedral cone. In addition, we see that
for two polyhedrons $P,\,Q,$ in $E$, $$Q\subseteq P\implies\mathcal{C}(P)\subseteq\mathcal{C}(Q).$$

Compact polyhedrons can be characterised by their asymptotic cone.

\begin{lemma}[{\cite[Corollary A.9]{wu}}]\label{beta}
A polyhedron $P$ is compact if and only if $\mathcal{C}(P)=\{0\}$.
\end{lemma}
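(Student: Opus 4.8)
The plan is to prove the two implications separately, using Lemma \ref{alpha} to replace the definition of $\mathcal{C}(P)$ given in Definition \ref{recession} by the concrete description $\mathcal{C}(P)=\bigcap_{i=1}^{r}K(\nu_{i},\,0)$. From this description $0\in\mathcal{C}(P)$ always, so the real content of the statement is that compactness is equivalent to the absence of nonzero \emph{recession directions}. Throughout I fix an auxiliary norm $\norm{\cdot}$ on the finite-dimensional space $E$ (all norms being equivalent), and I use that $P$, as a finite intersection of closed half-spaces, is automatically closed, so that its compactness is equivalent to its boundedness. I assume $P\neq\emptyset$, as holds in all our applications.

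For the forward direction I would argue by contraposition. Suppose $\mathcal{C}(P)\neq\{0\}$ and pick $\alpha\in\mathcal{C}(P)$ with $\alpha\neq0$. By Definition \ref{recession} there exist $\alpha^{0}\in E$ and $T>0$ with $\alpha^{0}+t\alpha\in P$ for all $t\geq T$. Since $\norm{\alpha^{0}+t\alpha}\to\infty$ as $t\to\infty$, the set $P$ contains points of arbitrarily large norm and is therefore unbounded, hence not compact.

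For the reverse direction I would also argue by contraposition: assuming $P$ is not compact, hence unbounded, I produce a nonzero element of $\mathcal{C}(P)$. Choose a sequence $x_{k}\in P$ with $\norm{x_{k}}\to\infty$ and set $u_{k}:=x_{k}/\norm{x_{k}}$. By compactness of the unit sphere of $(E,\,\norm{\cdot})$, after passing to a subsequence I may assume $u_{k}\to u$ with $\norm{u}=1$. For each defining inequality $\langle\nu_{i},\,x_{k}\rangle\geq c_{i}$, dividing by $\norm{x_{k}}$ gives $\langle\nu_{i},\,u_{k}\rangle\geq c_{i}/\norm{x_{k}}$, and letting $k\to\infty$ yields $\langle\nu_{i},\,u\rangle\geq0$ for every $i$. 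By Lemma \ref{alpha} this says precisely that $u\in\mathcal{C}(P)$, and since $u\neq0$ we conclude $\mathcal{C}(P)\neq\{0\}$.

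The only step requiring genuine, if routine, work is the reverse direction: the normalization-and-limit argument together with the observation that the constants $c_{i}$ are washed out after dividing by $\norm{x_{k}}$. This is exactly where Lemma \ref{alpha} earns its keep, converting the limiting unit vector $u$ into a bona fide recession direction lying in the homogeneous cone $\bigcap_{i}K(\nu_{i},\,0)$. Neither implication presents a serious obstacle once this characterization of $\mathcal{C}(P)$ is in hand.
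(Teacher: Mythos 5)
Your proof is correct, and there is nothing in the paper to compare it against line by line: the paper states Lemma \ref{beta} as a quoted result, citing \cite[Corollary A.9]{wu}, and supplies no proof of its own. Your argument is a clean, self-contained substitute. Both implications check out: the forward direction correctly uses Definition \ref{recession} to extract an eventual ray $\alpha^{0}+t\alpha\in P$, whose norm is unbounded; the reverse direction's normalization argument is sound, since dividing $\langle\nu_{i},\,x_{k}\rangle\geq c_{i}$ by $\norm{x_{k}}\to\infty$ washes out the constants and the limit $u$ of the unit vectors $u_{k}$ satisfies $\langle\nu_{i},\,u\rangle\geq0$ for all $i$, which by Lemma \ref{alpha} places $u\in\mathcal{C}(P)$ with $u\neq0$. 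You were also right to flag the two hypotheses that make this airtight: $P$ is closed (so compact $\Leftrightarrow$ bounded) and $P\neq\emptyset$ (without which Lemma \ref{alpha}'s description $\mathcal{C}(P)=\bigcap_{i}K(\nu_{i},\,0)$ can disagree with Definition \ref{recession}, and the claim $0\in\mathcal{C}(P)$ fails). Worth noting as a point of contrast: in the cited source the statement appears as a \emph{corollary} within a structure-theory appendix, where it drops out of the standard decomposition of a polyhedron as a compact polytope plus its asymptotic cone (Minkowski--Weyl type results), so that compactness trivially forces the cone summand to vanish; your route avoids any decomposition machinery at the cost of a compactness-of-the-unit-sphere extraction, which is arguably more elementary and works verbatim in any finite-dimensional normed setting. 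Either way, the lemma is established, and your proof could stand in for the citation.
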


We also have:
\begin{definition}
The \emph{dual} of a polyhedral cone $C$ is the set $C^{\vee}=\{x\in E^{*}\:|\:\langle x,\,C\rangle\geq0\}$.
\end{definition}

It is clear that for two polyhedrons $P,\,Q,$ in $E$, $$Q\subseteq P\implies P^{\vee}\subseteq Q^{\vee}.$$

\subsection{Hamiltonian actions}\label{hamilton}

Recall what it means for an action to be Hamiltonian.

\begin{definition}
Let $(M,\,\omega)$ be a symplectic manifold and let $T$ be a real torus acting by symplectomorphisms on $(M,\,\omega)$.
Denote by $\mathfrak{t}$ the Lie algebra of $T$ and by $\mathfrak{t}^{*}$ its dual. Then we say that the action of $T$ is \emph{Hamiltonian}
if there exists a smooth map $\mu_{\omega}:M\to\mathfrak{t}^{*}$ such that for all $\zeta\in\mathfrak{t}$,
\begin{equation*}
-\omega\lrcorner\zeta=du_{\zeta},
\end{equation*}
where $u_{\zeta}(x)=\langle\mu_{\omega}(x),\,\zeta\rangle$ for all $\zeta\in\mathfrak{t}$ and $x\in M$
and $\langle\cdot\,,\cdot\rangle$ denotes the dual pairing between $\mathfrak{t}$ and $\mathfrak{t}^{*}$.
We call $\mu_{\omega}$ the \emph{moment map} of the $T$-action and we call $u_{\zeta}$ the \emph{Hamiltonian (potential)} of $\zeta$.
\end{definition}

Define
\begin{equation*}
\Lambda_{\omega}:=\{Y\in\mathfrak{t}\:|\: \textrm{$\mu_{\omega}(Y)$ is proper and bounded below}\}\subseteq\mathfrak{t}.
\end{equation*}
By Theorem \ref{theo-basic-prop-shrink}, this set is non-empty for $\omega$ a complete non-compact shrinking gradient K\"ahler-Ricci soliton. In addition,
it can be identified through the image of $\mu_{\omega}$ in the following way.

\begin{prop}[{\cite[Proposition 1.4]{wu}}]\label{identify}
Let $(M,\,\omega)$ be a (possibly non-compact) symplectic manifold of real dimension $2n$ with symplectic form $\omega$ on which there is a Hamiltonian
action of a real torus $T$ with moment map $\mu_{\omega}:M\to\mathfrak{t}^{*}$, where $\mathfrak{t}$ is the Lie algebra of $T$ and $\mathfrak{t}^{*}$
its dual. Assume that the fixed point set of $T$ is compact and that $\Lambda_{\omega}\neq\emptyset$. Then $\Lambda_{\omega}=\operatorname{int}(\mathcal{C}(\mu_{\omega}(M))^{\vee})$.
\end{prop}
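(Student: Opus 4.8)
The plan is to translate the two conditions defining $\Lambda_{\omega}$ into statements about a linear functional on the moment image, and then to read them off from the asymptotic cone. Write $\Delta:=\mu_{\omega}(M)\subseteq\mathfrak{t}^{*}$ and, for $Y\in\mathfrak{t}$, let $\ell_{Y}:=\langle Y,\,\cdot\,\rangle$ on $\mathfrak{t}^{*}$, so that the Hamiltonian of $Y$ is $u_{Y}=\ell_{Y}\circ\mu_{\omega}$ and membership in $\Lambda_{\omega}$ asks that $u_{Y}$ be proper and bounded below. The first thing I would establish is that $\Delta$ is a closed convex polyhedron with asymptotic cone $C:=\mathcal{C}(\Delta)$. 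Closedness follows directly from the hypothesis $\Lambda_{\omega}\neq\emptyset$: fixing $Y_{0}\in\Lambda_{\omega}$, any sequence with $\mu_{\omega}(x_{k})\to p$ has $u_{Y_{0}}(x_{k})$ bounded, so properness of $u_{Y_{0}}$ keeps the $x_{k}$ in a compact sublevel set and a limit point maps to $p\in\Delta$. Convexity and the polyhedral structure --- with vertices the images of the components of the compact fixed-point set --- is the content of the convexity theorem for Hamiltonian torus actions on non-compact symplectic manifolds possessing a proper moment-map component, which I would invoke here. Lemma \ref{alpha} then exhibits $C$ as a closed convex (rational polyhedral) cone, so that $C^{\vee}$ and $\operatorname{int}(C^{\vee})$ are the objects appearing in the statement.

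For the inclusion $\Lambda_{\omega}\subseteq\operatorname{int}(C^{\vee})$, I would take $Y\in\Lambda_{\omega}$ and argue in two steps. Given $\alpha\in C$, the definition of the asymptotic cone provides $\alpha^{0}$ with $\alpha^{0}+t\alpha\in\Delta$ for large $t$; the numbers $\langle Y,\,\alpha^{0}\rangle+t\langle Y,\,\alpha\rangle$ are then actual values of $u_{Y}$, so boundedness below forces $\langle Y,\,\alpha\rangle\geq0$, i.e. $Y\in C^{\vee}$. To reach the interior, suppose instead that $\langle Y,\,\alpha\rangle=0$ for some $\alpha\in C\setminus\{0\}$, and choose $x_{k}$ with $\mu_{\omega}(x_{k})=\alpha^{0}+t_{k}\alpha$ and $t_{k}\to\infty$; then $u_{Y}(x_{k})=\langle Y,\,\alpha^{0}\rangle$ is constant, so the $x_{k}$ all lie in a single compact sublevel set of $u_{Y}$ and subconverge, which forces $\mu_{\omega}(x_{k})$ to converge and contradicts $\lVert\alpha^{0}+t_{k}\alpha\rVert\to\infty$. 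Hence $\ell_{Y}>0$ on $C\setminus\{0\}$, which for the closed convex cone $C$ is exactly the assertion that $Y\in\operatorname{int}(C^{\vee})$.

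For the reverse inclusion $\operatorname{int}(C^{\vee})\subseteq\Lambda_{\omega}$, I would fix $Y\in\operatorname{int}(C^{\vee})$, so that $\ell_{Y}>0$ on $C\setminus\{0\}$ and in particular $\ell_{Y}\geq0$ on the asymptotic cone $C$ of $\Delta$; since $\Delta$ is closed and convex, this makes $\ell_{Y}$ bounded below on $\Delta$, whence $u_{Y}$ is bounded below. For properness, fix $c\in\mathbb{R}$ and set $\Delta_{c}:=\Delta\cap\{\ell_{Y}\leq c\}$, a polyhedron whose asymptotic cone is $C\cap\{\ell_{Y}\leq0\}=\{0\}$ by Lemma \ref{alpha}, and which is therefore compact by Lemma \ref{beta}. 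Now bring back $Y_{0}\in\Lambda_{\omega}$, which lies in $\operatorname{int}(C^{\vee})$ by the previous paragraph: since $\Delta_{c}$ is compact, $\ell_{Y_{0}}\leq c'$ on $\Delta_{c}$ for some $c'$, and hence $\{u_{Y}\leq c\}=\mu_{\omega}^{-1}(\Delta_{c})\subseteq\{u_{Y_{0}}\leq c'\}$, which is compact by properness of $u_{Y_{0}}$. Being closed inside a compact set, $\{u_{Y}\leq c\}$ is compact, so $u_{Y}$ is proper and $Y\in\Lambda_{\omega}$.

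I expect the two inclusions themselves to be routine convex-geometry and compactness arguments once the image is known to be a closed convex polyhedron; accordingly the main obstacle is the structural step, namely the non-compact convexity theorem identifying $\mu_{\omega}(M)$ as such a polyhedron, which is where the hypothesis that the fixed-point set is compact is really used. The one point that needs care in the reverse inclusion is that compactness of a sublevel set of $u_{Y}$ cannot be obtained from $Y$ alone; it must be inferred from properness of the distinguished functional $u_{Y_{0}}$, and this is precisely the role played by the assumption $\Lambda_{\omega}\neq\emptyset$.
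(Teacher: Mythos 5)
The paper never proves this proposition: it is imported wholesale as \cite[Proposition 1.4]{wu}, so there is no internal argument to compare against, and your proposal is a genuine reconstruction rather than a variant route. On its merits it is essentially correct. Writing $\Delta:=\mu_{\omega}(M)$, your two inclusions are exactly the right elementary arguments: for $\Lambda_{\omega}\subseteq\operatorname{int}(\mathcal{C}(\Delta)^{\vee})$ you only need that the numbers $\langle Y,\,\alpha^{0}+t\alpha\rangle$ are attained values of $u_{Y}$, plus compactness of sublevel sets of a proper, bounded-below Hamiltonian to kill the degenerate direction $\langle Y,\,\alpha\rangle=0$; and in the reverse inclusion you correctly isolate the one delicate point, namely that properness of $u_{Y}$ cannot be extracted from $Y$ alone but must be transferred from the distinguished $Y_{0}\in\Lambda_{\omega}$ via $\mu_{\omega}^{-1}(\Delta_{c})\subseteq\{u_{Y_{0}}\leq c'\}$ --- this is precisely where $\Lambda_{\omega}\neq\emptyset$ is used, as is your closedness argument for $\Delta$. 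The identification $\operatorname{int}(C^{\vee})=\{Y\:|\:\langle Y,\,\alpha\rangle>0\ \text{for all}\ \alpha\in C\setminus\{0\}\}$ for a closed cone (via compactness of $C$ intersected with the unit sphere) is also applied correctly in both directions. The one caveat is your structural black box: the standard non-compact convexity theorem of Lerman--Meinrenken--Tolman--Woodward (applicable here, since properness of the single component $u_{Y_{0}}$ together with its lower bound forces $\mu_{\omega}$ itself to be proper) delivers convexity, connected fibres, and \emph{local} polyhedrality of the image, whereas applying Lemmas \ref{alpha} and \ref{beta} as stated requires $\Delta$ to be a genuine polyhedron, i.e.\ a \emph{finite} intersection of half spaces; promoting local polyhedrality to this, using compactness of the fixed-point set, takes an additional argument and is in effect part of what the cited source \cite{wu} supplies. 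Fortunately your proof survives with the weaker input: once $\Delta$ is closed (which you prove) and convex, its asymptotic cone in the sense of Definition \ref{recession} is the recession cone, a closed convex cone, and both $\mathcal{C}(\Delta\cap\{\langle Y,\,\cdot\rangle\leq c\})=\mathcal{C}(\Delta)\cap\{\langle Y,\,\cdot\rangle\leq0\}$ and the equivalence ``compact if and only if asymptotic cone $=\{0\}$'' hold for closed convex sets generally (this also repairs your slightly glib assertion that $\ell_{Y}$ is bounded below on $\Delta$, which then follows from compactness of $\Delta_{0}$ together with $\ell_{Y}>0$ off it); so polyhedrality is never actually needed for the identity $\Lambda_{\omega}=\operatorname{int}(\mathcal{C}(\mu_{\omega}(M))^{\vee})$, and your invoked theorem can be weakened accordingly.
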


\subsection{Toric geometry}\label{toric-geom}

In this section, we collect together some standard facts from toric geometry as well as recall those results from \cite{charlie} that we require.
We begin with the following definition.
\begin{definition}\label{toricmanifold}
A \emph{toric manifold} is an $n$-dimensional complex manifold $M$ endowed
with an effective holomorphic action of the algebraic torus $\Cstarn$ such that the following hold true.
\begin{itemize}
  \item The fixed point set of the $\Cstarn$-action is compact.
  \item There exists a point $p\in M$ with the property that the orbit $\Cstarn \cdot p \subset M$ forms a dense open subset of $M$.
\end{itemize}
\end{definition}
We will often denote the dense orbit simply by $\Cstarn \subset M$ in what follows.
The $\Cstarn$-action of course determines the action of the real torus $T^n \subset \Cstarn$.

\subsubsection{Divisors on toric varieties and fans}

Let $T^n \subset \Cstarn$ be the real torus with Lie algebra $\t$ and denote the dual pairing between $\t$ and the dual space $\mathfrak{t}^{*}$ by $\langle \cdot\,,\cdot\rangle$. There is a natural integer lattice $\Gamma \simeq \Z^n \subset \t$ comprising all $\lambda \in \t$ such that $\operatorname{exp}(\lambda) \in T^n$ is the identity. This then induces a dual lattice $\Gamma^* \subset \t^*$. We have the following combinatorial definition.

\begin{definition}
 A \emph{fan} $\Sigma$ in $\t$ is a finite set of rational polyhedral cones $\sigma$ satisfying:

	\begin{enumerate}
		\item For every $\sigma \in \Sigma$, each face of $\sigma$ also lies in $\Sigma$.
		\item For every pair $\sigma_1, \sigma_2 \in \Sigma$, $\sigma_1 \cap \sigma_2$ is a face of each.
	\end{enumerate}
\end{definition}

To each fan $\Sigma$ in $\t$, one can associate a toric variety $X_\Sigma$. Heuristically, $\Sigma$ contains all the data necessary to produce a partial equivariant compactification of $\Cstarn$, resulting in $X_\Sigma$. More concretely, one obtains $X_\Sigma$ from $\Sigma$ as follows. For each $n$-dimensional cone $\sigma \in \Sigma$, one constructs an affine toric variety $U_\sigma$ which we first explain. We have the dual cone $\sigma^{\vee}$ of $\sigma$. Denote by $S_\sigma$ the semigroup of those lattice points which lie in $\sigma^{\vee}$ under addition. Then one defines the semigroup ring, as a set, as all finite sums of the form
	\begin{equation*}
		\C[S_\sigma] = \left\{ \left.\sum \lambda_s s \, \right| \, s \in S_\sigma \right\},
	\end{equation*}
with the ring structure defined on monomials by $\lambda_{s_1}s_1\cdot \lambda_{s_2}s_2  = (\lambda_{s_1}\lambda_{s_2})(s_1+ s_2)$ and extended in the natural way. The affine variety $U_\sigma$ is then defined to be $\text{Spec}(\C[S_\sigma])$. This automatically comes endowed with a $\Cstarn$-action with a dense open orbit. This construction can also be applied to the lower dimensional cones $\tau \in \Sigma$. If $\sigma_1 \cap \sigma_2 = \tau$, then there is a natural way to map $U_\tau$ into $U_{\sigma_1}$ and $U_{\sigma_2}$ isomorphically. One constructs $X_\Sigma$ by declaring the collection of all $U_\sigma$ to be an open affine cover of $X_{\Sigma}$ with transition functions determined by $U_\tau$. This identification is also reversible.

\begin{prop}[{\cite[Corollary 3.1.8]{cox}}]\label{fann}
Let $M$ be a smooth toric manifold. Then there exists a fan $\Sigma$ such that $M \simeq X_\Sigma$.
\end{prop}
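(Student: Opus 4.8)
The plan is to reconstruct the fan $\Sigma$ intrinsically from the torus action on $M$, using one-parameter subgroups together with the standard dictionary between affine toric varieties and strongly convex rational polyhedral cones. The first and essential step is to reduce to the affine case. By \emph{Sumihiro's theorem}, the normal variety $M$ equipped with its $\Cstarn$-action admits a finite cover by $\Cstarn$-invariant affine open subvarieties $\{U_{i}\}_{i=1}^{r}$. Each $U_{i}$ contains the dense orbit and is therefore itself an affine toric manifold to which the affine correspondence can be applied.

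Next I would identify each $U_{i}$ with some $U_{\sigma_{i}}$. The coordinate ring $\C[U_{i}]$ is spanned by those characters $\chi^{m}$, $m\in\Gamma^{*}$, extending regularly over $U_{i}$, and this set of characters forms a finitely generated semigroup $S_{i}\subset\Gamma^{*}$. Because $U_{i}$ is normal, $S_{i}$ is saturated, so $S_{i}=\sigma_{i}^{\vee}\cap\Gamma^{*}$ for the strongly convex rational polyhedral cone $\sigma_{i}:=(\mathbb{R}_{\geq 0}S_{i})^{\vee}\subset\t$, and hence $U_{i}=\operatorname{Spec}(\C[S_{i}])=U_{\sigma_{i}}$. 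Concretely, $\sigma_{i}$ can be described as the set of those $v\in\Gamma$ whose associated one-parameter subgroup $\lambda_{v}$ satisfies $\lim_{t\to 0}\lambda_{v}(t)\cdot p\in U_{i}$ for $p$ in the dense orbit, which is the geometric origin of the cone.

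Third, I would verify the fan axioms. For indices $i,j$ the overlap $U_{i}\cap U_{j}$ is again $\Cstarn$-invariant and affine, hence equals $U_{\tau_{ij}}$ for a cone $\tau_{ij}$. Separatedness of $M$ is precisely what forces $\tau_{ij}$ to be a common face of both $\sigma_{i}$ and $\sigma_{j}$ and forces $\sigma_{i}\cap\sigma_{j}=\tau_{ij}$; this is exactly the content of the two defining conditions of a fan. Declaring $\Sigma$ to consist of all the $\sigma_{i}$ together with their faces then yields a fan, and the gluing of the charts $U_{\sigma_{i}}$ along $U_{\tau_{ij}}$ reproduces the gluing defining $X_{\Sigma}$, giving an equivariant isomorphism $M\simeq X_{\Sigma}$. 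Finally, smoothness of $M$ means each $U_{i}$ is biholomorphic to $\mathbb{C}^{n}$ near its unique fixed point, which translates into each maximal $\sigma_{i}$ being generated by a basis of $\Gamma$, so that $\Sigma$ is in fact a smooth fan.

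The main obstacle is the very first step, namely the existence of the $\Cstarn$-invariant affine open cover. This is Sumihiro's theorem, and it is the one genuinely nontrivial input: without it there is no reason an arbitrary toric manifold should be assembled from affine toric pieces. The remaining work is the now-routine equivalence between affine toric varieties and cones, together with the translation of the separatedness of $M$ into the face-compatibility axiom of a fan.
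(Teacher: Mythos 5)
Your proposal is correct and is essentially the proof given in the paper's cited source \cite[Corollary 3.1.8]{cox} (the paper itself offers no proof, deferring entirely to that reference): Sumihiro's theorem produces the $\Cstarn$-invariant affine cover, normality and saturation identify each chart with $U_{\sigma_i}$ via the semigroup of regular characters, and separatedness yields the face-compatibility axioms of the fan, with smoothness of $M$ translating into each cone being generated by part of a $\Z$-basis of $\Gamma$. You also correctly isolate Sumihiro's theorem as the one genuinely nontrivial input, exactly as in the reference.
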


 \begin{prop}[{\cite[Theorem 3.2.6]{cox}, Orbit-Cone Correspondence}]\label{orbitcone} The $k$-dimensional cones $\sigma \in \Sigma$ are in a natural one-to-one correspondence with the $(n-k)$-dimensional orbits $O_\sigma$ of the $\Cstarn$-action on $X_\Sigma$.
 \end{prop}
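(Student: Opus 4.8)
The plan is to establish the correspondence one affine chart at a time and then to check that the local pictures patch. Since $X_\Sigma$ is covered by the affine toric varieties $U_\sigma = \operatorname{Spec}(\mathbb{C}[S_\sigma])$ and the $\Cstarn$-orbits are intrinsic, it suffices to describe the orbits meeting each $U_\sigma$. The key input is the functor-of-points description: because $\mathbb{C}[S_\sigma]$ is the semigroup ring of $S_\sigma = \sigma^\vee \cap \Gamma^*$, the closed points of $U_\sigma$ are in natural bijection with semigroup homomorphisms $\gamma : S_\sigma \to (\mathbb{C},\,\cdot)$ into the multiplicative monoid of $\mathbb{C}$, and the $\Cstarn$-action reads $(t\cdot\gamma)(m) = \chi^m(t)\,\gamma(m)$, where $\chi^m$ is the character attached to $m \in \Gamma^*$.

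First I would attach to each face $\tau \preceq \sigma$ a distinguished point $x_\tau \in U_\sigma$, namely the homomorphism sending $m \in S_\sigma$ to $1$ if $m \in \tau^\perp$ and to $0$ otherwise; this is multiplicative precisely because $\tau^\perp \cap S_\sigma$ is a face of the semigroup $S_\sigma$. I would then compute its orbit $O_\tau := \Cstarn \cdot x_\tau$, identifying it with $\operatorname{Hom}(\tau^\perp \cap \Gamma^*,\,\Cstar)$, a subtorus of complex dimension $n - \dim \tau$. The dimension count is made transparent by the alternative description $x_\tau = \lim_{s\to 0}\lambda^v(s)$, where $\lambda^v$ is the one-parameter subgroup attached to any lattice point $v$ in the relative interior of $\tau$: indeed $\chi^m(\lambda^v(s)) = s^{\langle m,\,v\rangle}$ tends to $0$ exactly when $\langle m,\,v\rangle > 0$ and equals $1$ on $\tau^\perp$.

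The heart of the matter is then the orbit decomposition $U_\sigma = \bigsqcup_{\tau \preceq \sigma} O_\tau$. To prove it I would take an arbitrary homomorphism $\gamma : S_\sigma \to \mathbb{C}$ and consider its support $F := \{m \in S_\sigma : \gamma(m) \neq 0\}$. Multiplicativity forces $F$ to be closed under addition and saturated (if $m + m' \in F$ then $m,\,m' \in F$), i.e.\ a face of the semigroup $S_\sigma$; such faces are cut out as $\tau^\perp \cap S_\sigma$ for a unique face $\tau \preceq \sigma$, after which a short computation places $\gamma$ in $O_\tau$. The main obstacle is exactly this bookkeeping between the face structure of $\sigma$ and the face structure of the semigroup $S_\sigma \subset \sigma^\vee$: one must invoke the separation and duality properties of rational polyhedral cones—in particular the inclusion-reversing bijection between faces of $\sigma$ and faces of $\sigma^\vee$—to guarantee that every support $F$ arises as $\tau^\perp \cap S_\sigma$ for a genuine face $\tau$ and to match dimensions correctly.

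Finally I would globalize. For $\tau \preceq \sigma$ the chart $U_\tau$ is an open $\Cstarn$-invariant subset of $U_\sigma$, and the gluing maps defining $X_\Sigma$ respect the distinguished points, so the local decompositions assemble into $X_\Sigma = \bigsqcup_{\sigma \in \Sigma} O_\sigma$ with each orbit occurring exactly once. The resulting assignment $\sigma \mapsto O_\sigma$ is the desired bijection, and the relation $\dim_{\mathbb{C}} O_\sigma = n - \dim\sigma$ established above yields precisely the stated matching of $k$-dimensional cones with $(n-k)$-dimensional orbits. Strong convexity of the cones of $\Sigma$ ensures that the orbit indexed by $\sigma$ itself is the minimal orbit of $U_\sigma$, so no orbit is double-counted across overlapping charts.
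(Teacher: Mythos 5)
The paper gives no proof of this proposition at all---it is quoted verbatim from \cite[Theorem 3.2.6]{cox}---and your argument is precisely the standard proof from that source: closed points of $U_\sigma$ as semigroup homomorphisms $S_\sigma\to(\mathbb{C},\cdot)$, distinguished points $x_\tau$ realised as limits $\lim_{s\to0}\lambda^v(s)$ for $v$ in the relative interior of $\tau$, the decomposition $U_\sigma=\bigsqcup_{\tau\preceq\sigma}O_\tau$ via the support of a homomorphism together with the inclusion-reversing face duality between $\sigma$ and $\sigma^\vee$ (the one genuinely nontrivial convexity input, which you correctly isolate), and gluing along $U_{\sigma_1}\cap U_{\sigma_2}=U_{\sigma_1\cap\sigma_2}$. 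Your proposal is correct in outline and takes essentially the same route as the cited reference, so there is nothing internal to the paper to compare it against.
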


In particular, each ray $\sigma \in \Sigma$ determines a unique torus-invariant divisor $D_\sigma$. As a consequence, a torus-invariant Weil divisor $D$ on $X_\Sigma$ naturally determines a polyhedron $P_D \subset \mathfrak{t}^{*}$. Indeed, we can decompose $D$ uniquely as $D = \sum_{i=1}^N a_i D_{\sigma_i}$, where $\{\sigma_i\}_{i}\subset\Sigma$ is the collection of rays. Then by assumption, there exists a unique minimal lattice element $\nu_i \in \sigma_i \cap \Gamma$. $P_{D}$ is then given by

 \begin{equation} \label{eqnB2}
 	P_D = \left\{ x \in \mathfrak{t}^{*} \: | \: \langle \nu_i, x \rangle \geq - a_i \right\} = \bigcap_{i = 1}^N K(\nu_i, -a_i).
 \end{equation}

\subsubsection{K\"ahler metrics on toric varieties}\label{finito}

For a given toric manifold $M$ endowed with a Riemannian metric $g$ invariant under the action of the real torus $T^n \subset \Cstarn$ and K\"ahler with respect to the underlying complex structure
of $M$, the K\"ahler form $\omega$ of $g$ is also invariant under the $T^n$-action. We call such a manifold a \emph{toric K\"ahler manifold}.
In what follows, we always work with a fixed complex structure on $M$.

Hamiltonian K\"ahler metrics have a useful characterisation due to Guillemin.

\begin{prop}[{\cite[Theorem 4.1]{gilly}}]\label{propB6}
	Let $\omega$ be any $T^n$-invariant K\"ahler form on $M$. Then the $T^{n}$-action is Hamiltonian with respect to $\omega$ if and only if the restriction of $\omega$ to the dense orbit $\Cstarn \subset M$ is exact, i.e., there exists a $T^{n}$-invariant potential $\phi$ such that
	\begin{equation*}
		\omega = 2i\p\bp \phi.
	\end{equation*}
\end{prop}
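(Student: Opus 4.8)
The plan is to work on the dense orbit $\Cstarn\subset M$ in logarithmic coordinates. Writing $z_j=e^{w_j}$ with $w_j=\xi_j+i\theta_j$, the real torus $T^n$ acts by translation in the angles $\theta=(\theta_1,\dots,\theta_n)$, so every $T^n$-invariant form or function depends only on $\xi=(\xi_1,\dots,\xi_n)$, and the complex structure satisfies $J\partial_{\xi_j}=\partial_{\theta_j}$. A direct computation then shows that for a $T^n$-invariant potential $\phi=\phi(\xi)$ one has
$$2i\partial\bar\partial\phi=\sum_{j,k}\frac{\partial^{2}\phi}{\partial\xi_j\,\partial\xi_k}\,d\xi_j\wedge d\theta_k,$$
so that, denoting by $\zeta_l=\partial_{\theta_l}$ the generators of $\t$,
$$\iota_{\zeta_l}\bigl(2i\partial\bar\partial\phi\bigr)=-\,d\!\left(\frac{\partial\phi}{\partial\xi_l}\right).$$
In particular the pure $d\theta\wedge d\theta$ and $d\xi\wedge d\xi$ parts of such a form vanish, and $\nabla_\xi\phi$ is the natural candidate for the moment map.

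For the implication that an invariant potential produces a Hamiltonian action, I would set $u_l:=\partial\phi/\partial\xi_l$ on the dense orbit, so that $\iota_{\zeta_l}\omega=-du_l$ there. The one-form $\alpha_l:=\iota_{\zeta_l}\omega$ is a globally defined smooth closed one-form on $M$ (since $\zeta_l$ extends to a global holomorphic vector field and $\omega$ is globally smooth and $T^n$-invariant), and it is exact on $\Cstarn$. Because the toric boundary $M\setminus\Cstarn$ has real codimension two, the restriction map $H^1(M;\R)\to H^1(\Cstarn;\R)$ is injective, so $[\alpha_l]=0$ in $H^1(M;\R)$ and hence $\alpha_l=-dU_l$ for a global smooth $U_l$ that restricts to $u_l$ up to a constant. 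As the $d\theta\wedge d\theta$ part of $\omega$ vanishes, $\mathcal{L}_{\zeta_k}U_l=-\omega(\zeta_l,\zeta_k)=0$, so the $U_l$ are $T^n$-invariant and assemble, by linearity in $\zeta$, into a $T^n$-equivariant moment map.

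For the converse, assume the action is Hamiltonian with Hamiltonian $u_l$ for $\zeta_l$. Restricting to $\Cstarn$ and writing $\omega=\sum a_{jk}\,d\xi_j\wedge d\xi_k+\sum b_{jk}\,d\xi_j\wedge d\theta_k+\sum c_{jk}\,d\theta_j\wedge d\theta_k$ with coefficients depending only on $\xi$, the relation $\iota_{\zeta_l}\omega=-du_l$ gives $\partial_{\theta_k}u_l=-\omega(\zeta_l,\zeta_k)=-c_{lk}$, a quantity constant along each angle $\theta_k$; since $u_l$ is single-valued and $\theta_k$ is $2\pi$-periodic, this forces $c_{lk}\equiv0$, and $J$-invariance of $\omega$ (which yields $a_{jk}=c_{jk}$ and $b_{jk}=b_{kj}$) then gives $a_{jk}\equiv0$ with $b$ symmetric. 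Thus $u_l=u_l(\xi)$ and $\omega=\sum_{j,k}b_{jk}\,d\xi_j\wedge d\theta_k$ with $b_{jk}=\partial_{\xi_j}u_k$ symmetric, so $\sum_l u_l\,d\xi_l$ is closed on the simply connected $\xi$-space $\R^n$. Hence $u_l=\partial_{\xi_l}\phi$ for a $T^n$-invariant $\phi(\xi)$, and the first computation yields $\omega=2i\partial\bar\partial\phi$; in particular $\omega$ is exact on $\Cstarn$.

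The only genuinely global point, and the step I expect to be the main obstacle, is the existence of the moment map on all of $M$ in the first direction, that is, extending the orbit Hamiltonians $u_l$ across the toric boundary divisors. Rather than analysing the boundary behaviour of $\nabla_\xi\phi$ by hand, I handle it cohomologically: exactness of the closed form $\iota_{\zeta_l}\omega$ on the codimension-two complement of the divisors upgrades to exactness on $M$ via the injectivity of $H^1(M;\R)\to H^1(\Cstarn;\R)$. The remaining verifications—that the pure $d\theta\wedge d\theta$ term vanishes so that the potential is genuinely $T^n$-invariant, and that the per-generator primitives $U_l$ assemble linearly and equivariantly—are the book-keeping needed to promote these primitives into an honest moment map.
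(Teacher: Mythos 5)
Your proposal is correct, but note that the paper does not prove this proposition at all: it is quoted directly from Guillemin \cite[Theorem 4.1]{gilly}, so there is no internal argument to compare against, and what you have written is a complete self-contained proof. Your two computations on the dense orbit are right and match the paper's own formula \eqref{e:T5}: for invariant $\phi$ one has $2i\p\bp\phi=\phi_{jk}\,d\xi^j\wedge d\theta^k$, whence $\iota_{\p_{\theta^l}}\omega=-d(\p\phi/\p\xi^l)$, and in the converse direction the periodicity argument killing the $d\theta\wedge d\theta$ block, followed by $J$-compatibility killing the $d\xi\wedge d\xi$ block (this is precisely where K\"ahler rather than merely symplectic is used), then simple connectedness of $\t\cong\R^n$ to integrate the closed form $\sum_l u_l\,d\xi^l$, is exactly the standard route. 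You also correctly isolate the one genuinely global point, namely extending the orbit Hamiltonians across the toric boundary, and your cohomological treatment works: $\alpha_l=\iota_{\zeta_l}\omega$ is closed by Cartan's formula since $\omega$ is closed and $T^n$-invariant, and exactness on $\Cstarn$ upgrades to exactness on $M$ because $H^1(M;\R)\to H^1(\Cstarn;\R)$ is injective for a complement of real codimension two. Two small points you should make explicit for full rigour: first, the toric boundary $M\setminus\Cstarn$ is a \emph{possibly singular} analytic subset (the invariant divisors cross), so the injectivity should be justified by the surjectivity of $\pi_1(\Cstarn)\to\pi_1(M)$, which holds for complements of analytic sets of complex codimension at least one by a transversality or stratification argument, not by citing the smooth-submanifold case; second, in showing $\mathcal{L}_{\zeta_k}U_l=-\omega(\zeta_l,\zeta_k)=0$ you use that the $d\theta\wedge d\theta$ component of $\omega$ vanishes \emph{globally}, which follows from its vanishing on the dense orbit (a consequence of the hypothesis $\omega=2i\p\bp\phi$ there) together with continuity --- worth a sentence, since in this direction you cannot yet invoke the Hamiltonian condition. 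With those remarks added, the argument is complete and, in the non-compact setting the paper works in, arguably cleaner than deferring to Guillemin's compact statement.
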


Fix once and for all a $\Z$-basis $(X_1,\ldots,X_n)$ of $\Gamma \subset \t$. This in particular induces a background coordinate system $\xi=(\xi^1, \dots, \xi^n)$ on $\t$.
Using the natural inner product on $\t$ to identify $\t \cong \t^*$, we can also identify $\t^* \cong \R^n$.
For clarity, we will denote the induced coordinates on $\t^*$ by $x=(x^1,\ldots, x^n)$. Let $(z_1, \dots, z_n)$ be the natural
coordinates on $\Cstarn$ as an open subset of $\C^n$. There is a natural diffeomorphism $\text{Log}:
\Cstarn \to \t \times T^n$ which provides a one-to-one correspondence between $T^n$-invariant smooth functions on
 $\Cstarn$ and smooth functions on $\t$. Explicitly,
\begin{equation}\label{diffeoo}
(z_1, \dots, z_n)\xmapsto{\operatorname{Log}}(\log(r_1), \dots, \log(r_n), \theta_1, \dots, \theta_n)=(\xi_{1},\ldots,\xi_{n},\,\theta_{1},\ldots,\theta_{n}),
\end{equation}
where $z_j = r_j e^{i \theta_j}$,\,$r_{j}>0$. Given a function $H(\xi)$ on $\t$, we can extend $H$ trivially to $\t \times T^n$ and pull back by Log to
obtain a $T^n$-invariant function on $\Cstarn$. Clearly, any $T^n$-invariant function on $\Cstarn$ can be written in this form.

Choose any branch of $\log$ and write $w = \log(z)$. Then clearly $w = \xi + i \theta$, where $\xi=(\xi^1,\ldots,\xi^n)$ are real coordinates on $\t$
(or, more precisely, there is a corresponding lift of $\theta$ to the universal cover with respect to which this equality holds),
and so if $\phi$ is $T^n$-invariant and $\omega = 2i \p \bp \phi$, then we have that
\begin{equation}\label{e:T5}
	\omega = 2i\frac{\p^2 \phi}{ \p w^i \p\bar{w}^j} dw_i \wedge d\bar{w}_j = \frac{\p^2 \phi}{ \p \xi^i \p\xi^j} d\xi^i \wedge d\theta^j.
\end{equation}
In this setting, the metric $g$ corresponding to $\omega$ is given on $\t \times T^n$ by
\begin{equation}\label{metricc}
	g = \phi_{ij}(\xi)d\xi^i d\xi^j + \phi_{ij}(\xi)d\theta^i d\theta^j,
\end{equation}
and the moment map $\mu$ as a map $\mu: \t \times T^n \to \t^*$ is defined by the relation
		\begin{equation*}
		\langle \mu(\xi, \theta), b \rangle = \langle \nabla \phi(\xi), b \rangle
	\end{equation*}
for all $b \in \t$, where $\nabla \phi$ is the Euclidean gradient of $\phi$.
The $T^n$-invariance of $\phi$ implies that it depends only on $\xi$ when considered a function on $\t \times T^n$
via \eqref{diffeoo}. Since $\omega$ is K\"ahler, we see from \eqref{e:T5} that the Hessian of $\phi$ is positive definite so that $\phi$ itself is strictly convex.
In particular, $\nabla \phi$ is a diffeomorphism onto its image.
Using the identifications mentioned above, we view $\nabla \phi$ as a map from $\t$ into an open subset of $\t^*$.

\subsubsection{K\"ahler-Ricci solitons on toric manifolds}

Next we define what we mean by a shrinking K\"ahler-Ricci soliton in the toric category.
\begin{definition}
A complex $n$-dimensional shrinking gradient K\"ahler-Ricci soliton $(M,\,g,\,X)$ with complex structure $J$ and K\"ahler form $\omega$ is \emph{toric} if $(M,\,\omega)$ is a toric K\"ahler
manifold as in Definition \ref{toricmanifold} and $JX$ lies in the Lie algebra $\t$ of the underlying real torus $T^{n}$ that acts on $M$. In particular, the zero set of $X$ is compact.
\end{definition}

It follows from \cite{wyliee} that $\pi_{1}(M)=0$, hence the induced real $T^n$-action is automatically Hamiltonian with respect to $\omega$.
Working on the dense orbit $\Cstarn \subset M$, the condition that a vector field $JY$ lies in $\t$ is equivalent to saying that in the coordinate system $(\xi^1,\ldots,\xi^n,\,\theta_{1},\ldots,\theta_{n})$
from \eqref{diffeoo}, there is a constant $b_Y=(b_{Y}^{1},\ldots,b_{Y}^{n})\in \R^n$ such that
\begin{equation}\label{eqnY4}
	JY =  b_Y^i \frac{\p}{\p\theta^i}\qquad\textrm{or equivalently,}\qquad Y =   b_Y^i \frac{\p}{\p\xi^i}.
\end{equation}
From Proposition \ref{propB6}, we know that $\mathcal{L}_{X}\omega=2i\p\bp X(\phi)$. In addition,
the function $X(\phi)$ on $\Cstarn$ can be written as $\langle b_X, \nabla \phi \rangle = b_X^j \frac{\p\phi}{\p\xi^j}$,
where $b_{X}\in\mathbb{R}^{n}$ corresponds to the soliton vector field $X$ via \eqref{eqnY4}.
These observations allow us to write the shrinking soliton equation \eqref{krseqn} as a real Monge-Amp\`ere equation for $\phi$ on $\R^n$.

\begin{prop}[{\cite[Proposition 2.6]{charlie}}]
Let $(M,\,g,\,X)$ be a toric shrinking gradient K\"ahler-Ricci soliton with K\"ahler form $\omega$. Then
there exists a unique smooth convex real-valued function $\phi$ defined on the dense orbit $\Cstarn\subset M$ such that $\omega=2i\partial\bar{\partial}\phi$
and
\begin{equation} \label{realMA}
	\det(\phi_{ij})=e^{-2\phi+\langle b_{X},\,\nabla\phi\rangle}.
\end{equation}
\end{prop}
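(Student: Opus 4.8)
The plan is to transport the complex soliton equation \eqref{krseqn} to the dense orbit $\Cstarn\subset M$, express everything in the log-coordinates $(\xi,\theta)$ of \eqref{diffeoo} where all the data become $T^n$-invariant, and read off the real Monge--Amp\`ere equation \eqref{realMA}; the remaining work is to pin down the normalisation of the potential so that existence and uniqueness hold simultaneously. First, since $\pi_1(M)=0$, the real $T^n$-action is Hamiltonian, so Proposition \ref{propB6} (Guillemin) produces a $T^n$-invariant $\phi$ with $\omega=2i\partial\bar\partial\phi$ on $\Cstarn$. Being $T^n$-invariant, $\phi$ depends only on $\xi$, and as observed in Section \ref{finito} the K\"ahler condition makes its Hessian $\phi_{ij}=\partial^2\phi/\partial\xi^i\partial\xi^j$ positive definite, so $\phi$ is strictly convex. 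The ambiguity in such a potential is exactly the $T^n$-invariant pluriharmonic functions; since each $\log|z_i|^2$ is pluriharmonic, these are precisely the affine functions $\langle a,\xi\rangle+c$ of $\xi$.

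Next I compute the two sides of \eqref{krseqn}. Writing $\omega=ig_{i\bar j}\,dz^i\wedge d\bar z^j$ and using $\xi_i=\tfrac12\log|z_i|^2$, a chain-rule computation shows that $g_{i\bar j}$ differs from $\phi_{ij}$ by the factor $\tfrac{1}{2z_i\bar z_j}$, so that
$$\log\det(g_{i\bar j})=\log\det(\phi_{ij})-\sum_i\log|z_i|^2-n\log 2.$$
As the last two contributions are pluriharmonic and constant, the Ricci form reduces on $\Cstarn$ to $\rho_\omega=-i\partial\bar\partial\log\det(g_{i\bar j})=-i\partial\bar\partial\log\det(\phi_{ij})$. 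For the potential term, I identify the soliton potential $f$ with the Hamiltonian of the Killing field $JX=b_X^i\,\partial/\partial\theta^i$: from $X=\nabla^g f$ and $\omega(U,V)=g(JU,V)$ one gets $-\iota_{JX}\omega=df$, and comparing with the moment-map relation $\langle\mu,b\rangle=\langle\nabla\phi,b\rangle$ of Section \ref{finito} yields $f=\langle b_X,\nabla\phi\rangle+\mathrm{const}$. Substituting both expressions into $\rho_\omega+i\partial\bar\partial f=2i\partial\bar\partial\phi$ gives
$$i\partial\bar\partial\bigl(\log\det(\phi_{ij})-\langle b_X,\nabla\phi\rangle+2\phi\bigr)=0,$$
so the bracketed $T^n$-invariant function is pluriharmonic, hence affine, and therefore
$$\log\det(\phi_{ij})=\langle b_X,\nabla\phi\rangle-2\phi-\langle a,\xi\rangle-c.$$

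Finally, I normalise away the affine term. Replacing $\phi$ by $\tilde\phi=\phi+\ell$ for a suitable affine $\ell(\xi)=\langle a',\xi\rangle+c''$ leaves $\det(\phi_{ij})$ unchanged while shifting $\langle b_X,\nabla\phi\rangle$ and $2\phi$ by constant and linear terms; the unique choice $a'=\tfrac12 a$ together with the matching constant $c''$ absorbs $\langle a,\xi\rangle+c$ entirely, producing a potential satisfying $\det(\tilde\phi_{ij})=e^{-2\tilde\phi+\langle b_X,\nabla\tilde\phi\rangle}$, which is \eqref{realMA} after renaming. For uniqueness, two $T^n$-invariant convex functions satisfying both $\omega=2i\partial\bar\partial\phi$ and \eqref{realMA} differ by an affine $\langle a,\xi\rangle+c$; equating the right-hand sides of \eqref{realMA} and cancelling the (equal) left-hand sides forces $\langle b_X,a\rangle-2\langle a,\xi\rangle-2c\equiv 0$ on $\t$, whence $a=0$ and then $c=0$.

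I expect the main obstacle to be purely a matter of careful bookkeeping: correctly tracking the pluriharmonic (affine) ambiguity of $\phi$ and the additive constants through the identification $f=\langle b_X,\nabla\phi\rangle+\mathrm{const}$, so that the normalisation producing the clean exponent of \eqref{realMA} is exactly the one that makes the uniqueness argument close. The geometric inputs (Guillemin's potential, convexity, the Ricci-form computation, and the Hamiltonian description of $JX$) are all in hand from the preceding sections; the subtlety is ensuring these normalisations are mutually consistent rather than in any single estimate.
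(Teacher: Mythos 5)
Your proposal is correct and follows essentially the same route as the proof the paper relies on (cited from \cite[Proposition 2.6]{charlie} and implicit in the computations of Section \ref{finito} and Lemma \ref{normal}): Guillemin's Proposition \ref{propB6} supplies the invariant potential, the soliton equation \eqref{krseqn} reduces on the dense orbit to a real Monge--Amp\`ere equation up to a $T^{n}$-invariant pluriharmonic (hence affine) term via $\rho_{\omega}=-i\partial\bar{\partial}\log\det(\phi_{ij})$ and $f=\langle b_{X},\nabla\phi\rangle+\mathrm{const}$, and the affine ambiguity is then uniquely absorbed by the normalisation \eqref{realMA}. Your bookkeeping of the shift $\phi\mapsto\phi+\langle a',\xi\rangle+c''$ (with $a'=\tfrac{1}{2}a$) and the uniqueness argument forcing $a=0$, $c=0$ are both accurate, so no gap remains.
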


A priori, the function $\phi$ is defined only up to addition of a linear function.
However, \eqref{realMA} provides a normalisation for $\phi$ which in turn provides a normalisation for $\nabla\phi$, the moment map of the action.
The next lemma shows that this normalisation coincides with that for the moment map as defined in \cite[Definition 5.16]{cds}.

\begin{lemma}\label{normal}
Let $(M,\,g,\,X)$ be a toric complete shrinking gradient K\"ahler-Ricci soliton with complex structure $J$ and K\"ahler form $\omega$
with soliton vector field $X=\nabla^{g}f$ for a smooth real-valued function $f:M\to\mathbb{R}$.
Let $\phi$ be given by Proposition \ref{propB6} and normalised by \eqref{realMA}, let $JY\in\mathfrak{t}$, and let $u_{Y}=\langle\nabla\phi,\,b_{Y}\rangle$ be the Hamiltonian potential of $JY$
with $b_{Y}$ as in \eqref{eqnY4} so that $\nabla^{g}u_{Y}=Y$. Then $\mathcal{L}_{JX}u_{Y}=0$ and $\Delta_{\omega}u_{Y}+u_{Y}-\frac{1}{2}Y\cdot f=0$.
\end{lemma}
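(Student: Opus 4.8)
The plan is to verify both identities on the dense orbit $\Cstarn\subset M$, where the coordinates $(\xi,\theta)$ of \eqref{diffeoo} are available; since the induced real $T^{n}$-action is Hamiltonian (as noted after the definition of a toric soliton), the potential $u_{Y}$ is a globally defined smooth function on $M$, and $u_{Y}$, $\Delta_{\omega}u_{Y}$ and $Y\cdot f$ are all continuous, so once each identity is checked on the dense orbit it propagates to all of $M$ by continuity. On the orbit, $u_{Y}=\langle\nabla\phi,\,b_{Y}\rangle=b_{Y}^{j}\phi_{j}$ is a function of $\xi$ alone because $\phi$ is $T^{n}$-invariant, while by \eqref{eqnY4} we have $X=b_{X}^{i}\p_{\xi^{i}}$, $Y=b_{Y}^{i}\p_{\xi^{i}}$, and $JX=b_{X}^{i}\p_{\theta^{i}}$. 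The first identity is then immediate: since $u_{Y}$ depends only on $\xi$ whereas $JX$ is tangent to the torus directions,
\begin{equation*}
\mathcal{L}_{JX}u_{Y}=JX(u_{Y})=b_{X}^{i}\p_{\theta^{i}}\bigl(b_{Y}^{j}\phi_{j}(\xi)\bigr)=0;
\end{equation*}
equivalently, $u_{Y}$ is $T^{n}$-invariant and $JX$ generates a one-parameter subgroup of $T^{n}$.

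For the second identity I would first rewrite the transport term without ever computing $f$. Since $\nabla^{g}f=X$ and $\nabla^{g}u_{Y}=Y$ are both gradients, the pairing is symmetric, so using the metric \eqref{metricc} and that $X,Y$ have no $\theta$-component,
\begin{equation*}
Y\cdot f=df(Y)=g(\nabla^{g}f,\,Y)=g(X,\,Y)=\phi_{ij}\,b_{X}^{i}b_{Y}^{j}.
\end{equation*}
Next, from $\omega=2i\p\bp\phi$ and \eqref{metricc} one reads off $g_{i\bar{\jmath}}=\tfrac{1}{2}\phi_{ij}$ on the orbit, hence $g^{i\bar{\jmath}}=2\phi^{ij}$, and for any $T^{n}$-invariant $h=h(\xi)$ one has $\p_{w^{i}}\p_{\bar{w}^{j}}h=\tfrac{1}{4}h_{ij}$, so that the K\"ahler (complex) Laplacian $\Delta_{\omega}=g^{i\bar{\jmath}}\p_{i}\p_{\bar{\jmath}}$ is
\begin{equation*}
\Delta_{\omega}h=g^{i\bar{\jmath}}\p_{i}\p_{\bar{\jmath}}h=2\phi^{ij}\cdot\tfrac{1}{4}h_{ij}=\tfrac{1}{2}\phi^{ij}h_{ij}.
\end{equation*}

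The heart of the computation is to differentiate the normalisation \eqref{realMA}. Taking logarithms gives $\log\det(\phi_{ij})=-2\phi+\langle b_{X},\,\nabla\phi\rangle$, and differentiating in $\xi^{l}$ via Jacobi's formula $\p_{l}\log\det\phi=\phi^{ij}\phi_{ijl}$ yields
\begin{equation*}
\phi^{ij}\phi_{ijl}=-2\phi_{l}+b_{X}^{k}\phi_{kl}.
\end{equation*}
Contracting against $b_{Y}^{l}$ and recognising $u_{Y}=b_{Y}^{l}\phi_{l}$ together with $g(X,Y)=\phi_{kl}b_{X}^{k}b_{Y}^{l}=Y\cdot f$, I obtain $\phi^{ij}(u_{Y})_{ij}=b_{Y}^{l}\phi^{ij}\phi_{ijl}=-2u_{Y}+Y\cdot f$, whence
\begin{equation*}
\Delta_{\omega}u_{Y}=\tfrac{1}{2}\phi^{ij}(u_{Y})_{ij}=\tfrac{1}{2}\bigl(-2u_{Y}+Y\cdot f\bigr)=-u_{Y}+\tfrac{1}{2}\,Y\cdot f,
\end{equation*}
which is exactly the asserted identity.

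The only genuinely delicate point is the bookkeeping of normalising constants needed to match the coefficients $+1$ and $-\tfrac{1}{2}$ in the statement. The factor $\tfrac{1}{2}$ relating $\Delta_{\omega}$ to the bare Hessian trace $\phi^{ij}h_{ij}$, combined with the precise right-hand side $-2\phi+\langle b_{X},\,\nabla\phi\rangle$ furnished by the normalisation \eqref{realMA}, is precisely what produces the required cancellation; any other convention for $\Delta_{\omega}$ or for the soliton normalisation would spoil it. It is therefore essential to use \eqref{realMA} in the exact normalised form in which it was derived, and to note that the transport term $Y\cdot f$ equals the metric pairing $g(X,Y)$, so that the potential $f$ itself need never be written down explicitly.
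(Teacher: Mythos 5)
Your proof is correct, and its core — obtaining the second identity by differentiating the normalisation \eqref{realMA} along $Y$ — is exactly the paper's mechanism; the differences are in the rendering. The paper carries out this differentiation invariantly: it rewrites \eqref{realMA} as $\log\det\bigl((-2i)^{n}\omega^{n}/(n!\,dw^{1}\wedge d\bar{w}^{1}\wedge\cdots\wedge dw^{n}\wedge d\bar{w}^{n})\bigr)+2\phi-f=0$, differentiates along $Y$, and invokes $\operatorname{tr}_{\omega}\mathcal{L}_{Y}\omega=2\Delta_{\omega}u_{Y}$ from \cite[Lemma 2.5]{charlie}, whereas you work in the real coordinates $\xi$ via Jacobi's formula $\p_{l}\log\det(\phi_{ij})=\phi^{ij}\phi_{ijl}$ and check the constants $g_{i\bar{\jmath}}=\tfrac12\phi_{ij}$, $\Delta_{\omega}h=\tfrac12\phi^{ij}h_{ij}$ by hand — the same computation, with the citation replaced by explicit bookkeeping (which you got right, including the factor matching). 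Two small divergences are worth noting. First, your identity $Y\cdot f=g(X,Y)=\phi_{kl}b_{X}^{k}b_{Y}^{l}$ is a slightly cleaner route than the paper's, which substitutes $f=\langle b_{X},\nabla\phi\rangle$ on the dense orbit; your version manifestly does not depend on the additive normalisation of $f$. Second, for $\mathcal{L}_{JX}u_{Y}=0$ the paper argues globally: $d(\mathcal{L}_{JX}u_{Y})=\mathcal{L}_{JX}(du_{Y})=-\mathcal{L}_{JX}(\omega\lrcorner JY)=0$ using $\mathcal{L}_{JX}\omega=0$ and $[JX,JY]=0$, so $\mathcal{L}_{JX}u_{Y}$ is constant, and the constant vanishes because $JX$ has a zero (as $f$ is proper and bounded below, hence attains a minimum). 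You instead observe on the dense orbit that $u_{Y}=b_{Y}^{j}\phi_{j}(\xi)$ is $T^{n}$-invariant while $JX=b_{X}^{i}\p_{\theta^{i}}$ is tangent to the torus directions, and conclude by density and continuity; this is equally valid and bypasses the properness of $f$ at that step. Both halves of your argument are sound as written.
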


To see the equivalence with \cite[Definition 5.16]{cds},
simply replace $Y$ with $JY$ in this latter definition as here we assume that $JY\in\mathfrak{t}$, contrary to the convention in \cite[Definition 5.16]{cds} where it is assumed that
$Y\in\mathfrak{t}$.

\begin{proof}[Proof of Lemma \ref{normal}]
By definition, we have that
$$d\left(\mathcal{L}_{JX}u_{Y}\right)=\mathcal{L}_{JX}(du_{Y})=-\mathcal{L}_{JX}(\omega\lrcorner JY)=0,$$
where we have used the fact that $\mathcal{L}_{JX}\omega=0$ and $[JX,\,JY]=0$. $\mathcal{L}_{JX}u_{Y}$ is therefore equal to a constant
which must be zero as $JX$ has a zero because $X=\nabla^{g}f$ and $f$ is proper and bounded from below (cf.~Theorem \ref{theo-basic-prop-shrink}), hence attains a local minimum.
This proves the $JX$-invariance of $u_{Y}$.

The final equation follows by differentiating \eqref{realMA} with respect to $Y$. Indeed,
from \eqref{e:T5} and \eqref{metricc} we see that on the dense orbit,
\begin{equation*}
\begin{split}
\frac{\omega^{n}}{n!}&=\operatorname{vol}_{g}=\det(\phi_{ij})\,d\xi^{1}\wedge d\theta^{1}\wedge\ldots\wedge d\xi^{n}\wedge d\theta^{n}\\
&=\frac{\det(\phi_{ij})}{(-2i)^{n}}\,dw^{1}\wedge d\bar{w}^{1}\wedge\ldots\wedge dw^{n}\wedge d\bar{w}^{n}.
\end{split}
\end{equation*}
Recalling that $f$ denotes the Hamiltonian potential of $JX\in\mathfrak{t}$ so that $f=\langle b_{X},\,\nabla\phi\rangle$ on the dense orbit, \eqref{realMA}
may therefore be rewritten as
$$\log\det\left(\frac{(-2i)^{n}\omega^{n}}{n!dw^{1}\wedge d\bar{w}^{1}\wedge\ldots\wedge dw^{n}\wedge d\bar{w}^{n}}\right)+2\phi-f=0.$$
Differentiating along $Y$, this yields the relation
\begin{equation*}
\begin{split}
0&=Y\cdot\log\det\left(\frac{(-2i)^{n}\omega^{n}}{n!dw^{1}\wedge d\bar{w}^{1}\wedge\ldots\wedge dw^{n}\wedge d\bar{w}^{n}}\right)+2Y\cdot\phi-Y\cdot f\\
&=\operatorname{tr}_{\omega}\mathcal{L}_{Y}\omega+2u_{Y}-Y\cdot f\\
&=2\Delta_{\omega}u_{Y}+2u_{Y}-Y\cdot f,\\
\end{split}
\end{equation*}
where we have made use of \cite[Lemma 2.5]{charlie} in the last line. From this, the result follows.
\end{proof}

Given the normalisation \eqref{realMA}, the next lemma identifies the image of the moment map $\mu=\nabla\phi$.

\begin{lemma}[{\cite[Lemmas 4.4 and 4.5]{charlie}}]\label{note}
Let $(M,\,g,\,X)$ be a complete toric shrinking gradient K\"ahler-Ricci soliton, let $\{D_i\}$ be the prime $\Cstarn$-invariant divisors in $M$, and let $\Sigma \subset \t$ be the fan determined by
Proposition \ref{fann}. Let $\sigma_i\in\Sigma$ be the ray corresponding to $D_i$ with minimal generator $\nu_i \in \Gamma$.
\begin{enumerate}
  \item There is a distinguished Weil divisor representing the anticanonical class $-K_{M}$ given by
  \begin{equation*}
		-K_M = \sum_i D_i
	\end{equation*}
	whose associated polyhedron (cf.~\eqref{eqnB2}) is given by
 	\begin{equation}\label{xmas}
		P_{-K_M} = \left\{ x \: | \: \langle \nu_i, x \rangle \geq -1 \right\}
	\end{equation}
which is strongly convex and has full dimension in $\mathfrak{t}^{*}$. In particular, the origin lies in the interior of $P_{-K_{M}}$.
  \item If $\mu$ is the moment map for the induced real $T^n$-action normalised by \eqref{realMA}, then the image of $\mu$ is precisely $P_{-K_{M}}$.
\end{enumerate}
	\end{lemma}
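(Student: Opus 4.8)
The plan is to handle part (i) combinatorially and to locate the analytic content of the lemma in part (ii). For part (i), I would begin from the standard toric description of the canonical class: on any smooth toric variety one has $K_M = -\sum_i D_i$, the sum being over all prime $\Cstarn$-invariant divisors (see \cite{cox}). Hence $-K_M = \sum_i D_i$, and substituting the coefficients $a_i = 1$ into \eqref{eqnB2} gives precisely \eqref{xmas}. The interior and dimension assertions are then immediate, since $\langle\nu_i,0\rangle = 0 > -1$ for every $i$ shows that the origin satisfies all defining inequalities strictly and so lies in $\operatorname{int}(\Pol)$, forcing full dimension. For strong convexity I would note that if $\Pol$ contained a line $x_0 + \R v$ with $v \neq 0$, then $\langle\nu_i, x_0 + tv\rangle \geq -1$ for all $t \in \R$ would force $\langle\nu_i, v\rangle = 0$ for every $i$; but the properness and lower boundedness of $f$ (Theorem \ref{theo-basic-prop-shrink}) produce a torus fixed point, hence a maximal cone of $\Sigma$ whose $n$ primitive generators form a $\Z$-basis of $\Gamma$ by smoothness and therefore span $\t$. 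This forces $v = 0$.

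For part (ii) I would argue in two stages. First, $\mu(M) = \overline{\nabla\phi(\t)}$ is a closed convex polyhedron whose facets are in bijection with the codimension-one orbits, that is, with the divisors $D_i$, the inner conormal of the facet cut out by $D_i$ being the primitive ray generator $\nu_i$. This is the orbit-cone correspondence (Proposition \ref{orbitcone}) combined with the local structure of a toric moment map near an invariant divisor. Thus $\mu(M) = \bigcap_i K(\nu_i, -a_i)$ for some constants $a_i$, and by Lemma \ref{alpha} its asymptotic cone is $\bigcap_i K(\nu_i, 0) = \mathcal{C}(\Pol)$ regardless of the $a_i$; so the two polyhedra already agree at infinity, and everything reduces to showing $a_i = 1$ for every $i$.

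The decisive stage is to read off $a_i = 1$ from the normalisation \eqref{realMA}. Fixing $i$ and choosing the $\Z$-basis of $\Gamma$ so that $\nu_i$ is its first vector, the divisor $D_i$ is reached as $\xi^1 \to -\infty$ with $\xi^2,\dots,\xi^n$ confined to a compact set, and $a_i = -\lim \partial_{\xi^1}\phi$ along this degeneration. Writing \eqref{realMA} as $\log\det(\phi_{ij}) = -2\phi + \langle b_X, \nabla\phi\rangle$, I would compare the two sides in this regime. On the right, $\langle b_X, \nabla\phi\rangle$ stays bounded (it is the soliton potential $f$ evaluated near a fixed point of the facet) while $\partial_{\xi^1}\phi \to -a_i$, so the right side equals $2a_i\,\xi^1 + O(1)$. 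On the left, smoothness of the complete soliton metric across $D_i$ together with primitivity of $\nu_i$ forces the Guillemin-type boundary behaviour $\phi_{11} \sim C\,e^{2\xi^1}$, giving $\log\det(\phi_{ij}) = 2\xi^1 + O(1)$. Matching the coefficients of $\xi^1$ yields $2a_i = 2$, i.e.\ $a_i = 1$.

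The main obstacle is precisely this boundary analysis. One must rigorously establish, along the degeneration toward $D_i$, that $\partial_{\xi^1}\phi$ converges, that $\det(\phi_{ij})$ degenerates at exactly the rate dictated by smoothness of the metric and primitivity of $\nu_i$, and that the error terms are genuinely $O(1)$. A convenient way to make this precise is to pass to the symplectic (Legendre-dual) picture and invoke the Abreu--Guillemin boundary conditions for the symplectic potential along a facet of a Delzant polyhedron. It is worth emphasising that the two appearances of the constant $2$ --- in the convention $\omega = 2i\partial\bar\partial\phi$ and in the term $-2\phi$ of \eqref{realMA} --- are exactly what make the leading coefficients match and the anticanonical value $a_i = 1$ emerge; this is the concrete avatar of the cohomological identity $[\omega] = [\rho_\omega]$ imposed by the shrinking soliton equation \eqref{krseqn}.
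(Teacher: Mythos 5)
A point of order first: this paper never proves Lemma \ref{note} --- it is imported verbatim from \cite[Lemmas 4.4 and 4.5]{charlie} --- so there is no internal proof to compare against; I measure your argument against the machinery the paper sets up around the lemma. Your part (i) is essentially right: $-K_M=\sum_i D_i$ is standard toric geometry, the origin visibly satisfies all inequalities of \eqref{xmas} strictly, and strong convexity reduces to the $\{\nu_i\}$ spanning $\t$. One small repair: a zero of $JX$ (e.g.\ at the minimum of $f$) is a priori fixed only by the closure of the flow of $JX$, which may be a proper subtorus of $T^n$, so ``a torus fixed point'' needs either the Borel fixed point theorem applied to the compact $\Cstarn$-invariant zero set of $X$, or the cleaner observation that if all $\nu_i$ lay in a hyperplane then $M$ would split off a $\Cstar$-factor, contradicting $\pi_1(M)=0$ from \cite{wyliee}.

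For part (ii), your Monge--Amp\`ere matching is correct, constants included: on the paper's own Gaussian example, $\phi=\frac{1}{4}e^{2\xi}-\xi-1$ gives $\log\det(\phi_{ij})=2\xi$ and $-2\phi+\langle b_X,\nabla\phi\rangle=2\xi+1$, so the slopes match with $a_i=1$ exactly as you predict, and your remark about the two factors of $2$ is the right diagnosis. Note, however, that the paper's surrounding text suggests a softer route, which is presumably close to the cited proof: it proves Lemma \ref{normal} immediately before stating Lemma \ref{note} precisely to pin down the normalisation. Taking $b_Y=\nu_i$, so that $JY$ generates the circle fixing $D_i$ pointwise, $Y$ vanishes along $D_i$ and linearises there with normal weight $1$ (primitivity of $\nu_i$) and tangential weights $0$, whence $\Delta_\omega u_{\nu_i}=1$ on $D_i$; the identity $\Delta_\omega u_{\nu_i}+u_{\nu_i}-\frac{1}{2}Y\cdot f=0$ then gives $\langle \nu_i,\mu\rangle=-1$ on $D_i$ directly, with no boundary blow-up analysis at all. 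Your approach buys an explicit picture of the metric degeneration; the Hamiltonian identity buys a two-line identification of the facet constants.

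Two genuine caveats in your write-up. First, your Stage 1 --- that $\mu(M)$ is a closed convex polyhedron whose facets are cut out exactly by the $D_i$ with conormals $\nu_i$ --- is a real theorem in the noncompact setting, not a corollary of Proposition \ref{orbitcone}: one needs properness of $\mu$ (which does hold here, since $\langle\mu,b_X\rangle=f$ is proper and bounded below by Theorem \ref{theo-basic-prop-shrink}) together with a noncompact convexity/local-normal-form result of Karshon--Lerman type, and this should be invoked explicitly. Second, your appeal to the Abreu--Guillemin boundary conditions is mildly circular, since those conditions are formulated on the moment polyhedron you are trying to identify. The fix is local and avoids the Legendre dual picture: near a point of $D_i$, average to obtain a smooth $T^n$-invariant local potential $\Phi$ for $\omega$; on the orbit, $\phi-\Phi$ is invariant and pluriharmonic, hence affine in $\xi$, and writing $s=e^{2\xi^1}=|z_1|^2$ one computes $\partial_{\xi^1}\phi\to -a_i$ and $\phi_{11}=4s\Phi_s+4s^2\Phi_{ss}\sim Ce^{2\xi^1}$ with $C>0$, which is exactly the input your matching requires. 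With these two points repaired, your proof goes through.
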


\subsubsection{The weighted volume functional}\label{weighted}

As a result of Lemma \ref{normal}, we can now define the weighted volume functional.

\begin{definition}[{Weighted volume functional, \cite[Definition 5.16]{cds}}]\label{weightedvol}
Let $(M,\,g,\,X)$ be a complex $n$-dimensional toric shrinking gradient K\"ahler-Ricci soliton with K\"ahler form $\omega=2i\partial\bar{\partial}\phi$
 on the dense orbit with $\phi$ strictly convex with moment map $\mu=\nabla\phi$ normalised by \eqref{realMA}. Assume that the fixed point set of the torus is compact and
 recall that $$\Lambda_{\omega}:=\{Y\in\mathfrak{t}\:|\:\textrm{$\langle\mu,\,Y\rangle$ is proper and bounded below}\}\subseteq\mathfrak{t}.$$ Then the \emph{weighted volume functional} $F:\Lambda_{\omega}\to\mathbb{R}$ is defined by
	\begin{equation*}
		F_{\omega}(v) = \int_M e^{-\langle \mu,\,v \rangle} \omega^n.
	\end{equation*}	
\end{definition}

As the fixed point set of the torus is compact by definition, $F_{\omega}$ is well-defined by the non-compact version of the Duistermaat-Heckman formula \cite{wu}
(see also \cite[Theorem A.3]{cds}). This leads to two important lemmas concerning the weighted volume functional in the toric category, the independence of $\Lambda_{\omega}$ and $F_{\omega}$ from the choice of shrinking soliton $\omega$.

\begin{lemma}\label{one}
$\Lambda_{\omega}$ is independent of the choice of toric shrinking K\"ahler-Ricci soliton $\omega$ in Definition \ref{weightedvol}
and is given by $\Lambda_{\omega}=\operatorname{int}(C^{\vee})$, where $C:=\{x\:|\:\langle\nu_{i},\,x\rangle\geq0\}$ and $\{\nu_{i}\}$ are as in
Lemma \ref{note}.
\end{lemma}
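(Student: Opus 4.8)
The plan is to obtain both assertions at once by combining the abstract description of $\Lambda_{\omega}$ in terms of the moment image (Proposition \ref{identify}) with the explicit identification of that image provided by Lemma \ref{note}.

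First I would check that the hypotheses of Proposition \ref{identify} hold for a toric complete shrinking gradient K\"ahler-Ricci soliton $(M,\,g,\,X)$ with K\"ahler form $\omega$: the fixed point set of $T^{n}$ is compact by the definition of a toric soliton, the induced real $T^{n}$-action is Hamiltonian since $\pi_{1}(M)=0$ (noted above via \cite{wyliee}), and $\Lambda_{\omega}\neq\emptyset$ as recorded just after the definition of $\Lambda_{\omega}$, since the soliton potential $f=\langle\mu_{\omega},\,JX\rangle$ is the Hamiltonian of $JX\in\t$ and is proper and bounded below by Theorem \ref{theo-basic-prop-shrink}. Proposition \ref{identify} then gives
\[
\Lambda_{\omega}=\operatorname{int}\bigl(\mathcal{C}(\mu_{\omega}(M))^{\vee}\bigr),
\]
with $\mu_{\omega}$ normalised by \eqref{realMA}.

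Next I would compute the asymptotic cone on the right. By Lemma \ref{note}(ii) the normalisation \eqref{realMA} forces $\mu_{\omega}(M)=P_{-K_{M}}$, and by Lemma \ref{note}(i) this polyhedron is $P_{-K_{M}}=\bigcap_{i}K(\nu_{i},\,-1)$, where the $\nu_{i}\in\Gamma$ are the minimal generators of the rays $\sigma_{i}$ of the fan $\Sigma$. Applying Lemma \ref{alpha} to strip the constants then yields
\[
\mathcal{C}(\mu_{\omega}(M))=\mathcal{C}(P_{-K_{M}})=\bigcap_{i}K(\nu_{i},\,0)=\{x\:|\:\langle\nu_{i},\,x\rangle\geq0\}=C,
\]
so that $\Lambda_{\omega}=\operatorname{int}(C^{\vee})$, which is the claimed formula. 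The independence from $\omega$ is then a corollary of this computation rather than a separate argument: the cone $C$ is determined solely by the minimal ray generators $\nu_{i}$ of the fan $\Sigma$, and $\Sigma$ is an invariant of the toric manifold $M$ with its $\Cstarn$-action (Proposition \ref{fann}), carrying no dependence on the metric. Since every toric shrinking soliton on $M$ has, after the normalisation \eqref{realMA}, the same moment image $P_{-K_{M}}$ and hence the same asymptotic cone $C$, the set $\operatorname{int}(C^{\vee})$, and therefore $\Lambda_{\omega}$, is the same for all such $\omega$.

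I do not anticipate a genuine obstacle here, as all the substance is contained in the cited results; the only point requiring care is the bookkeeping of the dual pairing $\t^{*}\times\t\to\mathbb{R}$. Since $\mu_{\omega}(M)\subset\t^{*}$, the half-spaces $K(\nu_{i},\,c_{i})$ and the cones $\mathcal{C}(\mu_{\omega}(M))=C$ live in $E=\t^{*}$ with $\nu_{i}\in E^{*}=\t$, so that $C^{\vee}\subset\t$ and $\operatorname{int}(C^{\vee})\subseteq\t$, consistent with the inclusion $\Lambda_{\omega}\subseteq\t$.
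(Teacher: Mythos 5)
Your proposal is correct and follows essentially the same route as the paper: both apply Proposition \ref{identify} to get $\Lambda_{\omega}=\operatorname{int}(\mathcal{C}(\mu_{\omega}(M))^{\vee})$, use Lemma \ref{note}(ii) with the normalisation \eqref{realMA} to identify $\mu_{\omega}(M)=P_{-K_{M}}$ as a fixed polytope depending only on the torus action, and then compute $\mathcal{C}(P_{-K_{M}})=C$ via Lemma \ref{alpha}. Your explicit verification of the hypotheses of Proposition \ref{identify} (compact fixed point set, Hamiltonian action via $\pi_{1}(M)=0$, and $\Lambda_{\omega}\neq\emptyset$ from Theorem \ref{theo-basic-prop-shrink}) and your care with where $C$ and $C^{\vee}$ live are points the paper treats implicitly, but they do not change the argument.
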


\begin{proof}
Recall from  Proposition \ref{identify} that $\Lambda_{\omega}$ is given by $\operatorname{int}(\mathcal{C}(\mu_{\omega}(M))^{\vee})$, where
the moment map $\mu_{\omega}$ with respect to $\omega$, normalised by \eqref{realMA}, depends on $\omega$. However, no matter the choice of $\omega$ in Definition \ref{weightedvol}, the normalisation \eqref{realMA} implies by Lemma \ref{note}(ii) that the image of $M$ under the moment map is always given by $P_{-K_{M}}$, a fixed polytope determined solely by the torus action. $\Lambda_{\omega}$ is therefore independent of the choice of $\omega$ in Definition \ref{weightedvol}. Finally, the asymptotic cone of this polytope (as a subset of $\mathfrak{t}^{*}$) is, by Lemma \ref{alpha}, given by $C$. This leads to the desired expression for $\Lambda_{\omega}$.
\end{proof}

Note that $C$ is always a strongly convex rational polyhedral cone in $\t$, although not necessarily of full dimension, whereas $C^\vee$ is always full dimensional, although not necessarily strongly convex.

\begin{lemma}\label{two}
$F_{\omega}$ is independent of the choice of toric shrinking K\"ahler-Ricci soliton $\omega$ in Definition \ref{weightedvol}. Moreover,
after identifying $\Lambda_{\omega}$ with a subset of $\mathbb{R}^{n}$ via \eqref{eqnY4}, $F_{\omega}$
is given by $F_{\omega}(v)=(2\pi)^n \int_{P_{-K_M}} e^{-\langle v,\,x \rangle }\,dx$, where $x=(x^1,\ldots,x^n)$ denotes coordinates on $\mathfrak{t}^{*}$ dual to
the coordinates $(\xi^{1},\ldots,\xi^{n})$ on $\t$ introduced in Section \ref{finito}.
\end{lemma}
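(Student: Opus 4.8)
The plan is to establish the explicit formula for $F_{\omega}$ first, after which the independence statement is immediate. The two essential inputs are both available in the excerpt: the expression for the volume form on the dense orbit recorded in the proof of Lemma \ref{normal}, namely $\frac{\omega^n}{n!} = \det(\phi_{ij})\,d\xi^1 \wedge d\theta^1 \wedge \cdots \wedge d\xi^n \wedge d\theta^n$, and the identification $\mu(M) = P_{-K_M}$ from Lemma \ref{note}(ii). Conceptually this is just the non-compact Duistermaat--Heckman identity already invoked immediately after Definition \ref{weightedvol}, which says that the pushforward of the Liouville measure under the moment map is $(2\pi)^n$ times Lebesgue measure on the image polyhedron; the proof amounts to carrying this out by hand.

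First I would note that the complement of the dense orbit $\Cstarn \subset M$ is a finite union of torus-invariant divisors, hence of measure zero, so it suffices to integrate over $\Cstarn$, coordinatised by $(\xi,\,\theta) \in \t \times T^n$ via the diffeomorphism \eqref{diffeoo}. On this orbit the integrand $e^{-\langle \mu,\,v\rangle}$ depends only on $\xi$, since $\mu = \nabla\phi$ is $T^n$-invariant, so integrating out the angular variables $\theta$ — each of period $2\pi$ relative to the chosen $\Z$-basis of $\Gamma$ — produces the factor $(2\pi)^n$ and reduces the computation to an integral over $\t$ against the density $\det(\phi_{ij})\,d\xi$. I would then change variables by the moment map $x = \mu(\xi) = \nabla\phi(\xi)$: as $\phi$ is strictly convex, $\nabla\phi$ is a diffeomorphism of $\t$ onto the interior of its image, with Jacobian $\det(\operatorname{Hess}\phi) = \det(\phi_{ij})$, which cancels exactly against the density, so $\det(\phi_{ij})\,d\xi = dx$ becomes Lebesgue measure on $\t^*$. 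Since the image of $\mu$ is $P_{-K_M}$ by Lemma \ref{note}(ii), this yields
\[
F_{\omega}(v) = (2\pi)^n \int_{P_{-K_M}} e^{-\langle v,\,x \rangle}\,dx .
\]
The integral converges precisely for $v \in \Lambda_{\omega}$, because there $\langle x,\,v \rangle$ is proper and bounded below on $P_{-K_M}$; this is exactly where the defining hypothesis on $\Lambda_{\omega}$ enters.

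The independence of $F_{\omega}$ from the choice of toric shrinking soliton $\omega$ is then immediate: the right-hand side of the displayed formula depends on $\omega$ only through the polyhedron $P_{-K_M}$, which by Lemma \ref{note}(i) is determined solely by the anticanonical class and the fan of the torus action, and not by the particular soliton metric. Together with Lemma \ref{one}, which gives the corresponding statement for the domain $\Lambda_{\omega} = \operatorname{int}(C^{\vee})$, this shows that $F_{\omega}$ is well defined independently of $\omega$. I expect the only real difficulty to be bookkeeping rather than conceptual: one must track the overall constant $(2\pi)^n$ carefully (from the $T^n$-integration with angular periods $2\pi$ against the Liouville normalisation $\tfrac{\omega^n}{n!}$), confirm that the divisor complement is genuinely negligible, and verify that $x = \nabla\phi$ sweeps out the full interior of $P_{-K_M}$ so that no part of the polyhedron is missed in the change of variables.
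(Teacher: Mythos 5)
Your proposal is correct and follows essentially the same route as the paper's proof: integrate out the $T^{n}$-factor to produce the constant $(2\pi)^{n}$, change variables by the moment map $x=\nabla\phi(\xi)$ so that the Jacobian $\det(\phi_{ij})$ cancels against the volume density, and invoke Lemma \ref{note}(ii) together with the normalisation \eqref{realMA} to identify the image with the fixed polyhedron $P_{-K_{M}}$, from which independence of $\omega$ is immediate. The one step where the paper does genuinely more work is the finiteness of $\int_{P_{-K_{M}}}e^{-\langle v,\,x\rangle}\,dx$, which you assert from properness and boundedness below of $\langle\cdot\,,v\rangle$, whereas the paper proves it by showing that the sublevel polyhedron $Q=\{x\in P_{-K_{M}}\:|\:\langle v,\,x\rangle\leq0\}$ has trivial asymptotic cone and hence is compact (Lemma \ref{beta}, via the dual-cone inclusions); note that properness of a function alone does not imply integrability of its exponential decay on a general unbounded region, and it is exactly the polyhedral asymptotic-cone argument that supplies the linear growth needed, so this detail is worth writing out rather than asserting.
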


\begin{proof}
We first show that the given integral is finite. To demonstrate this, it suffices to show that $\langle v,\,x\rangle>0$
on the complement of a compact subset of $P_{-K_{M}}$. To this end, recall that $0\in\operatorname{int}(P_{-K_{M}})$ so that the intersection of
the hyperplane $\{x\in\mathbb{R}^{n}\:|\:\langle v,\,x\rangle=0\}$ with $P_{-K_{M}}$ is non-empty. We claim that the polyhedron
$Q:=\{x\in P_{-K_{M}}\:|\:\langle v,\,x\rangle\leq0\}$ is compact. Indeed, by Lemma \ref{beta}, $Q$ is compact if and only if $\mathcal{C}(Q)=\{0\}$.
To derive a contradiction, assume that there exists a non-zero vector $w\in\mathcal{C}(Q)$.
Then from the definition of the asymptotic cone, one can see that $Q$ contains a ray of the form
$x_{0}+tw,\,t\geq0,$ for some $x_{0}\in Q$. Taking the inner product with $v$, it follows that
$\langle v,\,x_{0}+tw\rangle>0$ for $t\gg0$ because $\langle v,\,w\rangle>0$
by virtue of the fact that
$$Q\subseteq P\implies \mathcal{C}(Q)\subseteq\mathcal{C}(P)\implies
\mathcal{C}(P)^{\vee}\subseteq\mathcal{C}(Q)^{\vee}
\implies\operatorname{int}(\mathcal{C}(P)^{\vee})\subseteq\operatorname{int}(\mathcal{C}(Q)^{\vee}).$$
This yields the desired contradiction.

Now, no matter the choice of shrinking soliton, the map $\nabla \phi: \t \to P_{-K_M}$ defines a diffeomorphism with image the fixed polytope $P_{-K_M}$
thanks to the normalisation given by \eqref{realMA}. The independence of $F_{\omega}$ from $\omega$ and the given expression then follows from the following computation, where
$\nabla \phi: \t \to P_{-K_M}$ is used as a change of coordinates:
\begin{equation*}
\begin{split}
	F_{\omega}(v) &= \int_M e^{-\langle\mu,\,v\rangle}\omega^n = \int_{\t \times T^{n}}e^{-\langle\nabla\phi(\xi),\,v \rangle}\det(\phi_{ij}(\xi))\,d\xi d\theta \\
		&= (2\pi)^n \int_{\t}e^{-\langle \nabla \phi (\xi), v \rangle}\det(\phi_{ij}(\xi))\,d\xi = (2\pi)^n \int_{P_{-K_M}} e^{-\langle x,v \rangle }\,dx.
\end{split}
\end{equation*}
\end{proof}

Thus, we henceforth drop the subscript $\omega$ from $F_{\omega}$ and $\Lambda_{\omega}$ when working in the toric category. The
functional $F:\Lambda\to\mathbb{R}$ is proper in this category, hence attains a critical point in $\Lambda$.

\begin{prop}[{\cite[Proof of Proposition 3.1]{charlie}}]\label{properr}
The functional $F(v)=(2\pi)^n \int_{P_{-K_M}} e^{-\langle v,\,x \rangle }\,dx$ is proper on $\Lambda$.
\end{prop}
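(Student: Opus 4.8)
The plan is to prove properness by showing that every sublevel set $\{v\in\Lambda:F(v)\leq A\}$ is compact, which for the continuous positive function $F$ is equivalent to properness. I would establish two complementary lower bounds: a global one forcing the sublevel sets to be bounded in $\mathfrak{t}^{*}$, and a boundary one forcing $F(v)\to+\infty$ as $v$ approaches $\partial\Lambda=\partial(C^{\vee})$, so that the sublevel sets are also closed in $\mathfrak{t}^{*}$. Throughout I use that $\Lambda=\operatorname{int}(C^{\vee})$ with $C=\mathcal{C}(P_{-K_M})$ a closed strongly convex cone (Lemma \ref{one}), that $0\in\operatorname{int}(P_{-K_M})$ (Lemma \ref{note}(i)), and the standard fact that, since $\{x\in C:|x|=1\}$ is compact, $\operatorname{int}(C^{\vee})$ is precisely the set of $v$ with $\langle v,x\rangle>0$ for all $x\in C\setminus\{0\}$.

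First I would prove boundedness of the sublevel sets. Fixing $\varepsilon>0$ with $B_{\varepsilon}(0)\subseteq P_{-K_M}$, discarding the rest of the domain and exploiting the symmetry $x\mapsto -x$ of the ball gives, for every $v$,
$$F(v)\geq (2\pi)^n\int_{B_{\varepsilon}(0)}e^{-\langle v,x\rangle}\,dx=(2\pi)^n\int_{B_{\varepsilon}(0)}\cosh(\langle v,x\rangle)\,dx\geq (2\pi)^n\int_{B_{\varepsilon}(0)}\tfrac12\langle v,x\rangle^2\,dx=c_{\varepsilon}\,|v|^2,$$
where $c_{\varepsilon}>0$ arises from $\int_{B_{\varepsilon}(0)}x_ix_j\,dx$ being a positive multiple of $\delta_{ij}$ by rotational symmetry. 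Hence $\{F\leq A\}\subseteq\{|v|^2\leq A/c_{\varepsilon}\}$ is bounded.

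Next I would prove the boundary blow-up. Given $v_0\in\partial(C^{\vee})=C^{\vee}\setminus\operatorname{int}(C^{\vee})$, the characterisation above produces a recession direction $w\in C\setminus\{0\}=\mathcal{C}(P_{-K_M})\setminus\{0\}$ with $\langle v_0,w\rangle=0$, normalised so that $|w|=1$. Since $w$ is a recession direction and $B_{\varepsilon}(0)\subseteq P_{-K_M}$, convexity of $P_{-K_M}$ shows that the half-cylinder $Z=\{\,y+tw:y\in B_{\varepsilon}(0)\cap w^{\perp},\ t\geq0\,\}$ lies in $P_{-K_M}$. Parametrising $x=y+tw$ (an orthonormal change of variables) and using $\langle v,w\rangle>0$ for $v\in\Lambda$,
$$F(v)\geq (2\pi)^n\int_{B_{\varepsilon}(0)\cap w^{\perp}}\Big(\int_0^{\infty}e^{-\langle v,y\rangle-t\langle v,w\rangle}\,dt\Big)\,dA(y)\geq\frac{c_w(v)}{\langle v,w\rangle},$$
where, for $v$ ranging over a bounded neighbourhood of $v_0$, the prefactor $c_w(v)$ is bounded below by a positive constant. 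As $v\to v_0$ within $\Lambda$ we have $\langle v,w\rangle\to\langle v_0,w\rangle=0$ from above, so $F(v)\to+\infty$.

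Finally I would assemble these. If $v_k\in\{F\leq A\}$ with $v_k\to v_0$, then $v_0\in C^{\vee}$ since $C^{\vee}$ is closed; the boundary estimate excludes $v_0\in\partial(C^{\vee})$ (else $F(v_k)\to\infty$), so $v_0\in\operatorname{int}(C^{\vee})=\Lambda$, where $F(v_0)<\infty$ by the finiteness already shown in Lemma \ref{two} and $F(v_0)\leq\liminf_k F(v_k)\leq A$ by Fatou's lemma. Thus $\{F\leq A\}$ is closed, and being bounded it is compact, which yields properness. I expect the cylinder estimate to be the main obstacle: it is the step that converts a single recession ray of $P_{-K_M}$ into a genuinely $n$-dimensional subregion of positive measure (using crucially that $0\in\operatorname{int}(P_{-K_M})$), on which the integrand decays only at the rate $\langle v,w\rangle\to0$, thereby capturing the divergence of $F$ at the boundary; the boundedness estimate and the concluding closedness argument are comparatively routine.
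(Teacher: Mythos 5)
Your proof is correct and follows essentially the same route as the argument the paper cites from \cite[Proof of Proposition 3.1]{charlie}: both exploit $0\in\operatorname{int}(P_{-K_M})$ (Lemma \ref{note}) to force $F(v)\to+\infty$ as $|v|\to\infty$, and both integrate along a half-cylinder in a recession direction $w\in\mathcal{C}(P_{-K_M})\setminus\{0\}$ with $\langle v_{0},w\rangle=0$ to force blow-up as $v\to\partial\Lambda=\partial(C^{\vee})$. The only cosmetic differences are your quadratic lower bound $F(v)\geq c_{\varepsilon}|v|^{2}$ in place of the exponential-growth bound one gets from a small ball displaced from the origin, and your Fatou-based closedness step, which harmlessly sidesteps verifying continuity of $F$ on $\Lambda$.
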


In general, such a critical point turns out to be unique and characterises the soliton vector field of a complete shrinking gradient K\"ahler-Ricci soliton.

\begin{theorem}[{\cite[Lemma 5.17]{cds}, \cite[Theorem 1.1]{caoo}}]\label{thmB13}
Let $(M,\,g,\,X)$ be a complete shrinking gradient K\"ahler-Ricci soliton with complex structure $J$, K\"ahler form $\omega$, and bounded Ricci curvature.
Then $JX\in\Lambda_{\omega}$, $F_{\omega}$ is strictly convex on $\Lambda_{\omega}$, and $JX$ is the unique critical point of $F_{\omega}$ in $\Lambda_{\omega}$.
\end{theorem}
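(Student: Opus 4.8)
The plan is to treat the three assertions in turn, working with the real torus $T$ in the isometry group of $(M,g)$ whose Lie algebra $\mathfrak{t}$ contains $JX$; such a torus exists because the boundedness of $\Ric$ guarantees that the flow of $JX$ is a holomorphic isometric flow, and one may take $T$ to be the closure of the one-parameter group it generates. Since $\{JX=0\}=\{X=0\}$ is compact (the zero set of the soliton field is compact once the curvature is bounded, as in the proof of Lemma \ref{batman}), the fixed-point set of $T$ is compact and the non-compact Duistermaat--Heckman formula \cite{wu} guarantees that $F_{\omega}$ converges on $\Lambda_{\omega}$. Throughout I normalise the moment map $\mu$ so that its $JX$-component is the soliton potential, i.e.\ $\langle\mu,JX\rangle=f$ (this is the normalisation \eqref{realMA} in the toric case). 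The first assertion is then immediate: since $-\omega\lrcorner JX=df$, the Hamiltonian potential of $JX$ equals $f$, which grows quadratically and is therefore proper and bounded below by Theorem \ref{theo-basic-prop-shrink}; hence $JX\in\Lambda_{\omega}$.

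For the strict convexity I would realise $F_{\omega}$ as the Laplace transform of the Duistermaat--Heckman measure $\mathfrak{m}:=\mu_{*}(\omega^{n})$ on $\mathfrak{t}^{*}$, which is the change of variables performed in Lemma \ref{two}, so that
\[
F_{\omega}(v)=\int_{\mathfrak{t}^{*}}e^{-\langle\xi,v\rangle}\,d\mathfrak{m}(\xi),\qquad \operatorname{Hess}F_{\omega}(v)=\int_{\mathfrak{t}^{*}}\xi\otimes\xi\,e^{-\langle\xi,v\rangle}\,d\mathfrak{m}(\xi).
\]
The Hessian is positive definite as soon as the support $\overline{\mu(M)}$ of $\mathfrak{m}$ is not contained in an affine hyperplane of $\mathfrak{t}^{*}$; but if $\mu(M)$ were orthogonal to some $0\neq w\in\mathfrak{t}$, then $\langle\mu,w\rangle$ would be constant, forcing $\omega\lrcorner w=0$ and hence $w=0$ by nondegeneracy of $\omega$, contradicting the effectiveness of the $T$-action. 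Thus $F_{\omega}$ is strictly convex on the open convex cone $\Lambda_{\omega}=\operatorname{int}(\mathcal{C}(\mu(M))^{\vee})$ of Proposition \ref{identify}, and in particular has at most one critical point there.

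It remains to show that $JX$ is that critical point. Differentiating $F_{\omega}$ at $v=JX$ in a direction $w\in\mathfrak{t}$ and using $\langle\mu,JX\rangle=f$ gives
\[
dF_{\omega}|_{JX}(w)=-\int_{M}u_{w}\,e^{-f}\,\omega^{n},
\]
where $u_{w}=\langle\mu,w\rangle$ is the Hamiltonian potential of $w$. The soliton equation \eqref{krseqn} forces every such potential to be an eigenfunction of the drift Laplacian $\Delta_{f}:=\Delta-\langle\nabla f,\nabla\,\cdot\,\rangle$: this is precisely the content of the identity in Lemma \ref{normal}, which upon inserting $\Delta=2\Delta_{\omega}$ and the chosen normalisation becomes
\[
\Delta_{f}u_{w}=-2\,u_{w}.
\]
Since $\Delta_{f}$ is self-adjoint with respect to the weighted measure $e^{-f}\omega^{n}$ and annihilates constants, its weighted integral vanishes, so that
\[
\int_{M}u_{w}\,e^{-f}\,\omega^{n}=-\tfrac{1}{2}\int_{M}(\Delta_{f}u_{w})\,e^{-f}\,\omega^{n}=0.
\]
Hence $dF_{\omega}|_{JX}\equiv0$, and by the strict convexity just established $JX$ is the unique critical point of $F_{\omega}$ in $\Lambda_{\omega}$.

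The identity $\Delta_{f}u_{w}=-2u_{w}$ is a pointwise consequence of \eqref{krseqn} and the fact that $w$ is holomorphic and Killing, so it is insensitive to the toric hypothesis under which Lemma \ref{normal} was stated; only the correct normalisation of $\mu$ (equivalently, the additive constant in $f$) must be carried along. I expect the genuine difficulty to be entirely analytic and located at infinity: one must justify the convergence of $F_{\omega}$ and of its first two derivatives on $\Lambda_{\omega}$, the differentiation under the integral sign, and above all the vanishing of the boundary terms in the weighted integration by parts over the exhaustion by sublevel sets of $f$. All of these rest on the Gaussian decay of $e^{-f}$ coming from the quadratic growth of $f$ in Theorem \ref{theo-basic-prop-shrink}, together with the at most linear growth of the moment-map components $u_{w}$; this is exactly the point at which the boundedness hypothesis on the curvature is indispensable, since it both renders the fixed-point set compact and underlies the growth estimates for shrinkers.
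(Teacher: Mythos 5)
Your argument is correct and coincides with the proof behind the paper's citation: the paper states Theorem~\ref{thmB13} without reproving it, importing it from \cite[Lemma 5.17]{cds} (itself a Tian--Zhu-type variational argument, cf.~\cite{Tian-Zhu-II}) and \cite[Theorem 1.1]{caoo}, and your three steps --- $JX\in\Lambda_{\omega}$ from the quadratic growth of $f$ in Theorem~\ref{theo-basic-prop-shrink}, strict convexity from the positive-definite Hessian of the Laplace transform of $\mu_{*}(\omega^{n})$ together with effectiveness of the torus action and convergence via \cite{wu}, and $dF_{\omega}|_{JX}=0$ from the weighted identity $\Delta_{f}u_{w}=-2u_{w}$ (the general-torus form of Lemma~\ref{normal}, cf.~\cite[Definition 5.16]{cds}) combined with Gaussian-weighted integration by parts --- are exactly those of that source. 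Two harmless slips worth noting: degeneracy of the Hessian corresponds to the support of $\mu_{*}(\omega^{n})$ lying in a \emph{linear} hyperplane of $\mathfrak{t}^{*}$ (your stronger ``no affine hyperplane'' argument covers this a fortiori), and the Hamiltonian potentials $u_{w}$ grow at most quadratically rather than linearly (e.g.\ $u_{JX}=f$ itself), which still loses to the Gaussian weight $e^{-f}$ and so affects nothing.
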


Having established in Lemmas \ref{one} and \ref{two} that in the toric category the weighted volume functional $F$ and its domain $\Lambda$ are determined solely by the polytope $P_{-K_{M}}$ which itself, by Lemma \ref{note}, depends only on the torus action on $M$ (i.e., is independent of the choice of shrinking soliton), and having an explicit expression for $F$ given by Lemma \ref{two},
after using the torus action to identify $P_{-K_{M}}$ via \eqref{xmas}, we can determine explicitly the soliton vector field of a hypothetical toric shrinking gradient K\"ahler-Ricci soliton on $M$.
We illustrate how to do this in the following examples.

\begin{example}
Consider $\P^1$ with the $\Cstar$-action given by $\lambda \cdot [Z_0:Z_1] = [\lambda Z_0: Z_1]$. Then its torus-invariant divisors are $D_0 = [0:1]$ and $D_\infty = [1:0]$. The corresponding fan in $\R$ is given by $\Sigma_{\P^1} = \{0, [0, \infty), (-\infty, 0]\}$ and $-K_{\P^1} = D_0 + D_1$, the associated polyhedron $P_{-K_{\P^{1}}}$ of which can naturally be identified with the interval $[-1,1]\subset\R$. The Fubini-Study metric $\omega_{\mathbb{P}^{1}}$ is K\"ahler-Einstein and in particular, $2\omega_{\mathbb{P}^{1}}$
is a shrinking gradient K\"ahler-Ricci soliton on $\mathbb{P}^{1}$ with soliton vector field $X=0$. Working with $2\omega_{\mathbb{P}^{1}} \in 2\pi c_1(-K_{\P^1})$, on the dense orbit $\Cstar \subset \P^1$, $2\omega_{\mathbb{P}^{1}}$ has K\"ahler potential
	\begin{equation*}
		\phi_{2\omega_{\mathbb{P}^{1}}}:= \log\left(1 + |z|^2\right) - \frac{1}{2}\log\left(4|z|^2\right) = \log\left(e^{2\xi} + 1\right) - \xi -\log(2),
	\end{equation*}
so that $\omega_{\mathbb{P}^1} = 2i\p\bp\phi_{2\omega_{\mathbb{P}^{1}}}$. It is then straightforward to verify that $\phi_{2\omega_{\mathbb{P}^{1}}}$ satisfies \eqref{realMA} with $b_X = 0$ and that the image of $\frac{\p\phi_{2\omega_{\mathbb{P}^{1}}}}{\p\xi}$ is the interval $[-1,1]$. The weighted volume functional is then given by
		\begin{equation*}
		F_{\P^1}(v) =2\pi\int_{-1}^1 e^{-vx}\,dx.
	\end{equation*}
This is defined for all $v \in \R$ and indeed, the asymptotic cone of the compact polytope $[-1,1]$ is just the point $0$ so that $C^{\vee} = \R$. Clearly $F'(v) = 0$ if and only if $v = 0$,
as expected.
\end{example}

\begin{example}
Consider $\C$ endowed with the standard $\Cstar$-action. Then there is only one torus-invariant divisor, namely $D = \{0\}$. The fan in $\R$ is simply $\Sigma_{\C} = \{0, [0, \infty)\}$ and $-K_{\C} = D$ with the corresponding polyhedron given by $P_{-K_{\C}} = [-1,\infty)$. On the dense orbit $\Cstar \subset \C$, the Euclidean metric $\omega_{\mathbb{C}}$ has K\"ahler potential
		 \begin{equation*}
	 	\phi_{\omega_{\mathbb{C}}} = \frac{1}{4}|z|^2 - \log(|z|^2) - 1 = \frac{1}{4}e^{2\xi} - \xi -1.
	 \end{equation*}
This satisfies \eqref{realMA} with $b_X = 1$ and the image of $\frac{\p \phi_{\omega_{\mathbb{C}}}}{\p\xi}$ is $[-1,\infty)$. The asymptotic cone $C$ of $[-1, \infty)$ is given by $[0, \infty)$ and
accordingly, the weighted volume functional
		 \begin{equation*}
		F_{\C}(v) = 2\pi\int_{-1}^\infty e^{-vx}\,dx
	\end{equation*}
is only defined on the interior of the dual cone $C^{\vee}$, namely $(0, \infty)$. We compute:
	\begin{equation*}
		F'(v) = -2\pi\int_{-1}^\infty x e^{-vx}\,dx = \frac{e^{v}}{v^2}(1-v).
	\end{equation*}
Hence, as expected, $F_{\C}$ has a unique critical point at $v = 1$, with the corresponding soliton vector field on $\mathbb{C}$ given by
		\begin{equation*}
		X = \frac{\p}{\p\xi} = r\frac{\p}{\p r},
	\end{equation*}
where $z = re^{i\theta} \in \C$ for $r>0$ and $\xi = \log(r)$.
\end{example}

\begin{example}\label{vf-id2}
Next we consider the Cartesian product $\C\times\P^1$ of the previous two examples. We equip $\C\times\P^1$ with the product $\Cstarsqr$-action and denote by $\t_1$ and $\t_2$ the Lie algebras of the real $S^1$'s that act on $\C$ and $\P^1$, respectively. Then we have an obvious solution to \eqref{realMA} given by the product metric $\omega_{\mathbb{C}}+2\omega_{\mathbb{P}^{1}}$ together with the soliton vector field $X_{\C\times \P^1} = X_{\C} + X_{\P^{1}}=r \frac{\p}{\p r}$ with $r=|z|$, $z$ the complex coordinate on the $\C$-factor.
Explicitly, the fan $\Sigma_{\C \times \P^1}$ comprises products $\sigma_1 \times \sigma_2 \subset \t_1 \oplus \t_2$, where $\sigma_1 \in \Sigma_{\C}$ and $\sigma_2\in \Sigma_{\P^1}$. The polyhedron $P_{-K_{\C \times \P^1}} \subset \t = \t_1 \oplus \t_2$ can be identified with the subset of $\R^2$ defined by the inequalities	
\begin{equation*}
P_{-K_{\C \times \P^1}} = \left\{ (x_1, x_2) \in \R^2 \: | \: x_1 \geq -1, \: -1 \leq x_2 \leq 1 \right\}.
\end{equation*}
From this, one can easily see that if $v = v_1 + v_2$ with $v_1 \in \t_1$ and $v_2 \in \t_2$, then
		\begin{equation*}
		F_{\C \times \P^1}(v) = F_{\C}(v_1) F_{\P^1}(v_2).
	\end{equation*}
The fact that $F_{\C}$ and $F_{\P^1}$ are convex and positive implies that $F'_{\C \times \P^1}(v) = 0$ if and only if $F'_{\C}(v_1) = F'_{\P^1}(v_2) = 0$, as expected.
\end{example}

\begin{example}\label{vf-id}
Let $M=\operatorname{Bl}_{p}(\C \times \P^1)$ denote the blowup of a fixed point $p$ of the $(\mathbb{C}^{*})^{2}$-action on $\C\times\mathbb{P}^{1}$ and write $J$ for the complex structure on $M$.
Then $M$ inherits a natural $\Cstarsqr$-action with respect to which the blowdown map $\pi:M\to\C\times\P^1$ is $(\mathbb{C}^{*})^{2}$-equivariant. In terms of the toric data, the exceptional divisor $E$ of $\pi$ defines an additional invariant divisor and the polyhedron $P_{\C \times \P^1}$ is modified accordingly:
	\begin{equation*}
			P_{-K_{M}} = \left\{ (x_1, x_2) \in \R^2 \: | \: x_1 \geq -1, \: -1 \leq x_2 \leq 1, \: x_1 + x_2 \geq -1 \right\}.
	\end{equation*}
Here, the new face with inner normal $\nu_E = (1,\,1)$ corresponds to $E$. Define two auxiliary functions $F_1$ and $F_2$ of a real variable $t>0$ by
	\begin{equation*}
		F_1(t) =\int_{P_{-K_M}} x_1 e^{-t(2x_1 + x_2)}\,dx_1dx_2,\qquad F_2(t) =\int_{P_{-K_M}} x_2 e^{-t(2x_1 + x_2)}\,dx_1dx_2.
	\end{equation*}
These functions are, up to a scaling factor of $-(2\pi)^{-2}$, the components of the gradient of
the weighted volume functional $F_{M}:\Lambda\to\R$ of $M$ restricted to the ray generated by $(2,\,1) \in \t$.
Thus, if there exists some $\lambda>0$ such that $F_1(\lambda)=F_2(\lambda) = 0$, then the point $(2\lambda, \lambda) \in \R^2$ would be a critical point of $F_{M}$. First, we claim that $F_2(t) \equiv 0$. Indeed, computing directly, we see that
\begin{equation*}
\begin{split}
		F_2(t) &= \int_0^1\int_{-1}^{\infty}  x_2 e^{-t(2x_1 + x_2)}\,dx_1dx_2 + \int_{-1}^{0}\int_{-(x_2+1)}^{\infty } x_2 e^{-t(2x_1 + x_2)}\,dx_1dx_2 \\
				&= \frac{t^{-3}}{2}e^t(e^t -1) - \frac{t^{-2}}{2}e^t - \frac{t^{-3}}{2}e^t(e^t -1) + \frac{t^{-2}}{2}e^t = 0.
\end{split}
\end{equation*}
Since $F_{M}(t(2,1))$ is proper and convex as a function of $t>0$ (cf.~Proposition \ref{properr} and Theorem \ref{thmB13}), this implies
that there is a $\lambda>0$ such that both $F_1(\lambda)$ and $F_2(\lambda)$ vanish simultaneously. We next determine the value of $\lambda$. Since
\begin{equation*}
\begin{split}		
F_1(t) &= - \int_{-1}^{0}\int_{-(x_1+1)}^{1}x_1e^{-t(2x_1 + x_2)} dx_2 dx_1 - \int_{0}^{\infty}\int_{-1}^{1}x_1e^{-t(2x_1 + x_2)} dx_2 dx_1 \\
				&= \frac{t^{-2}}{2}e^t + \frac{t^{-3}}{2}\sinh(t) - e^t\left(t^{-2}e^t + t^{-3}(e^t -1) \right)- \frac{t^{-3}}{2}\sinh(t)  \\
				&= \frac{t^{-3}}{2}e^t \left(2e^t(1-t) -(2-t)\right),
\end{split}
\end{equation*}
we see that $F_1(\lambda) = 0$ for $\lambda$ the $x$-coordinate of the unique non-zero point of intersection of the graphs of $G_1(t) = 2e^t(1-t)$ and $G_2(t) = 2-t$.
In particular, $\lambda$ cannot be equal to $1,\,2,\,\frac{1}{2},$ or indeed any algebraic number. Numerical approximations in fact give
$\lambda \approx0.64$.

Let $(z_1,\,z_2)$ be complex coordinates on the dense orbit $\Cstarsqr \subset \Cstar \times \P^1 \subset M$. Writing $z_j = r_j e^{i\theta_j}$ with $r_{j}>0$, set $\xi_j = \log(r_j)$ as before.
Then the soliton vector field $X$ on $M$ may be written as
		\begin{equation}\label{vfield}
		X = \lambda\left(2 \frac{\p}{\p\xi^1} + \frac{\p}{\p\xi^2} \right) = \lambda\left( 2r_1\frac{\p}{\p r_1} + r_2 \frac{\p}{\p r_2}\right).
	\end{equation}
\end{example}

\section{Proof of Theorem \ref{mainthm1}}\label{proof1}

Consider a complete non-compact shrinking gradient K\"ahler-Ricci soliton $(M,\,g,\,X)$
with bounded scalar curvature with complex structure $J$ and with soliton vector field
$X=\nabla^{g}f$ for a smooth real-valued function $f:M\to\mathbb{R}$. Then $f$ is proper and bounded from below (cf.~Theorem \ref{theo-basic-prop-shrink}), hence attains a minimum,
and $g$ complete implies that $X$ is complete \cite{Zhang-Com-Ricci-Sol}. Let $G^{X}_{0}$ denote the connected component of the identity of the holomorphic isometries of $(M,\,J,\,g)$ that
commute with the flow of $X$. Since $g$ has bounded Ricci curvature, $G^{X}_{0}$ is a compact Lie group
by \cite[Lemma 5.12]{cds} and $X$ being complete implies that $JX$ is complete by \cite[Lemma 2.35]{cds}. Moreover, $JX$ is Killing by \cite[Lemma 2.3.8]{fut2}. Hence the closure of the flow of $JX$ in $G^{X}_{0}$ yields the holomorphic isometric action of a real torus $T$ on $(M,\,J,\,g)$ with Lie algebra $\mathfrak{t}$ containing $JX$. Compactness of the zero set of $X$ \cite[Lemma 2.26]{cds} and hence $JX$ implies that the fixed point set of $T$ is compact. Moreover, as $f$ attains a minimum, $T$ will have at least one fixed point.
Finally, as $H^{1}(M)=0$ by \cite{wyliee}, $T$ will act on $M$ in a Hamiltonian fashion.

Using results from $J$-holomorphic curves (cf.~Section \ref{Jholo}), we identify the candidate non-compact complex surfaces
that may admit a complete shrinking gradient K\"ahler-Ricci soliton with bounded scalar curvature whose soliton vector field has an integral curve along which the scalar curvature does not tend to zero. We first work under the assumption of simple connectedness and classify up to diffeomorphism.

\begin{prop}[Smooth classification]\label{smoothclass}
Let $(M,\,g,\,X)$ be a two-dimensional simply connected complete non-compact shrinking gradient K\"ahler-Ricci soliton with
bounded scalar curvature $\RR_{g}$ with $X=\nabla^{g}f$ for some smooth function
$f:M\rightarrow\mathbb{R}$. Assume that $X$ has an integral curve along which $\RR_{g}\not\to0$.
Then $M$ is diffeomorphic to either $\mathbb{C}\times\mathbb{P}^{1}$ or to $\operatorname{Bl}_{p}(\mathbb{C}\times\mathbb{P}^{1})$, that is, the blowup of $\mathbb{C}\times\mathbb{P}^{1}$ at one point $p$. In the former case, the zero set of $X$ is contained in a unique $\mathbb{P}^{1}$ and in the latter case,
in the pre-image under the blowup map of the $\mathbb{P}^{1}$-fibre containing the blowup point.
\end{prop}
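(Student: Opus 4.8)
The plan is to realise the strategy sketched in the introduction: manufacture an asymptotic $\P^1$-fibration near infinity out of Naber's cylindrical asymptotics, extend it across the minimal model by a continuity argument, and then read off the diffeomorphism type. First I would fix the standing structure on $(M,g,X)$. The bound on $\RR_g$ upgrades to a bound on the full curvature tensor by \cite{wang22}, and by the results of \cite{cds} recalled at the start of this section the flow of $JX$ closes up to a holomorphic isometric action of a real torus $T$ whose fixed-point set is compact (the zero set of $X$ being compact). The hypothesis that $X$ has an integral curve $\gamma$ along which $\RR_g\not\to0$ places us in the regime complementary to Lemma \ref{batman}, and here the key input is Naber's theorem \cite{naber}: on balls of fixed large radius centred at points $\gamma(t_i)$ receding to infinity and lying far from $\{X=0\}$, $(M,g)$ is $C^\infty$-close to the product cylinder $\C\times\P^1$. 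In particular, on a sufficiently large compact set $K\subset M$ the metric $g$, its covariant derivative, and the complex structure $J$ satisfy the smallness hypothesis \eqref{difference1} relative to the cylinder data.

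Next I would transplant the $\P^1$-slices of the model into $M$. Taking the embedded holomorphic sphere $\{\mathrm{pt}\}\times\P^1$ of the cylinder, which has trivial self-intersection, Corollary \ref{curve} produces a genuine embedded $J$-holomorphic sphere $C\subset M$ in the nearby homology class with $C.C=0$. Proposition \ref{local1} then foliates a neighbourhood of $C$ by pairwise disjoint embedded $J$-holomorphic spheres in the class $[C]$, and translating $C$ by the circle subgroup of $T$ generated by $JX$ I would sweep out the entire end of $M$. This identifies the region near infinity with $\Cstar\times\P^1$, foliated by the $\P^1$-factors.

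The heart of the argument is the global extension of this foliation, and this is the step I expect to be the main obstacle. Since complete shrinking solitons of bounded scalar curvature have finite topological type \cite{fang}, I would blow down the finitely many $(-1)$-curves of $M$ to reach its minimal model $M_{\min}$, in which $[C]$ survives with zero self-intersection. Parametrising the embedded $J$-holomorphic spheres in class $[C]$ and pushing them inward, a continuity argument together with Gromov's compactness theorem \cite{dusa1} yields a limiting $J$-holomorphic cycle at each stage; the difficulty is to prevent bubbling. This is precisely where passing to $M_{\min}$ is essential: a limit sphere of zero self-intersection and minimal area in its class cannot split off a nonconstant bubble without producing a rational curve of negative self-intersection, which is impossible on a minimal surface. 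Hence every limit is again a single embedded sphere, the foliation extends across all of $M_{\min}$, and $M_{\min}$ is exhibited as a smooth $\P^1$-bundle over a non-compact surface $S$. Compactifying the base and invoking the standing assumption $\pi_1(M)=0$ forces $S$ to compactify to an $S^2$, so that $M_{\min}$ is diffeomorphic to $\R^2\times S^2$, i.e.\ to $\C\times\P^1$.

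Finally I would control the blowups and locate the zero set. As $M$ is recovered from $M_{\min}$ by blowing up finitely many points, these points must lie in the zero set of the holomorphic vector field that $X$ induces on $M_{\min}$, and an analysis of the Hamiltonian $T$-action shows this zero set is contained in a single fibre $\P^1$. The sign of $-K_M$ dictated by the shrinking equation \eqref{krseqn} permits $M$ to contain only $(-1)$-curves, which rules out iterated blowups over a single point and thereby limits the number of blow-up points to one. This produces exactly the two diffeomorphism types $\C\times\P^1$ and $\operatorname{Bl}_{p}(\C\times\P^1)$. The same analysis of the induced vector field pins down $\{X=0\}$: in the cylinder case it lies in the distinguished $\P^1$, and in the blowup case in the pre-image under the blowdown map of the fibre through the blown-up point, as claimed.
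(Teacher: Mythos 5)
Your route faithfully mirrors the paper's (which is unsurprising, since it follows the introduction's outline), but two load-bearing steps are asserted rather than argued, and as stated they do not go through. First, Naber's theorem does not deliver the cylinder directly: \cite[Corollary 4.1]{naber}, combined with the classification of three-dimensional shrinkers in \cite{wang22}, only yields that the pointed limits $(M,\,g,\,x_{i})$ subconverge to flat $\mathbb{R}^{4}$, a finite quotient of $\mathbb{R}\times S^{3}$, $\mathbb{R}^{2}\times S^{2}$, or the $\mathbb{Z}_{2}$-quotient $\mathbb{R}\times((S^{2}\times\mathbb{R})/\mathbb{Z}_{2})$, with the standard product metrics. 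The paper must then show the limit is K\"ahler (Shi's estimates plus the injectivity radius lower bound, feeding into \cite[Theorem 3.22]{chow}), discard $\mathbb{R}^{4}$ using $\RR_{g}(x_{i})>\varepsilon$, rule out $\mathbb{R}\times S^{3}$ by a holonomy argument showing it carries no K\"ahler-compatible complex structure, prove by a second holonomy argument (Claim \ref{uniquee}) that the only compatible complex structure on $\mathbb{R}^{2}\times S^{2}$ is the standard one $\widehat{J}$, and exclude the $\mathbb{Z}_{2}$-quotient because it would contain $\mathbb{R}P^{2}$ as a complex submanifold. This matters because Corollary \ref{curve} consumes smallness of $|J_{i}-\widehat{J}|$, not just metric closeness, so "close to the cylinder" with controlled complex structure is exactly what has to be earned here and is missing from your sketch.

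Second, the sweep of the end cannot be done with "the circle subgroup of $T$ generated by $JX$": generically $JX$ generates a dense winding in $T$ rather than a circle, and in any case a circle action has compact orbits, so it cannot carry the sphere $C$ to infinity. The paper instead selects $JY\in\Lambda_{\omega}\subseteq\mathfrak{t}$ via Proposition \ref{identify}, so that the Hamiltonian $u_{Y}$ is proper and bounded below, passes to the holomorphic $\mathbb{C}^{*}$-action generated by $Y$ and $JY$, proves freeness on the orbit of $C$ after quotienting a finite cyclic stabiliser (Claims \ref{bollox}--\ref{free}), and uses the gradient flow of $u_{Y}$ — positive flow escaping along the unique end \cite[Theorem 0.1]{munteanu}, negative flow accumulating in the compact zero set of $Y$ \cite[Proposition 2.28]{cds} — to show the map $\Phi$ of \eqref{mapp} is a biholomorphism onto the complement of a compact analytic set. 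Two smaller corrections: in the bubbling step, rational curves of negative self-intersection are \emph{not} impossible on a minimal surface ($(-2)$-curves abound); the paper's exclusion first uses $-K_{M}\cdot\hat{C}>0$ from the soliton equation together with adjunction to force $[C_{i}].[C_{i}]\geq-1$, and only then invokes minimality to remove $-1$. Likewise, ruling out iterated blowups does not by itself limit you to one blowup point; you also need that blowing up two distinct points of the fixed fibre $L_{0}$ would turn its proper transform into a $(-2)$-curve, again excluded by the positivity of $-K_{M}$ on curves. The remainder of your outline — finite topological type via \cite{fang}, the continuity argument with Gromov compactness on $M_{\min}$, the compactification and simple-connectedness argument identifying the base as $S^{2}$, and locating $\{X=0\}$ in a single fibre via the induced isometric $S^{1}$-action on the base with exactly two fixed points \cite{koby} — matches the paper.
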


\begin{proof}
Since there exists a point of $M$ where $\RR_{g}\neq0$, $g$ is non-flat and so globally we know that $\RR_{g}>0$ \cite{pigola1}. As $X$ has an integral curve along which $\RR_{g}\not\to0$
by assumption, this means that there exists $\varepsilon>0$ and a sequence of points $\{x_{i}\}_{i}$
lying along this integral curve going off to infinity as $i\to\infty$ such that $\RR_{g}(x_{i})>\varepsilon$.
By assumption $\RR_{g}$ is bounded, thus we read from \cite[Theorem 1.3]{wang22} that the norm of the full curvature tensor $\operatorname{Rm}(g)$ of $g$ is bounded.
It subsequently follows from \cite[Corollary 4.1]{naber} and the classification of real three-dimensional complete shrinking gradient Ricci solitons \cite[Theorem 1.2]{wang22} that the sequence of pointed manifolds $(M,\,g,\,x_{i})$, after passing to a subsequence if necessary, converges in the smooth pointed Cheeger-Gromov sense to
$(\widehat{M},\,\hat{g},\,\hat{p})$, where $\hat{p}\in\widehat{M}$ is a base point and
$(\widehat{M},\,\hat{g})$ is isometric to $\mathbb{R}^{4}$ endowed with the flat metric, or
$\widehat{M}$ is diffeomorphic to $\mathbb{R}^{2}\times S^{2}$ or to the $\mathbb{Z}_{2}$-quotient
$\mathbb{R}\times((S^{2}\times\mathbb{R})/\mathbb{Z}_{2})$ where $\mathbb{Z}_{2}$ flips both $S^{2}$ and $\mathbb{R}$,
or to a quotient of $\mathbb{R}\times S^{3}$ by a finite group acting on the $S^{3}$-factor, and $\hat{g}$ is the standard product metric on these spaces.
Write $\widehat{\nabla}$ for the Levi-Civita connection of $\hat{g}$. What we have then is a sequence of relatively compact open subsets $U_{i}\subset\subset\widehat{M}$ exhausting $\widehat{M}$ and containing $\hat{p}$, together with a sequence of smooth maps $\phi_{i}:U_{i}\to M$ that are diffeomorphisms onto their image, such that $\phi_{i}(\hat{p})=x_{i}$ and
\begin{equation}\label{hot}
|\widehat{\nabla}^{k}(\phi_{i}^{*}g-\hat{g})|_{\hat{g}}\to0\qquad\textrm{for all $k\geq0,$}
\end{equation}
smoothly locally on $\widehat{M}$ as $i\to\infty$. Now, the aforementioned boundedness of $|\operatorname{Rm}(g)|_{g}$ implies that all of the covariant derivatives with respect to $g$ of $\operatorname{Rm}(g)$ are bounded by Shi's derivative estimates. Furthermore, by \cite{naber}, $(M,\,g)$ has a lower bound on its injectivity radius. The conditions of \cite[Theorem 3.22]{chow} are therefore satisfied and consequently we can assert that $(\widehat{M},\,\hat{g})$ is K\"ahler. Since $\RR_{g}(x_{i})>\varepsilon$ for all $i$, $(\widehat{M},\,\hat{g})$ is clearly not flat and so the limit $\mathbb{R}^{4}$ can be discarded. Lifting to the universal cover of the remaining candidates for $\widehat{M}$, we obtain a K\"ahler structure on $\mathbb{R}^{2}\times
S^{2}$ or on $\mathbb{R}\times S^{3}$ with K\"ahler metric we still denote by $\hat{g}$. The following claim allows us to discount the case
$\mathbb{R}\times S^{3}$ next.

\begin{claim}
$\mathbb{R}\times S^{3}$ does not admit a complex structure with respect to which the product metric $\hat{g}$ is K\"ahler.
\end{claim}

\begin{proof}
Suppose to the contrary that $\mathbb{R}\times S^{3}$ admitted a complex structure $J$ with respect to which the product metric $\hat{g}$ is K\"ahler.
Let $q=(x,\,y)\in \mathbb{R}\times S^{3}$. Then we have the decomposition $T_{q}\widehat{M}=T_{x}\mathbb{R}\oplus T_{y}S^{3}$.
Let $Y\in T_{x}\mathbb{R}$ be a unit vector. Then $J$ compatible with $\hat{g}$ implies that\linebreak $\hat{g}(Y,\,JY)$ = 0 which in turn implies that $JY \in T_{y}S^3$.
Let $H$ be an arbitrary holonomy transformation with respect to $\hat{g}$ of $T_{q}\widehat{M}$ and act by $H$ on $JY$.
Then since the holonomy of $\hat{g}$ is trivial on the $\mathbb{R}$-factor and $J$ is parallel,
we see that $H(JY)=J(HY) = JY$, i.e., $H$ fixes $JY$, forcing $JY=0$. This is a contradiction.
\end{proof}

We therefore arrive at the fact that $\widehat{M}$ is covered by $\mathbb{R}^{2}\times S^{2}$. We next identify the $\hat{g}$-compatible complex structure on this space.

\begin{claim}\label{uniquee}
Up to a sign on each factor, the only complex structure on $\mathbb{R}^{2}\times
S^{2}$ with respect to which $\hat{g}$ is K\"ahler is the standard complex structure $\widehat{J}$ on $\mathbb{C}\times\mathbb{P}^{1}$.
\end{claim}

\begin{proof}
Let $q=(x,\,y)\in \mathbb{R}^{2}\times S^{2}\approx\widehat{M}$.
Then we have the decomposition $T_{q}\widehat{M}= T_{x}\mathbb{R}^{2}\oplus T_{y}S^{2}$.
Suppose that there exists another complex structure $\widetilde{J}$ on $\mathbb{R}^{2}\times S^{2}$ with respect to which $\hat{g}$ is K\"ahler and
let $Y\in T_{y}S^{2}$ be a unit vector. Then with respect to the aforementioned decomposition, we can write $\widetilde{J}Y=a\widehat{J}Y\oplus U$ for some $a\in\mathbb{R}$, $|a|\leq 1$,
and $U\in T_{x}\mathbb{R}^{2}$. We parallel transport the quadruple $\{Y,\,\widetilde{J}Y,\,\widehat{J}Y,\,U\}$ around a non-trivial closed loop in the $S^{2}$-fibre of $\widehat{M}$ containing $q$ using the connection $\widehat{\nabla}$. As $\hat{g}$ is flat in the $\mathbb{R}^{2}$-direction, $U$ will remain unchanged under this action. Moreover, $\widetilde{J}$ and $\widehat{J}$ are parallel
with respect to $\hat{g}$. Thus, as the holonomy of $S^{2}$ is $SO(2)$, we find that for every unit vector $Z\in T_{y}S^{2}$, $\widetilde{J}Z=a\widehat{J}Z\oplus U$, leaving us with $U=0$ and $|a|=1$.

We next consider a unit vector $Y\in T_{x}\mathbb{R}^{2}$. Then with respect to the splitting $T_{q}\widehat{M}=T_{x}\mathbb{R}^{2}\oplus T_{y}S^{2}$, we have that
$\widetilde{J}Y=U\oplus b\widehat{J}Y$ for some $b\in\mathbb{R}$, $|b|\leq 1$, and $U\in T_{y}S^{2}$. Arguing as before, parallel transport in the $S^{2}$-fibre of $\widehat{M}$ containing $q$ using $\widehat{\nabla}$ demonstrates that $\widetilde{J}Y=V\oplus b\widehat{J}Y$ for all $V\in T_{y}S^{2}$, forcing $V=0$ and $|b|=1$. From this, the assertion follows.
\end{proof}

Thus, without loss of generality, we may assume that the aforementioned K\"ahler structure on $\mathbb{R}^{2}\times
S^{2}$ is standard, i.e., simply $(\hat{g},\,\widehat{J})$. It then follows that
$\widehat{M}$ is biholomorphic to $\mathbb{C}\times\mathbb{P}^{1}$ as the $\mathbb{Z}_{2}$-quotient thereof, acting freely and holomorphically, would
introduce an $\mathbb{R}P^{2}$ as a complex submanifold yielding a contradiction. Returning to \eqref{hot}, set $J_{i}:=\phi_{i}^{*}J$ and $g_{i}:=\phi^{*}_{i}g$.
Arguing as in \cite[Proof of Theorem 3.22]{chow} (see also \cite[pp.16--18]{ruannn}), we see that $J_{i}$ converges smoothly locally to a $\hat{g}$-parallel complex structure
$J_{\infty}$ on $\widehat{M}$ which by Claim \ref{uniquee}, we can without loss of generality take to be equal to $\widehat{J}$.

Fix a large ball $B_{R}(\hat{p},\,\hat{g})\subset\widehat{M}$ of radius $R>0$ centred at $\hat{p}$ with respect to $\hat{g}$ and let $\hat{u}:\mathbb{P}^{1}\to\mathbb{C}$ denote the unique $\widehat{J}$-holomorphic sphere passing through $\hat{p}$. Since $|J_{i}-\widehat{J}|_{\hat{g}}\to0$ as $i\to\infty$, by Corollary \ref{curve}, for
$i$ sufficiently large, $\hat{u}$ may be deformed to a $J_{i}$-holomorphic sphere
$u:\mathbb{P}^{1}\to\widehat{M}$ with zero self-intersection. By the estimate given in Corollary \ref{curve}, the image of $u:\mathbb{P}^{1}\to\widehat{M}$
will eventually be contained in $B_{R}(\hat{p},\,\hat{g})$. Thus, outside any fixed compact subset $K$ of $M$,
$v:=\phi_{i}^{-1}\circ u:\mathbb{P}^{1}\to M$ will define a $J$-holomorphic curve in $M$ with trivial normal bundle and zero self-intersection lying in $M\setminus K$
for $i$ sufficiently large.

Henceforth we write $C:=v(\mathbb{P}^{1})$. Then $C.C=0$. Recall the real torus $T$ acting on $M$ introduced at the beginning of this section and let $\omega$ denote the K\"ahler form of $g$. The function $f$, the Hamiltonian potential of $JX$, is, as the soliton potential, proper and bounded from below (cf.~Theorem \ref{theo-basic-prop-shrink}). Consequently, Proposition \ref{identify} allows us to find an element $JY\in\Lambda_{\omega}\subseteq\mathfrak{t}$ whose flow generates an $S^{1}$-action and that admits a real Hamiltonian potential $u_{Y}$ that is proper and bounded from below. Since the fixed point set of $T$ is non-empty and contained in the zero set of $X$, a compact subset, \cite[Proposition 1.2]{wu} implies that the zero set of $Y$ is non-empty and compact. Moreover, by \cite[Lemma 2.34]{cds}, we also know that $Y$ and $JY$ are complete. Hence we can define for all time the holomorphic flow of the vector fields $Y$ and $JY$ which we denote by $\phi_{t}^{Y}$ and $\phi_{t}^{JY}$ respectively for $t\in\mathbb{R}$.  As the next claim shows, the image of a holomorphic sphere under the flow of $Y$ and $JY$ is determined by the image of one point on the sphere.

\begin{claim}\label{bollox}
For $x\in M$, let $L_{x}\in[C]$ be a holomorphic sphere in $M$ with $x\in L_{x}$. Then $\phi_{t}^{Y}(L_{x})$
(respectively $\phi_{t}^{JY}(L_{x})$) is the unique holomorphic sphere in $M$ lying in $[C]$ passing through $\phi_{t}^{Y}(x)$ (resp.~$\phi_{t}^{JY}(L_{x})$).
\end{claim}

\begin{proof}
It is clear that the image of $L_{x}$ under the flow of $Y$ and $JY$ is a holomorphic sphere in $M$ lying in $[C]$ passing through
$\phi_{t}^{Y}(x)$ and $\phi_{t}^{JY}(x)$, respectively. No other holomorphic sphere in $[C]$ can pass through these points since $C.C=0$.
\end{proof}

Holomorphic spheres containing a zero of $Y$ are fixed by the flow of $Y$ and $JY$.

\begin{claim}\label{fixxed}
Let $L\in[C]$ be a holomorphic sphere in $M$. Then the following are equivalent.
\begin{enumerate}
  \item $Y$ vanishes at some point $x\in L$.
  \item $\phi_{t}^{Y}(L)=\phi_{t}^{JY}(L)=L$ for all $t$.
  \item $Y$ is tangent to $L$.
\end{enumerate}
\end{claim}

\begin{proof}
\begin{description}
  \item[(i)$\implies$(ii)] By Claim \ref{bollox}, $\phi_{t}^{Y}(L)$ is the unique holomorphic curve in $[C]$ passing through $\phi_{t}^{Y}(x)$. Since
$\phi_{t}^{Y}(x)=x$, we deduce that $\phi_{t}^{Y}(L)=L$.
  \item[(ii)$\implies$(iii)] This is clear.
  \item[(iii)$\implies$(i)] A holomorphic vector field tangent to $\mathbb{P}^{1}$ has at least one zero.
\end{description}
\end{proof}

If $Y$ is nowhere vanishing along the holomorphic sphere, then the image sphere is disjoint from the original.

\begin{claim}\label{nointersect}
Let $L\in[C]$ be a holomorphic sphere in $M$. Then $Y$ is nowhere vanishing on $L$ if and only if there exists $\varepsilon>0$ such that
$\phi_{t}^{Y}(L)\cap L=\emptyset$ and $\phi_{t}^{JY}(L)\cap L=\emptyset$ for all $0<|t|<\varepsilon$.
\end{claim}

\begin{proof}
If $Y$ is nowhere vanishing on $L$, then $Y$ cannot be tangent to $L$ for otherwise it would have a zero along $L$. Thus, $Y$ has a normal component at
some point $x\in L$ so that $\phi_{t}^{Y}(x)\notin L$ for $0<|t|<\varepsilon$ for some $\varepsilon>0$.
By Claim \ref{bollox}, for such values of $t$, $\phi_{t}^{Y}(L)$ will be the unique holomorphic sphere in $[C]$ passing through $\phi_{t}^{Y}(x)$, hence will be
disjoint from $L$. A similar argument applies to $JY$. The converse follows from the implication (i)$\implies$(ii) of Claim \ref{fixxed}.
\end{proof}

As the zero set of $Y$ is compact, by choosing $i$ sufficiently large, we can guarantee that $Y$ is nowhere vanishing along $C$ so that Claim \ref{nointersect} applies with $L=C$.
Henceforth working with the $\mathbb{C}^{*}$-action generated by $Y$ and $JY$, in light of Claim \ref{fixxed}, we then see that $Y$ and $JY$ will be nowhere vanishing on the $\mathbb{C}^{*}$-orbit of $C$ in $M$. Define $\operatorname{Orb}_{\mathbb{C}^{*}}(C):=\{g\cdot C\:|\:g\in\mathbb{C}^{*}\}\subseteq M$.

\begin{claim}\label{free}
There exists a finite cyclic group $\mathbb{Z}_{k}\subset S^{1}\subset\mathbb{C}^{*}$, $k\geq1$, such that the induced action of $\mathbb{C}^{*}/\mathbb{Z}_{k}$ on
$\operatorname{Orb}_{\mathbb{C}^{*}}(C)$ is free.
\end{claim}

\begin{proof}
Claim \ref{bollox} implies that the $\mathbb{C}^{*}$-action on this orbit descends to a (transitive)
$\mathbb{C}^{*}$-action on the holomorphic $\mathbb{P}^{1}$'s in $[C]$ contained in $\operatorname{Orb}_{\mathbb{C}^{*}}(C)$. Claim \ref{nointersect} then implies that this action on the holomorphic $\mathbb{P}^{1}$'s is locally free. Compactness of $S^{1}$ implies that the stabiliser group in $S^{1}\subset\mathbb{C}^{*}$ of $C$ under this action is a finite subgroup of $S^{1}$, hence is a cyclic group of the form $\mathbb{Z}_{k}$ for some $k\geq1$. The induced action of $\mathbb{C}^{*}/\mathbb{Z}_{k}$ on the holomorphic $\mathbb{P}^{1}$'s in $[C]$ contained in
$\operatorname{Orb}_{\mathbb{C}^{*}}(C)$ will therefore be free. Claim \ref{bollox} then tells us that the induced action
of $\mathbb{C}^{*}/\mathbb{Z}_{k}$ on $\operatorname{Orb}_{\mathbb{C}^{*}}(C)$ will be free.
\end{proof}

As $\mathbb{C}^{*}/\mathbb{Z}_{k}\cong\mathbb{C}^{*}$, we may therefore assume without loss of generality that the $\mathbb{C}^{*}$-action generated by $Y$ and $JY$ on $\operatorname{Orb}_{\mathbb{C}^{*}}(C)$ is free. We define a map
\begin{equation}\label{mapp}
\Phi:\mathbb{C}^{*}\times\mathbb{P}^{1}\to M,\qquad (g,\,y)\mapsto g\cdot(v(y)).
\end{equation}
Since the $\mathbb{C}^{*}$-action is free, this defines a biholomorphism onto its image, holomorphic along the $\mathbb{P}^{1}$-direction, and for dimensional reasons demonstrates that for some compact subset $K$ of $M$ containing the zero set of $Y$, $M\setminus K$ is biholomorphic to $\mathbb{P}^{1}\times\mathbb{C}^{*}$. Indeed,
recall that the Hamiltonian potential $u_{Y}:M\to\mathbb{R}$ of $Y$ is proper and bounded from below and that the zero set of $Y$ is compact so that the level sets $u^{-1}_{Y}(\{y\})$ of $u_{Y}$ are
compact and, through the gradient flow of $u_{Y}$, diffeomorphic for all $y>R$ for some $R$ sufficiently large and positive. Hence
with $M$ having only one end \cite[Theorem 0.1]{munteanu}, we obtain a decomposition of the unique end of $M$ as $\bigcup_{y\,\in\,(R,\,+\infty)}u_{Y}^{-1}(\{y\})$. In this picture, one can see that the positive gradient flow of $u_{Y}$, that is, the positive flow of $Y$, moves out to infinity along the unique end of $M$ and from \cite[Proposition 2.28]{cds}, we also read that the negative gradient flow of $u_{Y}$, i.e., the negative flow of $Y$, accumulates in the zero set of $Y$,
a non-empty compact analytic subset of $M$. Thus, the image of $\Phi$ is precisely the complement in $M$ of the minimal compact analytic subset of $M$ containing the zero set of $Y$
and so $M$ fibres as a trivial $\mathbb{P}^{1}$-bundle on the complement of this compact analytic subset. Notice that all the $\mathbb{P}^{1}$-fibres of the fibration are homologous to $C$.

Next, being complete and having bounded scalar curvature, $M$ has finite topological type \cite[Theorem 1.2]{fang}, hence $K$ contains only finitely many $(-1)$-curves. These we blow down
to obtain the minimal model $\varpi:M\to M_{\min}$ of $M$ with complex structure we still denote by $J$. As $M$ is simply connected, $M_{\min}$ will also be simply connected.
Furthermore, the $(-1)$-curves in $K$ are necessarily fixed by the $\mathbb{C}^{*}$-action on $M$ induced by the flow of $Y$ and $JY$
and so the $\mathbb{C}^{*}$-action will extend to a $\mathbb{C}^{*}$-action on $M_{\min}$ in such a way that the map $\varpi:M\to M_{\min}$ is equivariant with respect to these two actions.
The holomorphic vector field $Y$ on $M$ therefore descends to a holomorphic vector field $Y$ on $M_{\min}$ with compact zero set, vanishing at at least the points of $M_{\min}$ that are blown-up to obtain $M$. It is also clear that $\Phi$ induces a biholomorphism from $M_{\min}\setminus\varpi(K)$ to $\mathbb{P}^{1}\times\mathbb{C}^{*}$.
We claim that this $\mathbb{P}^{1}$-fibration at infinity extends in a smooth manner to the interior of $M_{\min}$. This we prove via a continuity argument.

To this end, consider the set $$A:=\{x\in\varpi(K)\:|\:\textrm{$x$ is contained in a holomorphic $\mathbb{P}^{1}$ representing $[C]$}\},$$
where we enlarge $K$ if necessary so that a tubular neighbourhood of its boundary is foliated by $\mathbb{P}^{1}$'s representing $[C]$. Then we have:

\begin{claim}\label{continuity}
$A=\varpi(K)$.
\end{claim}

\begin{proof}
First note that $A$ is non-empty and that the openness of $A$ is immediate from Proposition \ref{local1}. As for closedness, let $x_{i}$ be a sequence of points in $A$ with $x_{i}\to x$ for some $x\in A$. Then for each $i$, there exists a $J$-holomorphic curve $u_{i}:\mathbb{P}^{1}\to M$ passing through $x_{i}$ representing $[C]$. Being contained in the same homology class, these curves all have uniformly bounded area. Therefore by the Gromov compactness theorem \cite{gromov}, there exists a subsequence converging to a tree of $k$ holomorphic $\mathbb{P}^{1}$'s with multiplicity in $[C]$. This limit may be written as $[C]=\Sigma_{i=1}^{k}a_{i}[C_{i}],\,a_{i}>0$. Then
$$0=[C].[C]=\Sigma_{i\neq j}a_{i}a_{j}[C_{i}].[C_{j}]+\Sigma_{i}a_{i}^{2}[C_{i}].[C_{i}].$$
Now, from the equation defining a shrinking K\"ahler-Ricci soliton, we know that for any $J$-holomorphic curve $\hat{C}$ in $M$, $-K_{M}.[\hat{C}]>0$
so that $[\hat{C}].[\hat{C}]\geq-1$ by adjunction. As we are working on $M_{\min}$, this implies that $[C_{i}].[C_{i}]\geq0$ and so $k=1$ and accordingly, the limit is a smooth $\mathbb{P}^{1}$ with multiplicity one. This gives closedness and the claim now follows.
\end{proof}
\noindent Hence we conclude that $M_{\min}$ exhibits the global smooth structure of a $\mathbb{P}^{1}$-fibration over a real surface $S$, with each fibre lying in the homology class $[C]$.

We next holomorphically compactify $M_{\min}$ by adjoining a $\mathbb{P}^{1}$ at infinity using the $\Phi$ from \eqref{mapp} to obtain a closed compact real manifold $\overline{M}_{\min}$ that admits the structure of a smooth $S^{2}$-bundle over a closed compact real surface $\overline{S}$ that itself is obtained from $S$ by adding a point at infinity.
By construction, this additional fibre will be preserved by the induced $\mathbb{C}^{*}$-action on $\overline{M}_{\min}$.
As $M_{\min}$ is simply connected, $\overline{M}_{\min}$ will be simply connected by the Seifert-Van Kampen theorem. It then follows from a long exact sequence
\cite[(17.4)]{Bott} that $\overline{S}$ is simply connected, hence is diffeomorphic to $S^{2}$.
Consequently, $\overline{M}_{\min}$ is diffeomorphic to either $S^{2}\times S^{2}$ or to the blowup of $\mathbb{P}^{2}$ at one point, the only two $S^{2}$-bundles over
$S^{2}$ \cite{steeny}. In either case, removing an $S^{2}$-fibre shows that $S$ is diffeomorphic to $\mathbb{R}^{2}$ and that
$M_{\min}$ is diffeomorphic to $S^{2}\times\mathbb{R}^{2}$ with the $S^{2}$-fibres defining $J$-holomorphic
spheres in $M_{\min}$.

Being a compact analytic subvariety of $M_{\min}$, the zero set of $Y$ must comprise a finite union of isolated points and $\mathbb{P}^{1}$-fibres of $M_{\min}$. Now, those fibres containing a zero of $Y$ are fixed by the $\mathbb{C}^{*}$-action induced by $Y$ and $JY$ by Claim \ref{fixxed}. Otherwise, by Claim \ref{nointersect}, the image of a fibre is disjoint from the original. What we deduce therefore is that the $S^{1}$-action defined by the flow of $JY$ on $M_{\min}$ induces an $S^{1}$-action on $S\approx\mathbb{R}^{2}$ with finitely many zeroes, and in turn via $\Phi$ an $S^{1}$-action on $\overline{S}\approx S^{2}$ with finitely many zeroes, one of which is at infinity. Averaging the round metric on $S^{2}$ over this action, we may assume that the $S^{1}$-action is isometric. \cite[Theorem (4)]{koby} then tells us that the $S^{1}$-action on $\overline{S}$ has precisely two zeroes. As one of these zeroes occurs at infinity, we conclude that the $\mathbb{C}^{*}$-action on $M_{\min}$ fixes precisely one $\mathbb{P}^{1}$-fibre. Denote this fibre by $L_{0}$. By
Claim \ref{fixxed}, $Y$ is then tangent to $L_{0}$ and by Claim \ref{nointersect}, the zero set of $Y$ is contained in $L_{0}$. As the flow of $JY$ induces an $S^{1}$-action on $L_{0}$, we again see from \cite{koby} that the zero set of $Y$ comprises the whole of $L_{0}$ (if the $S^{1}$-action is trivial) or precisely two points. As $M$ is obtained from $M_{\min}$
by blowing up finitely many points of $M_{\min}$ at which the vector field $Y$ vanishes, we see that $M$ is obtained from $M_{\min}$ by blowing up finitely many points of
$L_{0}$. Blowing up more than one point would introduce at least one holomorphic sphere in $M$ with self-intersection $(-k)$ for some $k\geq2$. This is not possible because using adjunction,
the restriction of $-K_{M}$ to every holomorphic curve in $M$ must be positive by the shrinking soliton condition.
Hence $\varpi:M\to M_{\min}$ is the identity or the blowup of $M_{\min}$ at one point of $L_{0}$. Set $E:=\varpi^{-1}(L_{0})$. Then $E$ contains the zero set of $Y$ on $M$ and hence also the fixed point set of $T$. But the flow of $JX$, being dense in $T$, implies that this latter set coincides with the zero set of $X$. This completes the proof of the proposition.
\end{proof}

We now consider $\widehat{M}:=\mathbb{C}\times\mathbb{P}^{1}$ endowed with the standard holomorphic action of the real two-dimensional torus $\widehat{T}$ with Lie algebra $\hat{\mathfrak{t}}$
and $\widetilde{M}:=\operatorname{Bl}_{p}(\mathbb{C}\times\mathbb{P}^{1})$, the blowup of $\widehat{M}$ at a fixed point $p$ of the $\widehat{T}$-action on $\widehat{M}$. The torus action on $\widehat{M}$ induces in a natural way the holomorphic action of a real two-dimensional torus $\widetilde{T}$ on $\widetilde{M}$ with Lie algebra $\tilde{\mathfrak{t}}$ such that the blowdown map $\sigma:\widetilde{M}\to\widehat{M}$ is equivariant with respect to the action of $\widetilde{T}$ and $\widehat{T}$.
Recall the real torus $T$ generated by the flow of $JX$ with Lie algebra $\mathfrak{t}$ containing $JX$
acting on $(M,\,J,\,g)$ in a holomorphic isometric fashion with a compact fixed point set introduced at the beginning of this section.
Theorem \ref{mainthm1}(i) will follow from the next proposition, an improvement from the smooth category of the previous proposition to the complex category.

\begin{prop}[Holomorphic classification]\label{holoo}
Let $(M,\,g,\,X)$ be a two-dimensional simply connected complete non-compact shrinking gradient K\"ahler-Ricci soliton with
bounded scalar curvature $\RR_{g}$ with $X=\nabla^{g}f$ for some smooth function
$f:M\rightarrow\mathbb{R}$. Assume that $X$ has an integral curve along which $\RR_{g}\not\to0$.
Then there exists an equivariant biholomorphism $\alpha$ from $(M,\,T)$ to $(\widehat{M},\,\widehat{T})$ or $(\widetilde{M},\,\widetilde{T})$ with respect to which $\alpha_{*}(JX)$ lies in $\hat{\mathfrak{t}}$ or $\tilde{\mathfrak{t}}$, respectively. In particular, in the latter case, $\alpha_{*}(JX)$ is given by \eqref{vfield}.
\end{prop}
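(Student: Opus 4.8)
The plan is to first promote the $\mathbb{P}^{1}$-fibration of $M_{\min}$ furnished by Proposition \ref{smoothclass} to a holomorphic identification $M_{\min}\cong\widehat{M}$, then to account for the possible blowup so as to identify $M$ with $\widehat{M}$ or $\widetilde{M}$, and finally to use the soliton version of Matsushima's theorem together with the weighted volume functional to upgrade the symmetry to the full torus and to pin down $JX$.

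First I would identify the base of the fibration. Recall from the proof of Proposition \ref{smoothclass} that $M_{\min}$ carries a holomorphic $\mathbb{P}^{1}$-fibration over a simply connected non-compact Riemann surface $S$, and that the holomorphic $\mathbb{C}^{*}$-action generated by $Y$ and $JY$ descends to a holomorphic $\mathbb{C}^{*}$-action on $S$ whose only fixed point is the image $s_{0}$ of the distinguished fibre $L_{0}$. By uniformization $S$ is biholomorphic to $\mathbb{C}$ or to the unit disk $\mathbb{D}$; the latter is impossible because the stabiliser of $s_{0}$ in $\operatorname{Aut}(\mathbb{D})$ is compact, whereas the real flow of $Y$ fixes $s_{0}$ and has orbits exiting the unique end of $M$ (cf.\ \cite[Theorem 0.1]{munteanu} and \cite[Proposition 2.28]{cds}), so it is a non-trivial non-compact one-parameter group. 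Hence $S\cong\mathbb{C}$, and since every holomorphic $\mathbb{P}^{1}$-bundle over the contractible Stein manifold $\mathbb{C}$ is trivial (by Grauert's Oka principle), $M_{\min}$ is biholomorphic to $\mathbb{C}\times\mathbb{P}^{1}=\widehat{M}$. Using the map $\Phi$ of \eqref{mapp}, which already identifies $M_{\min}\setminus L_{0}$ with $\mathbb{C}^{*}\times\mathbb{P}^{1}$ in a $\mathbb{C}^{*}$-equivariant manner, I would normalise this biholomorphism so that $L_{0}$ maps to $\{0\}\times\mathbb{P}^{1}$ and the $\mathbb{C}^{*}$-action is intertwined with the standard scaling on the $\mathbb{C}$-factor of $\widehat{M}$.

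By Proposition \ref{smoothclass}, $M$ is either $M_{\min}$ itself, in which case $M\cong\widehat{M}$, or the blowup of $M_{\min}$ at a single point $p\in L_{0}$ belonging to the zero set of $Y$. In the latter case $p$ is a fixed point of the $\mathbb{C}^{*}$-action on $L_{0}\cong\mathbb{P}^{1}$ and therefore one of the two torus-fixed points of $\widehat{M}$ lying on $\{0\}\times\mathbb{P}^{1}$; the induced equivariant biholomorphism then identifies $M$ with $\widetilde{M}=\operatorname{Bl}_{p}(\mathbb{C}\times\mathbb{P}^{1})$. In either case I obtain a biholomorphism $\alpha$ intertwining the $\mathbb{C}^{*}$ generated by $Y$ and $JY$ with a $\mathbb{C}^{*}$-subgroup of the standard $(\mathbb{C}^{*})^{2}$-action on $\widehat{M}$ (resp.\ $\widetilde{M}$). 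Since $\widehat{M}$ and $\widetilde{M}$ are toric with a holomorphic action of a maximal $2$-torus $\widehat{T}$ (resp.\ $\widetilde{T}$), and since $g$ has bounded Ricci curvature by \cite[Theorem 1.3]{wang22} so that $G^{X}_{0}$ is a compact Lie group (\cite[Lemma 5.12]{cds}), I would now invoke the soliton version of Matsushima's theorem established in \cite{cds}: up to conjugation the maximal torus $\widehat{T}$ (resp.\ $\widetilde{T}$) may be taken to act isometrically, i.e.\ to lie in $G^{X}_{0}$. Composing $\alpha$ with a suitable automorphism, I may therefore take $T$ to be this maximal torus, so that $\alpha$ is $\widehat{T}$-equivariant (resp.\ $\widetilde{T}$-equivariant), $\alpha^{*}g$ is invariant, and in particular $\alpha_{*}(JX)\in\hat{\mathfrak t}$ (resp.\ $\tilde{\mathfrak t}$).

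It then remains to determine $\alpha_{*}(JX)$ explicitly. Now $(\widehat{M},\alpha_{*}g,\alpha_{*}X)$ (resp.\ its analogue on $\widetilde{M}$) is a toric shrinking gradient K\"ahler-Ricci soliton, so by Theorem \ref{thmB13} the vector field $\alpha_{*}(JX)$ is the unique critical point of the weighted volume functional $F$ on $\Lambda$. By Lemmas \ref{one} and \ref{two}, both $F$ and $\Lambda$ depend only on the polytope $P_{-K_{M}}$ determined by the toric structure, and this critical point was computed in Example \ref{vf-id2} in the case of $\widehat{M}$ and in Example \ref{vf-id} in the case of $\widetilde{M}$. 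Hence $\alpha_{*}(JX)$ is determined, and when $M\cong\widetilde{M}$ it is the element of $\tilde{\mathfrak t}$ corresponding under \eqref{eqnY4} to the soliton vector field \eqref{vfield}. I expect the main obstacle to be the passage in the previous paragraph from the a priori $\mathbb{C}^{*}$-symmetry to the full $2$-torus symmetry of the soliton metric: this is precisely the point where the bounded scalar (hence Ricci) curvature hypothesis is indispensable, since it underlies the compactness of $G^{X}_{0}$ and the reductivity needed for the soliton form of Matsushima's theorem. The identification $S\cong\mathbb{C}$ and the equivariant filling-in of $L_{0}$ in the second paragraph are the other delicate points.
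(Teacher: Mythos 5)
Your endgame (toricity plus Lemmas \ref{one} and \ref{two}, Theorem \ref{thmB13}, and the computations of Examples \ref{vf-id2} and \ref{vf-id}) is exactly how the paper pins down $\alpha_{*}(JX)$, and your identification $S\cong\mathbb{C}$ via uniformization and Grauert is a reasonable, genuinely different route to the underlying biholomorphism $M_{\min}\cong\widehat{M}$ (modulo the fixable point that Proposition \ref{smoothclass} only produces a \emph{smooth} $\mathbb{P}^{1}$-fibration, so you must first argue, e.g.\ via Kodaira's theory of complete families for curves with normal bundle $\mathcal{O}$, that the fibration and the induced action on $S$ are holomorphic before Fischer--Grauert and the Oka principle apply). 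But the core of the proposition is \emph{equivariance}, and there your argument has a genuine gap in two places. First, "normalise this biholomorphism so that \dots the $\mathbb{C}^{*}$-action is intertwined with the standard scaling" conceals the real difficulty: the map $\Phi$ of \eqref{mapp} trivialises the bundle equivariantly only over $\mathbb{C}^{*}$, and it differs from Grauert's global trivialisation by a holomorphic transition map $\mathbb{C}^{*}\to PGL(2,\mathbb{C})$ that can be arbitrarily wild at the origin; a priori the $\mathbb{C}^{*}$-action in the global trivialisation need not be the product action, and when $\dim_{\mathbb{R}}T=2$ nothing in your construction intertwines the second circle at all. This is precisely what the paper's proof spends its length on: the Morse--Bott analysis of $M_{0}(X)$ (Claims \ref{ireland}, \ref{spring}, \ref{winter}, \ref{summer}), the direct construction of the equivariant biholomorphism when $\dim_{\mathbb{R}}T=2$, the auxiliary commuting vector field $JV$ that enlarges a one-dimensional $T$ to a two-torus when the fixed points are isolated, and the imported arguments of \cite[Claims 4.15--4.17]{charlie} when a whole $\mathbb{P}^{1}$ is fixed.

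Second, your appeal to Matsushima/Iwasawa is circular as stated. The conjugation in the paper (which in any case belongs to the completion of the proof of Theorem \ref{mainthm1}(ii), not to Proposition \ref{holoo}) takes place inside $\operatorname{Aut}^{X'}_{0}(M)$, which is a finite-dimensional Lie group by \cite[Proposition 5.8]{cds}; but to know that the standard torus $\widehat{T}$ (resp.\ $\widetilde{T}$) lies in $\operatorname{Aut}^{X'}_{0}(M)$ one needs $[\hat{\mathfrak{t}},\,X']=0$, i.e.\ essentially that $\alpha_{*}(JX)\in\hat{\mathfrak{t}}$ already --- the very conclusion you are after. You cannot instead conjugate inside the full automorphism group: $\operatorname{Aut}_{0}(\mathbb{C}\times\mathbb{P}^{1})$ consists of maps $(z,w)\mapsto(az+b,\,A(z)\cdot w)$ with $A:\mathbb{C}\to PGL(2,\mathbb{C})$ an arbitrary holomorphic map, hence is infinite-dimensional (and likewise for $\widetilde{M}$), so Iwasawa's conjugacy theorem for maximal compact subgroups is unavailable there, and \cite[Corollary 5.13]{cds} asserts maximality of $G^{X'}_{0}$ in $\operatorname{Aut}^{X'}_{0}(M)$ only --- it does not say that an arbitrary compact torus of biholomorphisms can be conjugated into the isometry group. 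To close the argument you must establish the $T$-equivariance of $\alpha$ directly, as the paper does, before any conjugation; the conjugation then upgrades the metric to $\mathbb{T}$-invariance, which in turn licenses your final weighted-volume computation.
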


\begin{proof}
We have already established in Proposition \ref{smoothclass} that $M$ is diffeomorphic to either $\mathbb{C}\times\mathbb{P}^{1}$ or to $\operatorname{Bl}_{p}(\mathbb{C}\times\mathbb{P}^{1})$ and that there is a map $\varpi:M\to M_{\min}$ to $M_{\min}$, a manifold diffeomorphic to $S^{2}\times\mathbb{R}^{2}$ with the $S^{2}$-fibres defining holomorphic spheres in $M_{\min}$, with $\varpi$ the identity or the blowup of $M_{\min}$ at
a point $p$ of a $\mathbb{P}^{1}$-fibre $L_{0}$ of $M_{\min}$, as appropriate. Let $\hat{\pi}:M_{\min}\to\mathbb{R}^{2}$ denote the projection map. Then we obtain a map $\pi:=\hat{\pi}\circ\varpi:M\to\mathbb{R}^{2}$. Without loss of generality, we may assume that $L_{0}=\hat{\pi}^{-1}(\{0\})$.
Proposition \ref{smoothclass} then tells us that $M_{0}(X)$, that is, the zero set of $X$, is a compact analytic subset of
$E:=\pi^{-1}(\{0\})$, where $E$ is equal to $L_{0}$ if $\varpi$ is the identity, or to two holomorphic $\mathbb{P}^{1}$'s meeting transversely, each of self-intersection $(-1)$, otherwise. In this latter case, we denote these curves by $L_{1}$ and $L_{2}$. In both cases, the forward flow of $-X$ accumulates in $E$ by \cite[Proposition 2.28]{cds} and the action of $T$ preserves $E$. Indeed, this last point follows from Claims \ref{bollox} and \ref{fixxed} (which also hold with $Y$ replaced by $X$) if $\varpi$ is the identity map, and from the following claim otherwise.

\begin{claim}\label{ireland}
$X$ is tangent to any $(-1)$-curve in $M$.
\end{claim}

\begin{proof}
A neighbourhood of any $(-1)$-curve in $M$ is biholomorphic to a neighbourhood of the zero section of $\mathcal{O}_{\mathbb{P}^{1}}(-1)$. Along this zero section, we have a canonical
holomorphic splitting of $TM$ as $T\mathbb{P}^{1}\oplus\mathcal{O}_{\mathbb{P}^{1}}(-1)$. The normal component of $X$ in this splitting must therefore vanish which yields the claim.
\end{proof}

This claim in fact implies that when $\varpi$ is the blowup map, $L_{1}$ and $L_{2}$ are both preserved by the action of $T$. Thus, no matter what $\varpi$ may be,
the action of $T$ on $M$ will induce an action of $T$ on $M_{\min}$. In the particular case when $\varpi$ is the blowup map,
the point of intersection of $L_{1}$ and $L_{2}$ will be fixed by $T$. We denote this point by $x$ so that $x\in M_{0}(X)\cap E\neq\emptyset$.

Suppose first that $\varpi$ is the blowup map so that
$M$ is diffeomorphic to $\operatorname{Bl}_{p}(\mathbb{C}\times\mathbb{P}^{1})$. We begin by noting:
\begin{claim}\label{spring}
If $X|_{L_{i}}$ is non-trivial for $i=1$ or $i=2$, then $|M_{0}(X)\cap L_{i}|=2$.
\end{claim}

\begin{proof}
As $X|_{L_{i}}$ is non-trivial, the restriction of $f$ to $L_{i}$ is non-constant, hence attains a global maximum and a global minimum on $L_{i}$. At these points, $d(f|_{L_{i}})=0$. Then as $X$ is tangent to $L_{i}$ by Claim \ref{ireland}, we actually have that $df=0$ at these points so that $X|_{L_{i}}$ has at least two zeroes. But $X|_{L_{i}}$ is a holomorphic vector field on $\mathbb{P}^{1}$, hence has at most two zeroes.
\end{proof}

Now, by \cite[Proof of Lemma 1]{frankel}, $f$ is a Morse-Bott function on $M$. The critical submanifolds of $f$
are precisely the connected components of $M_{0}(X)$. Since $M$ is K\"ahler,
the Morse indices (i.e., the number of negative eigenvalues of $\operatorname{Hess}(f)$) of the critical submanifolds are all even \cite{frankel}.
Write $$M_{0}(X)=M^{(0)}\cup M^{(2)}\cup M^{(4)},$$
where $M^{(j)}$ denotes the disjoint union of the critical submanifolds of $M_{0}(X)$ of index $j$.
We already know from Proposition \ref{smoothclass} that $M_{0}(X)\subseteq E$,
and from \cite[Claim 2.30]{cds} we know that $M^{(0)}$ is a non-empty, connected, compact complex submanifold of $M$, hence
is equal to either $E$, $L_{1}$, $L_{2}$, or an isolated point of $E$. We analyse the structure of $M_{0}(X)$ in each of these cases separately, beginning with:

\begin{claim}\label{winter}
If $M^{(0)}$ comprises a single point, then $|M_{0}(X)\cap L_{i}|=2$ for $i=1,\,2$.
\end{claim}

\begin{proof}
Recall that $\{x\}=L_{1}\cap L_{2}$ and assume that $M^{(0)}=\{y\}$ for some point $y\in E$.
If $x=y$, then $y$ is an isolated zero of both $X|_{L_{1}}$ and $X|_{L_{2}}$. The result then follows by applying Claim \ref{spring}
to both $L_{1}$ and $L_{2}$. If $x\neq y$, then assume without loss of generality that $y\in L_{1}$.
Then the zero set of $X|_{L_{1}}$ comprises at least two points, namely $x$ and $y$. Claim \ref{spring} implies that in fact
$|M_{0}(X)\cap L_{1}|=\{x,\,y\}$. Considering $X|_{L_{2}}$ next, the zero set of this vector field contains $x$.
If $X|_{L_{2}}$ is non-trivial, then the result follows from Claim \ref{spring}. Otherwise assume that
$X|_{L_{2}}\equiv0$. Then as $M^{(0)}$ is connected, we have that $x\in M^{(2)}\cup M^{(4)}$. Now, if $x\in M^{(4)}$, then by
\cite[Proposition 6]{Bry-Kah-Sol} there exist local holomorphic coordinates $(z_{1},\,z_{2})$ centred at $x$ such that
the holomorphic vector field $X^{1,\,0}:=\frac{1}{2}(X-iJX)$ takes the form
\begin{equation*}
X^{1,\,0}=a_{1}z_{1}\frac{\partial}{\partial z_{1}}+a_{2}z_{2}\frac{\partial}{\partial z_{2}}
\end{equation*}
for some $a_{1},\,a_{2}\in\mathbb{R}_{<\,0}$. This implies in particular that $x$ is an isolated zero of $X$, contradicting the fact that
$X|_{L_{2}}\equiv0$. Hence necessarily $x\in M^{(2)}$ so that $L_{2}\subseteq M^{(2)}$.

For each $z\in L_{2}$, $f$ is decreasing along the forward flow of $-X$ emanating from $z$, hence this flow accumulates at $y\in L_{1}$
by \cite[Proposition 2.8]{cds}. As in \cite[p.3332]{chen-soliton}, we can use the forward flow of $-X$ to construct a holomorphic sphere $R_{z}:\mathbb{P}^{1}\to M$ in $M$ with $R_{z}(0)=z$ and $R_{z}(\infty)=y$. Assume that $x\neq z$ and call the resulting holomorphic sphere $D$. Then for $i\neq j$, $L_{j}=\varpi^{*}C-L_{i}$, $D.L_{i}>0$, and $D.L_{j}>0$, which leads to the conclusion
that $D.\varpi^{*}C>0$. This is a contradiction and the claim now follows.
\end{proof}

Next, we have:
\begin{claim}\label{summer}
If $M^{(0)}=L_{i}$, then $|M_{0}(X)\cap L_{j}|=2$ for $j\neq i$.
\end{claim}

\begin{proof}
In this case, $X|_{L_{j}}$ is non-trivial as $M^{(0)}\neq E$ and is connected. The result then follows from an application of Claim \ref{spring} to
$L_{j}$.
\end{proof}

Thus, the induced action of the real torus $T$ on $M_{\min}$ will fix either two points on $L_{0}$ or the whole of $L_{0}$ and the forward flow lines of the vector field $-X$ induced on $M_{\min}$ accumulate in $L_{0}$. If $\dim_{\mathbb{R}}T=2$, then by identifying a point off of $L_{0}$ and $\{0\}\times\mathbb{P}^{1}$ in $M$ and $\widehat{M}$ respectively and using the actions, one can construct an equivariant biholomorphism
$\alpha:(M_{\min},\,T)\to(\widehat{M},\,\widehat{T})$.

If $|M^{(0)}|=1$ and $\dim_{\mathbb{R}}T=1$, then the flow of $X$ and $JX$ on $M_{\min}$ induces a $\mathbb{C}^{*}$-action on $M_{\min}$
which by Claim \ref{free} we may assume to be free on $M_{\min}\setminus L_{0}$. In addition,
Claim \ref{winter} implies that the fixed point set of $T$ will comprise precisely two isolated points in $L_{0}$, $a$ and $b$ say.
As the forward flow lines of $-X$ on $M_{\min}$ accumulate in $L_{0}$, the closure of every orbit of this $\mathbb{C}^{*}$-action on $M_{\min}$ is a copy of $\mathbb{C}$ obtained by adjoining either $a$ or $b$ to the orbit in question. Choose an orbit $O_{a}$ and $O_{b}$ passing through $a$ and $b$, respectively. Then each orbit will intersect every fibre of the $\mathbb{P}^{1}$-foliation of $M_{\min}\setminus L_{0}$ at precisely one point. Indeed, if an orbit intersected a $\mathbb{P}^{1}$-fibre $L$ in $M_{\min}\setminus L_{0}$ at two points $x_{1},\,x_{2}\in L,\,x_{1}\neq x_{2},$ say, then there would exist a $g\in\mathbb{C}^{*},\,g\neq1,$ such that $g\cdot x_{1}=x_{2}$. By Claim \ref{bollox}, we would then have that $g\cdot L=L$. The element $g$ would then define an automorphism of $L\cong\mathbb{P}^{1}$ and would therefore have at least one fixed point. This contradicts the freeness of the $\mathbb{C}^{*}$-action on $M_{\min}\setminus L_{0}$.
Define a global real holomorphic vector field $JV$ on $M_{\min}$ in the following way. Restricted to a $\mathbb{P}^{1}$-fibre $L$, $JV$ will be the unique real holomorphic vector field tangent to $L$ vanishing at $O_{a}\cap L$ and $O_{b}\cap L$
that generates a holomorphic $S^{1}$-action, the direction of the flow of which
relative to the points $O_{a}\cap L$ and $O_{b}\cap L$ will be the same as that on $L_{0}$ relative to $a$ and $b$,
and the time $2\pi$-flow of which is the identity map. The flow of $V$ and $JV$ will generate a $\mathbb{C}^{*}$-action on $M_{\min}$ that commutes with the flow of $X$ and $JX$. To see this last point, it suffices to verify that $[X,\,V]=0$ on $M_{\min}\setminus L_{0}$. To this end, we set up an equivariant biholomorphism $\mathbb{C}^{*}\times\mathbb{P}^{1}\to M_{\min}\setminus L_{0}$ in the following way.
Pick an arbitrary fibre $u:\mathbb{P}^{1}\to L\subset M_{\min}\setminus L_{0}$. By pre-composing with a suitable M\"obius transformation, we can assume that $u(0)=O_{a}\cap L$ and
$u(\infty)=O_{b}\cap L$. As in \eqref{mapp}, we extend $u$ to a biholomorphism $\Psi:\mathbb{C}^{*}\times\mathbb{P}^{1}\to M_{\min}\setminus L_{0}$, equivariant
with respect to the standard $\mathbb{C}^{*}$-action on the first component of the domain and the $\mathbb{C}^{*}$-action generated by $X$ and $JX$ on the range.
By construction, $\Psi^{-1}$ has the property that it pushes forward $\frac{1}{2}(V+iJV)$ to a global holomorphic vector field on $\mathbb{C}^{*}\times\mathbb{P}^{1}$ tangent to the $\mathbb{P}^{1}$-fibres and vanishing along $(\mathbb{C}^{*}\times\{0\})\cup(\mathbb{C}^{*}\times\{\infty\})$. In particular, this holomorphic vector field generates another $\mathbb{C}^{*}$-action
on $\mathbb{C}^{*}\times\mathbb{P}^{1}$ and the map $\Psi$ will also be $\mathbb{C}^{*}$-equivariant with respect to this action on the domain and that generated by
$\frac{1}{2}(V+iJV)$ on the range. Observing that the two $\mathbb{C}^{*}$-actions on the domain of $\Psi$ commute, the desired vanishing of $[X,\,V]$ is now clear.
The result of this is that $T$ is contained in a real two-dimensional torus acting holomorphically on $M_{\min}$ and hence we reduce to the previous case.

If $M^{(0)}=L_{i}$ for some $i=1,\,2,$ then by Claim \ref{summer}, the fixed point set of $\widehat{T}$ will comprise either two isolated points, a case that we have already dealt with (independent of the dimension of $T$), or a $\mathbb{P}^{1}$ given by $\varpi(M^{(0)})$. In this latter case, the argument of the proof of \cite[Claim 4.15]{charlie}
tells us that $\dim_{\mathbb{R}}T=1$ (this argument is local). The argument of \cite[Claims 4.16 and 4.17]{charlie} then yields an equivariant biholomorphism $\alpha:(M_{\min},\,T)\to(\widehat{M},\,\widehat{T})$.

Finally, if $M^{(0)}=E$, then the fixed point set of $\widehat{T}$ will comprise a $\mathbb{P}^{1}$ given by $\varpi(M^{(0)})$, that is, an instance of the previous case.
This covers all possibilities for $\varpi$ equal to the blowdown map and so we have an equivariant biholomorphism
$\alpha:(M_{\min},\,T)\to(\widehat{M},\,\widehat{T})$. Being equivariant then allows us to lift this to an equivariant biholomorphism $\alpha:(M,\,T)\to(\widetilde{M},\,\widetilde{T})$.

Suppose now that $\varpi$ is the identity map so that $M$ is diffeomorphic to $\mathbb{C}\times\mathbb{P}^{1}$ and $M=M_{\min}$. Then $E=\mathbb{P}^{1}$ and is preserved by the action of
$T$. As $M_{0}(X)\subseteq E$, we must therefore have that the fixed point set of $T$ comprises either two points in $E$ or the whole of $E$. Being connected, it follows that
$|M^{(0)}|=1$ in the former case and that $M^{(0)}=E$ in the latter case. All possibilities thereafter have then been dealt with above
and we conclude that there is an equivariant biholomorphism $\alpha:(M,\,T)\to(\widehat{M},\,\widehat{T})$.

In both cases, the fact that $JX$ generates $T$ and $\alpha$ is equivariant implies that $\alpha_{*}(JX)\in\tilde{\mathfrak{t}}$ or $\alpha_{*}(JX)\in\hat{\mathfrak{t}}$, as appropriate.
\end{proof}

We now conclude the proof of Theorem \ref{mainthm1}.

\subsection*{Completion of the proof of Theorem \ref{mainthm1}}
Given $(M,\,g,\,X)$ as in the statement of Theorem \ref{mainthm1}, let $M_{\textrm{univ}}$ denote the universal cover
of $M$. Then since $M$ has finite fundamental group \cite[Theorem 1.1]{wyliee}, we can write
$M=M_{\textrm{univ}}/\Gamma$, where $\Gamma$ is a finite group of biholomorphisms of $M_{\textrm{univ}}$ acting freely.
Lifting the shrinking soliton structure to $M_{\textrm{univ}}$, we read from Proposition \ref{holoo} that $M_{\textrm{univ}}$
is biholomorphic to either $\widehat{M}$ or $\widetilde{M}$. Thus, item (i) of Theorem \ref{mainthm1} will follow from Proposition \ref{holoo}
if we can establish that $\operatorname{Fix}(\Gamma)\neq\emptyset$. This we prove in the next claim.

\begin{claim}
Every element of a finite group $\Gamma$ of biholomorphisms acting on $\widehat{M}$ or $\widetilde{M}$ has a fixed point.
\end{claim}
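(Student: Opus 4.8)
The plan is to reduce everything to the single elementary fact that a biholomorphism of $\mathbb{P}^{1}$, being a M\"obius transformation, always has a fixed point, exploiting the $\mathbb{P}^{1}$-fibration structure of $\widehat{M}=\mathbb{C}\times\mathbb{P}^{1}$ and the negative curves of $\widetilde{M}=\operatorname{Bl}_{p}(\mathbb{C}\times\mathbb{P}^{1})$. Fix $\gamma\in\Gamma$; since $\Gamma$ is finite, $\gamma$ has finite order.

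For $\widehat{M}$, first I would observe that the compact irreducible curves in $\mathbb{C}\times\mathbb{P}^{1}$ are exactly the fibres $\{z\}\times\mathbb{P}^{1}$: any such curve projects to a compact connected analytic subset of $\mathbb{C}$, hence to a single point, and so coincides with a fibre. Since $\gamma$ maps compact irreducible curves to compact irreducible curves, it permutes these fibres, so $\operatorname{pr}_{1}\circ\gamma$ is constant on fibres and descends to a holomorphic self-map $\bar{\gamma}$ of the base $\mathbb{C}$; applying the same reasoning to $\gamma^{-1}$ shows $\bar{\gamma}$ is a biholomorphism of $\mathbb{C}$, i.e.\ an affine map $z\mapsto az+b$. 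A finite-order affine map has a fixed point $z_{0}\in\mathbb{C}$ (the only finite-order case without one, a nontrivial translation, has infinite order). Then $\gamma$ preserves the fibre $\{z_{0}\}\times\mathbb{P}^{1}$, and its restriction there is a M\"obius transformation, yielding a fixed point of $\gamma$.

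For $\widetilde{M}$, I would instead use its two $(-1)$-curves. Let $\sigma\colon\widetilde{M}\to\widehat{M}$ be the blowdown, $E=\sigma^{-1}(p)$, and $\tilde{L}_{0}$ the strict transform of the fibre $L_{0}$ through $p$; by Proposition \ref{smoothclass} these are embedded spheres of self-intersection $-1$ meeting transversally at a single point $x$ (the curves $L_{1},L_{2}$). Analysing compact curves through $\sigma$ as above shows that $E$ and $\tilde{L}_{0}$ are the \emph{only} curves of negative self-intersection: a compact irreducible curve maps under $\sigma$ either to $p$, forcing it to be $E$, or onto a fibre, whose strict transform has self-intersection $0$ unless that fibre is $L_{0}$. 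Because a biholomorphism preserves irreducibility and self-intersection numbers, $\gamma$ permutes the two-element set $\{E,\tilde{L}_{0}\}$. If $\gamma(E)=E$, then $\gamma|_{E}$ is a M\"obius transformation of $E\cong\mathbb{P}^{1}$ and has a fixed point; if instead $\gamma$ interchanges $E$ and $\tilde{L}_{0}$, then $\gamma(E\cap\tilde{L}_{0})=\gamma(E)\cap\gamma(\tilde{L}_{0})=\tilde{L}_{0}\cap E=E\cap\tilde{L}_{0}$, so $\gamma$ fixes the node $x=E\cap\tilde{L}_{0}$. In all cases $\gamma$ has a fixed point.

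No substantial obstacle is expected: the argument is elementary once the fibration and negative-curve structure are in hand. The only points demanding care are the verification that $E$ and $\tilde{L}_{0}$ are the sole negative curves---so that $\gamma$ genuinely permutes a two-element set and the node argument applies---and, in the $\widehat{M}$ case, the remark that finite order is precisely what excludes the fixed-point-free affine maps of the base $\mathbb{C}$.
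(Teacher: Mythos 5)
Your proof is correct and follows essentially the same route as the paper: for $\widetilde{M}$ it uses invariance of the two $(-1)$-curves to pin down the node, and for $\widehat{M}$ it descends $\gamma$ through the $\mathbb{P}^{1}$-fibration to an automorphism of $\mathbb{C}$, locates a fixed fibre, and concludes with the fact that every M\"obius transformation of $\mathbb{P}^{1}$ has a fixed point. The only difference is one of detail, not of method: you verify explicitly what the paper leaves implicit (that the fibres are the only compact curves, that $E$ and $\tilde{L}_{0}$ are the only negative curves, and the elementwise finite-order affine analysis in place of the paper's ``finite automorphism groups of $\mathbb{C}$ are rotational'' statement).
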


\begin{proof}
Any biholomorphism of $\widetilde{M}$ must preserve the two $(-1)$-curves, hence it must fix their point of intersection $x$.

As for $\widehat{M}$, any automorphism $\gamma\in\Gamma$ sends a $\mathbb{P}^{1}$-fibre to a $\mathbb{P}^{1}$-fibre,
hence $\gamma$ induces an automorphism of the $\mathbb{C}$-factor of $\widehat{M}$. Every finite automorphism group of
$\mathbb{C}$ is a rotational group. In particular, the origin is fixed by the action, and so there exists a $\mathbb{P}^{1}$-fibre
of $\widehat{M}$ fixed by $\Gamma$. Every M\"obius transformation has a fixed point. This observation completes the proof of the claim.
\end{proof}

The biholomorphism $\alpha:M\to M$ given by Proposition \ref{holoo} has the property that $\alpha_{*}(JX)$ lies in
$\operatorname{Lie}(\mathbb{T})$, the Lie algebra of the real torus $\mathbb{T}$ from Theorem \ref{mainthm1}(ii).
Let $X':=\alpha_{*}(X)$, $g':=(\alpha^{-1})^{*}g$, and consider the complete shrinking soliton
$(M,\,X',\,g')$. The fact that $\alpha$ is a biholomorphism implies that the background complex structure here is still $J$.
In particular, $JX'\in\operatorname{Lie}(\mathbb{T})$.

Let $G^{X'}_{0}$ denote the connected component of the identity of the holomorphic isometries of $(M,\,J,\,g')$ that
commute with the flow of $X'$. As explained at the beginning of Section \ref{proof1},
the assumption of bounded scalar curvature implies that the closure of the flow of $JX'$ in $G^{X'}_{0}$ yields the holomorphic isometric action of a real torus $T'$ on $(M,\,J,\,g')$ with Lie algebra $\mathfrak{t}'$ containing $JX'$. Without loss of generality, we may assume that $T'$ is maximal in $G^{X'}_{0}$.
\cite[Corollary 5.13]{cds} asserts that $G^{X'}_{0}$ is a maximal compact Lie subgroup of the Lie group $\operatorname{Aut}^{X'}_{0}(M)$, the connected component of the identity of the group of automorphisms of $(M,\,J)$ that commute with the flow of $X'$; cf.~\cite[Proposition 5.8]{cds} as for
why $\operatorname{Aut}^{X'}_{0}(M)$ is a Lie group. Thus, $T'$ is a maximal real torus in $\operatorname{Aut}^{X'}_{0}(M)$.
For each $v\in\operatorname{Lie}(\mathbb{T})$, $JX'\in\operatorname{Lie}(\mathbb{T})$ implies that $[v,\,JX']=0$ so that $[v,\,X']=0$. Hence each element of $\mathbb{T}$ commutes with the flow of $X'$ and so $\mathbb{T}$ itself is a Lie subgroup of $\operatorname{Aut}^{X'}_{0}(M)$. For dimensional reasons $\mathbb{T}$ is maximal in $\operatorname{Aut}^{X'}_{0}(M)$, therefore by Iwasawa's
theorem \cite{iwasawa} there exists an element $\beta\in\operatorname{Aut}^{X'}_{0}(M)$ such that
$\beta(T')\beta^{-1}=\mathbb{T}$. Since $\beta$ commutes with the flow of $X'$, necessarily $d\beta^{-1}(X')=X'$. Moreover, $\beta^{*}(g')$ is invariant under the action of $\mathbb{T}$.
Let $\gamma:=\alpha^{-1}\circ\beta:M\to M$. Unravelling the definitions, we conclude that
$\gamma^{*}g$ is invariant under the action of $\mathbb{T}$ and $\gamma_{*}^{-1}(JX)=JX'\in\operatorname{Lie}(\mathbb{T})$.
This yields item (ii) of Theorem \ref{mainthm1}. Note that the background complex structure $\gamma^{*}J$ is still equal to $J$ because $\gamma$ is a biholomorphism.

The fact that $\gamma_{*}^{-1}(JX)$ is determined in item (iii) is a result of Proposition \ref{properr} and Theorem \ref{thmB13}, as we know for this latter theorem that the Ricci curvature of $g$, hence that of $\gamma^{*}g$, is bounded. That its flow generates an $S^{1}$-action is clear from the explicit expression of the vector field, given in Examples \ref{vf-id2} and \ref{vf-id} for each respective possibility of $M$. As explained at the beginning of this section, $JX$ is holomorphic and Killing and so the flow of $\gamma_{*}^{-1}(JX)$ is holomorphic and isometric for $(J,\,\gamma^{*}g)$, as claimed in the same item.

Finally, item (iv) follows from the toricity of the soliton and an application of \cite[Theorem A]{charlie}.

\section{Proof of Theorem \ref{thmc}}\label{proof2}

Recall that $(M,\,g(t))$ is a finite time Type I K\"ahler-Ricci flow on $[0,\,T),\,T<+\infty$, defined on a compact K\"ahler surface $M$,
$x\in\Sigma_{I}\subset M$ is a Type I singular point, and $g_{j}(t):=\lambda_{j}g(T+\nolinebreak\frac{t}{\lambda_{j}}),\,t\in[-\lambda_{j}T,\,0),$ for a
sequence $\lambda_{j}\to+\infty$. Let $J$ denote the complex structure of $M$. From \cite{topping, naber}, we know that a subsequence
of $(M,\,g_{j}(t),\,x)$ converges in the smooth pointed Cheeger-Gromov sense \cite[Definition 7.2.1]{topping-book}
to a non-flat complete shrinking gradient Ricci soliton $(N,\,h,\,p)$ with bounded curvature and soliton potential $f$ and associated K\"ahler-Ricci flow
$h(t),\,t\in(-\infty,\,0),$ with $h(-1)=h$. Uniformly bounded curvature implies from Shi's derivative estimates that
the norm of the derivatives of the curvatures of
the metrics $g_{j}(t)$ are uniformly bounded, hence an application of
\cite[Theorem 3.23]{chow} demonstrates that the limit is in fact K\"ahler so that $(N,\,h)$ is a two-dimensional shrinking gradient K\"ahler-Ricci soliton with bounded scalar curvature.
Let $\widetilde{J}$ denote the complex structure of $N$.

First assume that $\lim_{t\to T^{-}}\vol_{g(t)}(M)>0$. Then if $N$ were compact, $N$ would be a del Pezzo surface with $h$ K\"ahler-Einstein
or the shrinking gradient K\"ahler-Ricci soliton on the blowup of $\mathbb{P}^{2}$ at one or two points \cite{soliton}.
After unravelling the scaling factors in the definition of smooth pointed Cheeger-Gromov convergence, this would then imply that
$\lim_{t\to T^{-}}\operatorname{vol}_{g(t)}(M)=0$, a contradiction. Indeed,
let $h(t),\,t\in(-\infty,\,0),$ denote the K\"ahler-Ricci flow associated to $(N,\,h)$.
Then compactness of $N$ implies that for all
$0<\delta<1$, there exists a diffeomorphism $\phi_{k}:N\to M$ such that\linebreak $|\phi_{k}^{*}g_{k}(t)-h(t)|<1$
with derivatives for all $t\in[-1,\,-\delta]$ for $k$ sufficiently large. In particular,
 \begin{equation*}
\begin{split}
\operatorname{vol}_{g\left(T+\frac{t}{\lambda_{k}}\right)}(M)&=\frac{\operatorname{vol}_{\phi_{k}^{*}g_{k}(t)}(N)}{\lambda_{k}^2}
\leq\frac{C}{\lambda_{k}^2}\to0\qquad\textrm{as $k\to\infty$}.\\
\end{split}
\end{equation*}
Thus, $(N,\,h)$ is non-compact and according to Lemma \ref{batman}, the scalar curvature $\RR_{h}$ of $h$ tends to zero along the unique end
of $N$ or there exists an integral curve of the soliton vector field of $(N,\,h)$ along which $\RR_{h}\not\to0$.
If $\RR_{h}\to0$, then we would be done by \cite[Theorem E(3)]{cds}. Therefore to conclude the proof of this direction of the theorem, it suffices to rule out the latter case.

To this end, recall from Theorem \ref{mainthm1} that if
there exists an integral curve of the soliton vector field of $(N,\,h)$ along which $\RR_{h}\not\to0$, then up to pullback by biholomorphism, $(N,\,h)$ is the cylinder
$\mathbb{C}\times\mathbb{P}^{1}$ or a hypothetical shrinking K\"ahler-Ricci soliton on the blowup of $\mathbb{C}\times\mathbb{P}^{1}$
at one point. In either case, choose $R>0$ such that $p\in f^{-1}((-\infty,\,3R])$,
$A:=f^{-1}([2R,\,3R])$ is a non-empty annulus in $N$, and such that any $(-1)$-curves are contained in the
set $f^{-1}((-\infty,\,R])$. This can be done because $f$ is proper (cf.~Theorem \ref{theo-basic-prop-shrink}). Then there exists a compact subset $U\subset N$ containing $A$, $\delta\in(0,\,1)$, and diffeomorphisms $\phi_{k}:U\to M$ with $\phi_{k}(p)=x$ such that $\phi_{k}^{*}g_{k}(t)\to h(t)$ with derivatives on $U$ as $k\to\infty$ for all $t\in[-1,\,-\delta]$.

Next, fix a $\widetilde{J}$-holomorphic sphere $u:\mathbb{P}^{1}\to A$ in $N$ with trivial self-intersection. Then by Corollary \ref{curve}, there exists a sequence of
$\phi_{k}^{*}J$-holomorphic spheres $u_{k}:\mathbb{P}^{1}\to N$ with trivial self-intersection converging in $C^{0}$ to $u$ as $k\to\infty$.
In fact, it follows from standard elliptic bootstrapping arguments that there exists a subsequence, still denoted by $u_{k}$, that converges
uniformly with all derivatives to $u$; cf.~\cite[Proposition 3.3.5 and Section B.4]{dusa2}. Set $C:=u(\mathbb{P}^{1})$, $C_{k}:=u_{k}(\mathbb{P}^{1})$, and let $\tau(t),\,t\in(-\infty,\,0),$ denote the K\"ahler form associated to $h(t)$ with associated Ricci form $\rho_{\tau(t)}$. Then (cf.~\cite[equation (1.13)]{chow})
$$[\rho_{\tau(t)}]=\frac{[\tau(t)]}{-t},\qquad t<0.$$ Consequently, using adjunction, we find that for $t<0$,
\begin{equation*}
\begin{split}
\operatorname{vol}_{h(t)}(C)&=\int_{C}[\tau(t)|_{C}]=-t\int_{C}[\rho_{\tau(t)}|_{C}]=-2\pi t\int_{C}c_{1}(-K_{N}|_{C})=-4\pi t.
\end{split}
\end{equation*}
Since $\phi_{k}^{*}g_{k}(t)\to h(t)$ and $u_{k}\to u$ in $C^{1}$ as $k\to\infty$, we can assert that for $t\in[-1,\,-\delta]$,
$u_{k}^{*}(\vol_{\phi_{k}^{*}g_{k}(t)})\to u^{*}(\operatorname{vol}_{h(t)})$ on $\mathbb{P}^{1}$ as $k\to\infty$,
so that for $t\in[-1,\,-\delta]$,
$$\operatorname{vol}_{\phi_{k}^{*}g_{k}(t)}(C_{k})\to\operatorname{vol}_{h(t)}(C)=-4\pi t\qquad\textrm{as $k\to\infty$}.$$
In other words, for $t\in[-1,\,-\delta]$,
\begin{equation}\label{limitt}
\left|\operatorname{vol}_{\phi_{k}^{*}g_{k}(t)}(C_{k})-(-4\pi t)\right|\to0\qquad\textrm{as $k\to\infty$.}
\end{equation}
On the other hand, let $\omega(t)$ denote the K\"ahler form of $g(t)$ and $\rho_{\omega(t)}$ the corresponding Ricci form. Then
$\frac{\partial\omega(s)}{\partial s}=-\rho_{\omega(s)},\,s\in[0,\,T),$ implies that $[\omega(s)]=[\omega(0)]-s[\rho_{\omega(0)}]$. Using this
and the fact that $\phi_{k}(C_{k})$ is $J$-holomorphic, we compute that
\begin{equation}\label{contradiction}
\begin{split}
\operatorname{vol}_{\phi_{k}^{*}g_{k}(t)}(C_{k})&=\operatorname{vol}_{\lambda_{k}g\left(T+\frac{t}{\lambda_{k}}\right)}(\phi_{k}(C_{k}))
=\lambda_{k}\operatorname{vol}_{g\left(T+\frac{t}{\lambda_{k}}\right)}(\phi_{k}(C_{k}))\\
&=\lambda_{k}\int_{\phi_{k}(C_{k})}\left[\left.\omega\left(T+\frac{t}{\lambda_{k}}\right)\right|_{\phi_{k}(C_{k})}\right]\\
&=\lambda_{k}\int_{\phi_{k}(C_{k})}\left([\omega(0)]-2\pi\left(T+\frac{t}{\lambda_{k}}\right)c_{1}\left(-K_{M}|_{\phi_{k}(C_{k})}\right)\right)\\
&=\lambda_{k}\int_{\phi_{k}(C_{k})}\left([\omega(0)]-2\pi Tc_{1}\left(-K_{M}|_{\phi_{k}(C_{k})}\right)\right)-2\pi t\int_{C_{k}}c_{1}\left(-K_{M}|_{\phi_{k}(C_{k})}\right)\\
&=\lambda_{k}\lim_{s\to T^{-}}\operatorname{vol}_{g(s)}(\phi_{k}(C_{k}))-4\pi t.\\
\end{split}
\end{equation}

To derive a contradiction, we need to show that $\lim_{s\to T^{-}}\operatorname{vol}_{g(s)}(\phi_{k}(C_{k}))>c$ for some positive constant $c$ independent of $k$.
For this, we require:

\begin{claim}\label{nhd}
There exists an open subset $U\subset M$ such that for all $k$, $\phi_{k}(C_{k})\cap (M\setminus U)\neq\emptyset$.
\end{claim}

\begin{proof}[Proof of Claim \ref{nhd}]
Let $U$ denote the union of the maximal open neighbourhood of each $(-1)$-curve in $M$ for which there exists a biholomorphism onto
a neighbourhood of the zero section of the line bundle $\mathcal{O}_{\mathbb{P}^{1}}(-1)\to\mathbb{P}^{1}$. Then for every $k$, $\phi_{k}(C_{k})$ has trivial self-intersection in $U$ and so
cannot be contained in $U$ for any $k$. In other words, $\phi_{k}(C_{k})\cap (M\setminus U)\neq\emptyset$ as claimed.
\end{proof}

Let $V$ be an open subset of $M$ containing every $(-1)$-curve in $M$ with $\overline{V}\subset U$. Since \linebreak $\lim_{s\to T^{-}}\operatorname{vol}_{g(s)}(M)>0$ by assumption, we read from
\cite[Theorem 3.8.3]{Bou-Eys-Gue} (cf.~also \cite[Definition 3.7.9]{Bou-Eys-Gue}) that as $t\to T^{-}$, $g(t)$ contracts only $(-1)$-curves
and converges smoothly locally to a K\"ahler metric $g_{T}$ on the complement of these curves. In particular,
$g(t)\to g_{T}$ smoothly on $M\setminus V$ as $t\to T^{-}$ so that $\operatorname{inj}_{M\setminus U}g(t)\to\operatorname{inj}_{M\setminus U}g_{T}$
and $\operatorname{dist}_{g(t)}(\partial U,\,\partial V)\to\operatorname{dist}_{g_T}(\partial U,\,\partial V)$ as $t\to T^{-}$.
Moreover, by the previous claim, for every $k$ there exists a point
$x_{k}\in\phi_{k}(C_{k})\cap (M\setminus U)$. Let $\varepsilon:=\min\{\operatorname{dist}_{g_T}(\partial U,\,\partial V),\,\operatorname{inj}_{M\setminus U}g_{T}\}$.
Then for $s\in(0,\,T)$ sufficiently close to $T$, $B_{g(s)}\left(x_{k},\,\frac{\varepsilon}{2}\right)$ is contained in $M\setminus V$ and an application of \cite[Comment 1, p.178,
and Proposition 4.3.1(ii)]{sikorav} (see also \cite[Lemma 5.2]{collins100}) yields for such values of $s$ the lower bound
\begin{equation*}\label{volume}
\operatorname{vol}_{g(s)}(\phi_{k}(C_{k}))\geq\operatorname{vol}_{g(s)}\left(B_{g(s)}\left(x_{k},\,\frac{\varepsilon}{2}\right)\cap\phi_{k}(C_{k})\right)\geq \frac{\pi}{4}\left(\frac{\varepsilon}{2}\right)^{2}=\frac{\pi\varepsilon^{2}}{16}.
\end{equation*}
As a consequence, we obtain the following uniform lower bound on $\operatorname{vol}_{g_{T}}(\phi_{k}(C_{k}))$:
\begin{equation}\label{volume}
\operatorname{vol}_{g_{T}}(\phi_{k}(C_{k}))=\lim_{s\to T^{-}}\operatorname{vol}_{g(s)}(\phi_{k}(C_{k}))
\geq\frac{\pi\varepsilon^{2}}{16}.
\end{equation}
To conclude, substitute expression \eqref{contradiction} into \eqref{limitt}, then use the lower bound
\eqref{volume}, and finally let $k\to\infty$. This gives the desired contradiction.

Conversely, suppose that $(N,\,h)$ is the shrinking gradient K\"ahler-Ricci soliton of \cite{FIK} on the blowup of
$\mathbb{C}^{2}$ at the origin and for sake of a contradiction, assume that $\lim_{t\to T^{-}}\vol_{g(t)}(M)=0$. Then
\cite{tosatti10} tells us that $M$ exhibits the structure of a
Fano fibration $\pi:M\to B$ over a base $B$, where in particular $-K_{M}$ is $\pi$-ample. If $B$ is a point, then
$M$ is a del Pezzo surface and \cite{tosatti10} (see also \cite{song23}) further tells us
that the K\"ahler class of the initial metric $g(0)$ is $c_{1}(M)$ and that the diameter
$\operatorname{diam}(M,\,g(t))$ of $(M, g(t))$ tends to zero as $t\to T$. In fact, the work of
Perelman (see \cite{sesum1}) gives us the upper bound $\operatorname{diam}(M,\,g(t))\leq C(T-t)^{\frac{1}{2}}$, which,
for the re-scaled limit $g_{j}(t)$, $t<0$, translates to $\operatorname{diam}(M,\,g_{j}(t))\leq Ct$. This latter bound implies that
$(N,\,h)$ is compact which yields a contradiction. Hence we conclude that $B$ is one-dimensional.
The fact that $-K_{M}$ is $\pi$-ample now tells us that the generic fibre of $\pi:M\to B$ is a
holomorphic $\mathbb{P}^{1}$. Furthermore, by considering the minimal model of $M$ and using the $\pi$-ampleness of $-K_{M}$,
Claim \ref{continuity} applies with $\varpi(K)$ replaced by $M$ and $[C]$ replaced by the homology class of a $\mathbb{P}^{1}$-fibre
of the fibration $\pi:M\to B$. The result is that the singular fibres of $M$ comprise a bubble tree of two $(-1)$-curves.

Now, recalling the setup outlined at the beginning of this section, let $B_{R}:=B_{h}(p,\,R)$ denote the ball of radius $R>0$
centred at $p$ with respect to $h$. Then for
all $R>0$ and $\delta\in(0,\,1)$, there exist diffeomorphisms
$\phi_{k}:\overline{B_{R}}\to M$ with $\phi_{k}(p)=x$ such that $\phi_{k}^{*}g_{k}(t)\to h(t)$ with derivatives on $\overline{B_{R}}$ as $k\to\infty$ for all $t\in[-1,\,-\delta]$.
Let $\widetilde{E}$ denote the exceptional curve in $N$ and choose $R$ sufficiently large, $R=R_{1}$ say, so that
$\widetilde{E}\subset B_{R_{1}}$. Then since $\phi_{k}^{*}J$
converges smoothly locally to $\widetilde{J}$ as $k\to+\infty$, we can,
by Corollary \ref{curve2}, construct a $\phi_{k}^{*}J$-holomorphic curve $E_{k}$ in $B_{R_{1}}$ for each $k$ sufficiently large such that
$E_{k}\to\widetilde{E}$ in $C^{0}$ as $k\to+\infty$.

Recall from \cite{FIK} that the soliton $h=h(-1)$ lives on $\mathbb{C}^{2}$ blown up at a point,
is $U(2)$-invariant, and is asymptotic to a K\"ahler cone metric on $\mathbb{C}^{2}$. As such, for all $\lambda>0$, there exists a compact subset $K_{\lambda}\subset N$ containing $\widetilde{E}$ in the interior such that for all $x\in N\setminus K_{\lambda}$, $\operatorname{inj}_{h}(x)\geq3\lambda$ and $\sup_{B_{h}(x,\,\operatorname{inj}_{h}(x))}|\operatorname{Rm}(h)|_{h}\leq\frac{\pi^{2}}{3\lambda^{2}}$. Set $\lambda=4$, take the corresponding $K_{\lambda}$, and
choose $x\in N\setminus K_{\lambda}$ with $|x|=\hat{R}$ for $\hat{R}>0$ to be chosen later. By the $U(2)$-invariance of $h$, the aforementioned bounds on the injectivity radius and curvature hold
at all points on the sphere $\{|z|=\hat{R}\}$. Choose $\hat{R}$ sufficiently large so that $B_{R_{1}}\subset\{|z|\leq\hat{R}\}$
and such that $\overline{B_{h}(y,\,3\lambda)}\cap\widetilde{E}=\emptyset$ for all $y\in \{|z|=\hat{R}\}$. Next, choose $R>R_{1}$
sufficiently large so that $\{|z|\leq\hat{R}\}\subset B_{R}$ and so that $B_{R}$ contains $\overline{B_{h}(y,\,3\lambda)}$ for every
$y\in\{|z|=\hat{R}\}$. Finally, fix $k$ (depending on $R$) sufficiently large so that $\phi_{k}^{*}g_{k}(-1)$ is sufficiently close to $h$ in derivatives
to guarantee that for all $y\in\{|z|=\hat{R}\}$,
\begin{enumerate}
  \item $\operatorname{inj}_{\phi_{k}^{*}g_{k}(-1)}(y)\geq2\lambda$,
  \item $B_{\phi_{k}^{*}g_{k}(-1)}(y,\,2\lambda)\subset B_{h}(y,\,3\lambda)$,
  \item $\sup_{B_{\phi_{k}^{*}g_{k}(-1)}(y,\,2\lambda)}|\operatorname{Rm}(\phi_{k}^{*}g_{k}(-1))|_{\phi_{k}^{*}g_{k}(-1)}\leq\frac{\pi^{2}}{2\lambda^{2}}$.
\end{enumerate}
As a consequence of (ii), by choosing $k$ larger if necessary, we may assume in addition that for all $y\in\{|z|=\hat{R}\}$,
\begin{enumerate}\setcounter{enumi}{3}
  \item $\overline{B_{\phi_{k}^{*}g_{k}(-1)}(y,\,2\lambda)}\cap E_{k}=\emptyset$ and $\overline{B_{\phi_{k}^{*}g_{k}(-1)}(y,\,2\lambda)}\cap\partial B_{R}=\emptyset$.
\end{enumerate}

Now, $\phi_{k}(E_{k})$ will comprise one of the components of the bubble tree of the two $(-1)$-curves
in some exceptional fibre of the fibration $\pi:M\to B$. Write $E_{(1)}:=\phi_{k}(E_{k})$ and
let $E_{(2)}$ denote the other component. Then $\phi_{k}^{-1}(E_{(2)}\cap\phi_{k}(\overline{B_{R}}))$
defines a real surface in $\overline{B_{R}}$ intersecting $E_{k}$ at precisely one point. Let
$S_{k}\subset\overline{B_{R}}$ denote the unique connected component of this real surface intersecting $E_{k}$.
Then $S_{k}\cap\partial B_{R}\neq\emptyset$,
for otherwise $S_{k}$ would be contained in $B_{R}$
defining a $\phi_{k}^{*}J$-holomorphic $\mathbb{P}^{1}$ which, using Corollary \ref{curve2}, could be perturbed to a $\widetilde{J}$-holomorphic curve in $N$ distinct from $\widetilde{E}$ (after choosing $k$ larger if necessary), thereby leading to a contradiction. In particular, it follows that $S_{k}$ must intersect the hypersurface $\{|z|=\hat{R}\}$ at some point $q$.
Take the unique connected component $S_{k}^{q}\subset B_{\phi_{k}^{*}g_{k}(-1)}(q,\,2\lambda)$ of $S_{k}\cap B_{\phi_{k}^{*}g_{k}(-1)}(q,\,2\lambda)$ passing through $q$.
Clearly, if non-empty, the connected components of the boundary $\partial S_{k}$ are contained in $\partial B_{R}$. Thus, from (iv) above it follows that
$\partial S_{k}^{q}\subset\partial B_{\phi_{k}^{*}g_{k}(-1)}(q,\,2\lambda)$.
Next recalling points (i) and (iii) above, after unravelling the definitions and noting that
$\phi_{k}(S_{k}^{q})$ is $J$-holomorphic,
an application of \cite[Comment 1, p.178, and Proposition 4.3.1(ii)]{sikorav}
(see also \cite[Lemma 5.2]{collins100}) allows us to assert that
$$\vol_{g_{k}(-1)}(\phi_{k}(S^{q}_{k})\cap B_{g_{k}(-1)}(\phi_{k}(q),\,r))\geq\frac{\pi r^{2}}{4}$$
for all $0<r<2\lambda$. Set $r=\lambda=4$. Then we find that
$$\vol_{g_{k}(-1)}(\phi_{k}(S^{q}_{k})\cap B_{g_{k}(-1)}(\phi_{k}(q),\,4))\geq4\pi,$$
which, as $\phi_{k}(S^{q}_{k})\cap B_{g_{k}(-1)}(\phi_{k}(q),\,4)\subseteq E_{(2)}$, leads to
the lower bound $$\vol_{g_{k}(-1)}(E_{(2)})\geq4\pi.$$
On the other hand, using \cite[equation (1.2)]{tosatti10} and computing as in \eqref{contradiction} with $t=-1$, keeping in mind the fact that $(E_{(2)})^{2}=-1$, we derive that
$$\operatorname{vol}_{g_{k}(-1)}(E_{(2)})=2\pi\int_{E_{(2)}}c_{1}(-K_{M}|_{E_{2}})=2\pi.$$
This is a contradiction. We therefore conclude that $\lim_{t\to T^{-}}\vol_{g(t)}(M)>0$, as desired.

\bibliographystyle{amsalpha}

\bibliography{ref2}

\end{document}